\pgfplotsset{compat=newest}
\newcommand{\cmark}{\ding{51}}%
\newcommand{\xmark}{\ding{55}}
\numberwithin{equation}{section}
\newcommand{\thefont}[2]{\fontsize{#1}{#2}\fontshape{n}\selectfont}
\newcommand{\1}{\rlap{\thefont{10pt}{12pt}1}\kern.16em\rlap{\thefont{11pt}{13.2pt}1}\kern.4em}
\def\argmin{\mathop{\rm arg \; min}\limits}%
\theoremstyle{plain}
\newtheorem{proposition}{Proposition}[section]
\newtheorem{corollary}{Corollary}[section]
\newtheorem{theorem}{Theorem}[section]
\newtheorem{lemma}{Lemma}[section]
\newtheorem{assumption}{Assumption}[section]
\theoremstyle{definition}
\newtheorem{definition}{Definition}[section]
\newtheorem{example}{Example}[section]
\theoremstyle{remark}
\newcommand{\R}{{\mathbb R}}
\newcommand{\C}{{\mathbb C}}
\newcommand{\N}{{\mathbb N}}
\newcommand{\Z}{{\mathbb Z}}
\renewcommand{\b}[1]{\boldsymbol{#1}}
\newcommand{\one}{\mathds{1}}
\def\argmin{\mathop{\rm arg \; min}\limits}%
\newcommand{\supp}{\mathop{\mathrm{supp}}}
\title{Approximation of integral operators using convolution-product expansions}
\author{ Paul Escande\footnote{D\'epartement d'Ing\'{e}nierie des Syst\`{e}mes Complexes (DISC), Institut Sup\'{e}rieur de l'A\'{e}ronautique et de l'Espace (ISAE), Toulouse, France, {\tt paul.escande@gmail.com}. This author is pursuing a Ph.D. degree supported by the MODIM project funded by the PRES of Toulouse University and Midi-Pyr\'en\`ees region.}  \hspace{1cm} Pierre Weiss\footnote{Institut des Technologies Avanc\'{e}es en Sciences du Vivant, ITAV-USR3505 and Institut de Math\'{e}matiques de Toulouse, IMT-UMR5219, CNRS and Universit\'{e} de Toulouse, Toulouse, France, {\tt pierre.armand.weiss@gmail.com}}  }
\date{\today}
\begin{document}

\maketitle

\thispagestyle{empty}

\begin{abstract}
We consider a class of linear integral operators with impulse responses varying regularly in time or space. 
These operators appear in a large number of applications ranging from signal/image processing to biology. 
Evaluating their action on functions is a computationally intensive problem necessary for many practical problems.
We analyze a technique called convolution-product expansion: the operator is locally approximated by a convolution, allowing to design fast numerical algorithms based on the fast Fourier transform. 
We design various types of expansions, provide their explicit rates of approximation and their complexity depending on the time varying impulse response smoothness. 
This analysis suggests novel wavelet based implementations of the method with numerous assets such as optimal approximation rates, low complexity and storage requirements as well as adaptivity to the kernels regularity. 
The proposed methods are an alternative to more standard procedures such as panel clustering, cross approximations, wavelet expansions or hierarchical matrices.
\end{abstract}

\noindent \emph{Keywords:} Integral operators, wavelet, spline, structured low rank decomposition, numerical complexity, approximation rate, fast Fourier transform.  \\

\noindent\emph{AMS classifications:} 47A58, 41A46, 41A35, 65D07, 65T60, 65T50, 65R32, 94A12.

\section*{Acknowledgments} 

We began investigating a much narrower version of the problem in a preliminary version of \cite{escande2015sparse}.
An anonymous reviewer however suggested that it would be more interesting to make a general analysis and we therefore discarded the aspects related to convolution-product expansions from \cite{escande2015sparse}. We thank the reviewer for motivating us to initiate this research.
We thank Anh-Tuan Nguyen for giving a feedback and correcting some typos in a preliminary version of the paper.
We also thank Sandrine Anthoine, J\'er\'emie Bigot, Caroline Chaux, J\'er\^ome Fehrenbach, Hans Feichtinger, Clothilde M\'elot and Bruno Torr\'esani for fruitful discussions on related matters while elaborating a common project. In particular, the name product-convolution series is due to H. Feichtinger.

\section{Introduction} \label{sec:intro}

We are interested in the compact representation and fast evaluation of a class of space or time varying linear integral operators with regular variations.
Such operators appear in a large number of applications ranging from wireless communications \cite{roh2004efficient,hrycak2010low} to seismic data analysis \cite{griffiths1977adaptive}, biology \cite{gilad2006fast} and image processing \cite{sawchuk1972space}. 

In all these applications, a key numerical problem is to efficiently evaluate the action of the operator and its adjoint on given functions. This is necessary - for instance - to design fast inverse problems solvers. The main objective of this paper is to analyze the complexity of a set of approximation techniques coined \emph{convolution-product series}. 

We are interested in bounded linear integral operator $H:L^2(\Omega)\to L^2(\Omega)$ defined from a kernel $K$ by:
\begin{equation}
 Hu(x) = \int_{\Omega} K(x,y) u(y) \, dy. \label{eq:integralequation}
\end{equation}
for all $u\in L^2(\Omega)$, where $\Omega \subset \R^d$. 
Evaluating integrals of type \eqref{eq:integralequation} is a major challenge in numerical analysis and many methods have been developed in the literature. Nearly all methods share the same basic principle: decompose the operator kernel as a sum of low rank matrices with a multi-scale structure. This is the case in panel clustering methods \cite{hackbusch1989fast}, hierarchical matrices \cite{borm2003introduction}, cross approximations \cite{oseledets2010tt} or wavelet expansions \cite{beylkin1991fast}. 
The method proposed in this paper basically shares the same idea, except that the time varying impulse response $T$ of the operator is decomposed instead of the kernel $K$. The \emph{time varying impulse response} (TVIR) $T$ of $H$ is defined by:
\begin{equation}
 T(x,y) = K(x+y,y).
\end{equation}
The TVIR representation of $H$ allows formalizing the notion of regularly varying integral operator: the functions $T(x,\cdot)$ should be ``smooth'' for all $x\in \Omega$. 
Intuitively, the smoothness assumption means that two neighboring impulse responses should only differ slightly. 
Under this assumption, it is tempting to approximate $H$ locally by a convolution. 
Two different approaches have been proposed in the literature to achieve this. 
The first one is called \emph{product-convolution expansion of order $m$} and consists of approximating $H$ by an operator $H_m$ of type:
\begin{equation}\label{eq:productconvolution}
 H_m u = \sum_{k=1}^m w_k \odot (h_k \star  u),
\end{equation}
where $\odot$ denotes the standard multiplication for functions and the Hadamard product for vectors, and $\star$ denotes the convolution operator.
The second one, called \emph{convolution-product expansion of order $m$}, is at the core of this paper and consists of using an expansion of type:
\begin{equation}\label{eq:convolutionproduct}
 H_m u = \sum_{k=1}^m h_k \star (w_k \odot u).
\end{equation}
These two types of approximations have been used for a long time in the field of imaging (and to a lesser extent mobile communications and biology) and progressively became more and more refined \cite{trussell1992identification,nagy1998restoring,flicker2005anisoplanatic,gilad2006fast,hrycak2010low,hirsch2010efficient,miraut2012efficient,denis2015fast}. 
In particular, the recent work \cite{denis2015fast} provides a nice overview of existing choices for the functions $h_k$ and $w_k$ as well as new ideas leading to significant improvements. 
Many different names have been used in the literature to describe expansions of type \eqref{eq:productconvolution} and \eqref{eq:convolutionproduct} depending on the communities: sectional methods, overlap-add and overlap-save methods, piecewise convolutions, anisoplanatic convolutions, filter flow, windowed-convolutions,... The term product-convolution comes from the field of mathematics \cite{busby1981product}. We believe that it precisely describes the set of expansions of type \eqref{eq:productconvolution} and therefore chose this naming.
Now that convolution-product expansions have been described, natural questions arise: 
\begin{itemize}
 \item[i)] How to choose the functions $h_k$ and $w_k$?
 \item[ii)] What is the numerical complexity of evaluating products of type $H_m u$?
 \item[iii)] What is the resulting approximation error $\|H_m-H\|$, where $\|\cdot\|$ is a norm over the space of operators?
 \item[iv)] How many operations are needed in order to obtain an approximation $H_m$ such that  $\|H_m-H\|\leq \epsilon$?
\end{itemize}
Elements i) and ii) have been studied thoroughly and improved over the years in the mentioned papers. 
The main questions addressed herein are points iii) and iv). 
To the best of our knowledge, they have been ignored until now. 
They are however necessary in order to evaluate the theoretical performance of different convolution-product expansions and to compare their respective advantages precisely.

The main outcome of this paper is the following: under smoothness assumptions of type $T(x,\cdot) \in H^s(\Omega)$ for all $x\in \Omega$ (the Hilbert space of functions in $L^2(\Omega)$ with $s$ derivatives in $L^2(\Omega)$), most methods proposed in the literature - if implemented correctly - ensure a decay of type $\|H_m-H\|_{HS}= O(m^{-s})$, where $\|\cdot\|_{HS}$ is the Hilbert-Schmidt norm. Moreover, this bound cannot be improved uniformly on the considered smoothness class. By adding a support condition of type $\mathrm{supp}(T(x,\cdot)) \subseteq [-\kappa/2,\kappa/2]$, the bound becomes $\|H_m-H\|_{HS}= O(\sqrt{\kappa} m^{-s})$. More importantly, bounded supports allow reducing the computational burden. After discretization on $n$ time points, we show that the number of operations required to satisfy $\|H_m-H\|_{HS}\leq \epsilon$ vary from $O\left(\kappa^{\frac{1}{2s}}n\log_2(n)\epsilon^{-1/s}\right)$ to $O\left(\kappa^{\frac{2s+1}{2s}} n\log_2(\kappa n)\epsilon^{-1/s}\right)$ depending on the choices of $w_k$ and $h_k$.
We also show that the compressed operator representations of Meyer \cite{meyer1995wavelets} can be used under additional regularity assumptions.

The paper is organized as follows. In section \ref{sec:notation}, we describe the notation and introduce a few standard results of approximation theory. In section \ref{sec:preliminaries}, we precisely describe the class of operators studied in this paper, show how to discretize them and provide the numerical complexity of evaluating convolution-product expansions of type \eqref{eq:convolutionproduct}. 
Sections \ref{sec:linearsubspace} and \ref{sec:adaptive} contain the full approximation analysis for two different kinds of approaches called linear or adaptive methods. Section \ref{sec:additional} contains a summary and a few additional comments.

\section{Notation}\label{sec:notation}

Let $a$ and $b$ denote functions depending on some parameters. The relationship $a\asymp b$ means that $a$ and $b$ are equivalent, i.e. that there exists $0<c_1\leq c_2$ such that $c_1 a \leq b \leq c_2 a$. Constants  appearing  in  inequalities  will  be  denoted  by $C$ and may vary at each occurrence. If a dependence on a parameter exists (e.g. $\epsilon$), we will use the notation $C(\epsilon)$.

In most of the paper, we work on the unit circle $\Omega = \R\backslash \Z$ sometimes identified with the interval $\left[-\frac{1}{2},\frac{1}{2}\right]$. This choice is driven by simplicity of exposition and the results can be extended to bounded domains such as $\Omega=[0,1]^d$ (see section \ref{subsec:extensionhigherdim}).
Let $L^2(\Omega)$ denote the space of square integrable functions on $\Omega$. The Sobolev space $H^s(\Omega)$ is defined as the set of functions in $L^2(\Omega)$ with weak derivatives up to order $s$ in $L^2(\Omega)$. The $k$-th weak derivative of $u\in H^s(\Omega)$ is denoted $u^{(s)}$. The norm and semi-norm of $u\in H^s(\Omega)$ are defined by:
\begin{equation}
 \|u\|_{H^s(\Omega)} = \sum_{k=0}^s \|u^{(k)}\|_{L^2(\Omega)} \quad \textrm{and} \quad |u|_{H^s(\Omega)} = \|u^{(s)}\|_{L^2(\Omega)}. 
\end{equation}
The sequence of functions $(e_k)_{k \in \Z}$ where $e_k:x \mapsto \exp(-2i\pi k x)$ is a Hilbert basis of $L^2(\Omega)$ (see e.g. \cite{katznelson2004introduction}).
\begin{definition}
 Let $u\in L^2(\Omega)$ and $e_k:x \mapsto \exp(-2i\pi k x)$ denote the $k$-th Fourier atom. The Fourier series coefficients $\hat u[k]$ of $u$ are defined for all $k\in \Z$ by:
 \begin{equation}
  \hat u[k] = \int_{\Omega} u(x) e_k(x)\,dx.
 \end{equation}
\end{definition}

The space $H^s(\Omega)$ can be characterized through Fourier series. 
\begin{lemma}[Fourier characterization of Sobolev norms] \label{lem:Characterization_Fourier_Sobolev}
 \begin{equation}
    \|u\|^2_{H^s(\Omega)} \asymp \sum_{k \in \Z} |\hat{u}[k]|^2 (1 + |k|^2)^s.
 \end{equation}
\end{lemma}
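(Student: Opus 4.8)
The plan is to reduce the Sobolev norm to a weighted $\ell^2$ norm of the Fourier coefficients via Parseval's identity, and then to compare the resulting weight against $(1+|k|^2)^s$ through an elementary polynomial estimate. First I would observe that since $\|u\|_{H^s(\Omega)}$ is defined as the finite sum $\sum_{k=0}^s \|u^{(k)}\|_{L^2(\Omega)}$, and since on the $(s+1)$-dimensional vector $\left(\|u^{(0)}\|_{L^2(\Omega)}, \dots, \|u^{(s)}\|_{L^2(\Omega)}\right)$ the $\ell^1$ and $\ell^2$ norms are equivalent (with constants depending only on $s$), it suffices to prove the equivalence for the squared quantity $\sum_{k=0}^s \|u^{(k)}\|_{L^2(\Omega)}^2$. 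This removes the mismatch between the $\ell^1$-type definition of the norm and the $\ell^2$-type right-hand side of the claim.

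Next I would compute the Fourier coefficients of the weak derivatives. With the convention $e_k(x) = \exp(-2\pi i k x)$ fixed in the excerpt, integration by parts (legitimate for weak derivatives because trigonometric polynomials are dense in $H^s(\Omega)$) gives $\widehat{u^{(k)}}[j] = (2\pi i j)^k \hat{u}[j]$, so that $|\widehat{u^{(k)}}[j]|^2 = (2\pi|j|)^{2k}|\hat{u}[j]|^2$. Applying Parseval's identity to each derivative and then exchanging the finite sum over $k$ with the absolutely convergent series over $j$ (all terms being nonnegative) yields
\begin{equation}
\sum_{k=0}^s \|u^{(k)}\|_{L^2(\Omega)}^2 = \sum_{j \in \Z} |\hat{u}[j]|^2 \left( \sum_{k=0}^s (2\pi|j|)^{2k} \right).
\end{equation}

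It then remains to show that the weight $\sum_{k=0}^s (2\pi)^{2k} (|j|^2)^k$ is equivalent to $(1+|j|^2)^s$ uniformly in $j \in \Z$. This comparison is the conceptual heart of the statement, though it is elementary: setting $t = |j|^2 \geq 0$, both $P(t) := \sum_{k=0}^s (2\pi)^{2k} t^k$ and $(1+t)^s$ are degree-$s$ polynomials in $t$ with strictly positive leading coefficients ($(2\pi)^{2s}$ and $1$) and strictly positive values at $t=0$. Hence the ratio $P(t)/(1+t)^s$ is a continuous, strictly positive function on $[0,\infty)$ with a finite positive limit $(2\pi)^{2s}$ as $t\to\infty$, so it is bounded above and below by positive constants. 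Combining this two-sided bound with the displayed identity and the preliminary $\ell^1$--$\ell^2$ reduction delivers the claimed equivalence $\|u\|_{H^s(\Omega)}^2 \asymp \sum_{k\in\Z} |\hat u[k]|^2 (1+|k|^2)^s$.

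The main obstacle is essentially careful bookkeeping rather than any deep difficulty: I would make sure the differentiation-in-Fourier rule is justified for weak (not merely classical) derivatives, and that the interchange of summations is licit, both of which follow from Parseval and the nonnegativity of the series involved. The polynomial comparison, while the genuine mathematical content, reduces to a compactness and continuity argument and is precisely where the dimension-dependent constants concealed in the $\asymp$ symbol originate.
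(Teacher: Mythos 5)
Your proof is correct, and it is the standard textbook argument: Parseval applied to each weak derivative, the differentiation rule $\widehat{u^{(k)}}[j] = (\pm 2\pi i j)^k \hat u[j]$, and the elementary equivalence of the weight $\sum_{k=0}^s (2\pi |j|)^{2k}$ with $(1+|j|^2)^s$. The paper itself offers no proof of this lemma --- it is stated as a classical fact (with the general Fourier-series background delegated to the cited reference of Katznelson) --- so there is nothing to diverge from; your write-up supplies exactly the argument the authors take for granted, including the often-omitted $\ell^1$--$\ell^2$ reduction needed because the paper defines $\|u\|_{H^s(\Omega)}$ as a sum of $L^2$ norms rather than a square root of a sum of squares. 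One small simplification: on the torus the identity $\widehat{u^{(k)}}[j] = (\pm 2\pi i j)^k \hat u[j]$ for weak derivatives follows immediately from the definition of the weak derivative, since each $e_j$ is itself a smooth periodic test function; no density argument is needed there.
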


\begin{definition}[B-spline of order $\alpha$] \label{def:b-spline}
Let $\alpha\in \N$ and $m\geq \alpha+2$ be two integers. The B-spline of order $0$ is defined by 
 \begin{equation}
B_{0,m} = \one_{[-1/(2m),1/(2m)]}.
 \end{equation} 
 The B-spline of order $\alpha \in \N^*$ is defined by recurrence by: 
 \begin{equation}
  B_{\alpha,m} = m B_{0,m} \star B_{\alpha-1,m} = m^{\alpha} \underbrace{B_{0,m} \star \hdots \star B_{0,m}}_{\alpha \textrm{ times}}.  
 \end{equation}
 
The set of cardinal B-splines of order $\alpha$ is denoted $\mathcal{B}_{\alpha,m}$ and defined by:
 \begin{equation}
  \mathcal{B}_{\alpha,m} = \left\{ f(\cdot) = \sum_{k=0}^{m-1} c_k B_{\alpha,m}(\cdot - k/m), \ c_k\in\R, \ 0\leq k \leq m-1  \right\}.
 \end{equation}
\end{definition}

In this work, we use Daubechies wavelet bases on $L^2(\R)$ \cite{daubechies1988orthonormal}. 
We let $\phi$ and $\psi$ denote the scaling and mother wavelets and assume that the mother wavelet $\psi$ has $\alpha$ vanishing moments, i.e. 
\begin{equation}
	\forall 0 \leq m < \alpha, \quad \int_{[0,1]} t^m \psi(t) dt = 0.
\end{equation}
Daubechies wavelets satisfy $\supp(\psi)=[-\alpha+1,\alpha]$, see \cite[Theorem 7.9, p. 294]{mallat1999wavelet}.
Translated and dilated versions of the wavelets are defined, for all $j > 0$ by
\begin{equation}\label{eq:defwavelets}
\psi_{j,l}(x) = 2^{j/2} \psi\left( 2^{j} x - l \right).
\end{equation}
The set of functions $(\psi_{j,l})_{j\in \N, l\in Z}$, is an orthonormal basis of $L^2(\R)$ with the convention $\psi_{0,l}=\phi(x-l)$. 
There are different ways to construct a wavelet basis on the interval $[-1/2,1/2]$ from a wavelet basis on $L^2(\R)$. Here, we use boundary wavelets defined in \cite{cohen1993wavelets}. We refer to \cite{daubechies1992ten,mallat1999wavelet} for more details on the construction of wavelet bases. This yields an orthonormal basis $(\psi_\lambda)_{\lambda\in \Lambda}$ of $L^2(\Omega)$, where 
\begin{equation}
  \Lambda = \left \{ (j,l), j \in \N, 0 \leq l \leq 2^j \right\}.
\end{equation}
We let $I_\lambda = \supp(\psi_\lambda)$ and for $\lambda\in \Lambda$, we use the notation $|\lambda|=j$.

% \begin{lemma}[{Wavelet characterization of Sobolev norms \cite[Theorem 9.4]{mallat1999wavelet} or \cite[Theorem 2.1.2]{hogan2007time}}] \label{lem:Characterization_Wavelet_Sobolev}
% Assume that the mother wavelet $\psi$ has $\alpha>s$ vanishing moments. Then 
%  \begin{equation}
%     \|u\|_{H^{s}([-1/2,1/2])}^2 \asymp \sum_{j=0}^\infty \sum_{l=0}^{2^j-1} (1+2^{2js}) |\langle u,\psi_{j,l}\rangle|^2.
%  \end{equation}
% \end{lemma}

Let $u$ and $v$ be two functions in $L^2(\Omega)$, the notation $u\otimes v$ will be used both to indicate the function $w\in L^2(\Omega\times\Omega)$ defined by
\begin{equation}
 w(x,y) = (u\otimes v)(x,y) = u(x) v(y),
\end{equation}
or the Hilbert-Schmidt operator $w:L^2(\Omega)\to L^2(\Omega)$ defined for all $f\in L^2(\Omega)$ by:
\begin{equation}
 w(f) = (u \otimes v) f = \langle u, f \rangle v.
\end{equation}
The meaning can be inferred depending on the context.
Let $H:L^2(\Omega)\to L^2(\Omega)$ denote a linear integral operators. 
Its kernel will always be denoted $K$ and its time varying impulse response $T$. 
The linear integral operator with kernel $T$ will be denoted $J$.

The following result is an extension of the singular value decomposition to operators.
\begin{lemma}[{Schmidt decomposition \cite[Theorem 2.2]{pinkus2012n} or \cite[Theorem 1 p. 215]{helemskii2006lectures}}]\label{lem:schmidt}
 Let $H:L^2(\Omega) \to L^2(\Omega)$ denote a compact operator. There exists two finite or countable orthonormal systems $\{e_1, \ldots \}$, $\{f_1, \ldots \}$ of $L^2(\Omega)$ and a finite or infinite sequence $\sigma_1 \geq \sigma_2 \geq \ldots$ of positive numbers (tending to zero if it is infinite), such that $H$ can be decomposed as:
\begin{equation}
 H = \sum_{k \geq 1} \sigma_k \cdot e_k \otimes f_k.
\end{equation}
\end{lemma}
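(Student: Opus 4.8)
The plan is to reduce the statement to the spectral theorem for compact self-adjoint operators, applied to the auxiliary operator $H^*H$. First I would observe that $H^*H : L^2(\Omega) \to L^2(\Omega)$ is compact, being the composition of the compact operator $H$ with the bounded operator $H^*$, that it is self-adjoint, and that it is positive semi-definite since $\langle H^*H u, u\rangle = \|Hu\|^2 \geq 0$ for every $u$. The spectral theorem then provides an orthonormal basis of eigenvectors of $H^*H$ with real nonnegative eigenvalues whose only possible accumulation point is $0$; in particular the strictly positive eigenvalues can be arranged in a nonincreasing sequence $\mu_1 \geq \mu_2 \geq \ldots$ tending to $0$ in the infinite case.

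Next I would build the singular system. Denoting by $(e_k)$ the eigenvectors associated with the positive eigenvalues, I would set $\sigma_k = \sqrt{\mu_k}$ and define $f_k = \sigma_k^{-1} H e_k$. A short computation shows that the $f_k$ are orthonormal: using $H^*H e_j = \sigma_j^2 e_j$,
\begin{equation}
 \langle f_j, f_k\rangle = \frac{1}{\sigma_j \sigma_k}\langle H e_j, H e_k\rangle = \frac{1}{\sigma_j\sigma_k}\langle H^*H e_j, e_k\rangle = \frac{\sigma_j}{\sigma_k}\langle e_j, e_k\rangle = \delta_{jk}.
\end{equation}
By construction the family $(e_k)$ is orthonormal as well, and the $\sigma_k$ are positive and tend to $0$ when infinitely many, so the structural properties required of the two systems and of the sequence $(\sigma_k)$ are already in place.

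It then remains to verify the identity $H = \sum_k \sigma_k\, e_k \otimes f_k$. Here the key observation is that $\ker(H) = \ker(H^*H)$, since $H^*Hu = 0$ forces $\|Hu\|^2 = \langle H^*Hu,u\rangle = 0$. Consequently $L^2(\Omega)$ splits orthogonally as the closed span of the $(e_k)$ associated with positive eigenvalues plus $\ker(H)$. For an arbitrary $f$, decomposing $f = \sum_k \langle e_k, f\rangle e_k + f_0$ with $f_0 \in \ker(H)$ and using $H e_k = \sigma_k f_k$ yields
\begin{equation}
 Hf = \sum_k \langle e_k, f\rangle\, H e_k + H f_0 = \sum_k \sigma_k \langle e_k, f\rangle\, f_k = \left(\sum_k \sigma_k\, e_k \otimes f_k\right) f,
\end{equation}
which is exactly the claimed decomposition under the convention $(u\otimes v)f = \langle u,f\rangle v$.

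I expect the main obstacle to lie not in any single estimate but in the infinite-dimensional bookkeeping: one must justify that the eigenvectors with positive eigenvalue together with $\ker(H)$ exhaust $L^2(\Omega)$, and that the partial sums converge to $H$ in operator norm. Both points hinge on $\sigma_k \to 0$, which controls the remainder through $\|H - \sum_{k\leq N}\sigma_k\, e_k\otimes f_k\| = \sigma_{N+1}$, this equality being obtained by evaluating the tail operator on a generic $f$ and on $e_{N+1}$. Everything upstream reduces to invoking the spectral theorem for $H^*H$, so the genuine content is ensuring that this theorem applies, namely the compactness and self-adjointness of $H^*H$, and that the zero-eigenvalue subspace is correctly discarded.
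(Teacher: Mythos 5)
Your proof is correct. The paper does not actually prove this lemma---it is stated as a citation to \cite{pinkus2012n} and \cite{helemskii2006lectures}---and your argument is exactly the standard derivation those references give: apply the spectral theorem to the compact, self-adjoint, positive operator $H^*H$, set $\sigma_k = \sqrt{\mu_k}$ and $f_k = \sigma_k^{-1} H e_k$, use $\ker(H) = \ker(H^*H)$ to see that the eigenvectors with positive eigenvalue together with the kernel exhaust $L^2(\Omega)$, and control the tail in operator norm by $\sigma_{N+1} \to 0$, all of which you carry out correctly and consistently with the paper's convention $(u \otimes v)f = \langle u, f\rangle v$.
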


A function $u\in L^2(\Omega)$ is denoted in regular font whereas its discretized version $\b{u}\in \R^n$ is denoted in bold font.
The value of function $u$ at $x\in \Omega$ is denoted $u(x)$, while the $i$-th coefficient of vector $\b{u}\in \R^n$ is denoted $\b{u}[i]$. Similarly, an operator $H : L^2(\Omega) \to L^2(\Omega)$ is denoted in upper-case regular font whereas its discretized version $\b{H} \in \R^{n \times n}$ is denoted in upper-case bold font.
\section{Preliminary facts} \label{sec:preliminaries}

In this section, we gather a few basic results necessary to derive approximation results.

\subsection{Assumptions on the operator and examples} \label{sec:regularity}

All the results stated in this paper rely on the assumption that the TVIR $T$ of $H$ is a sufficiently simple function. 
By simple, we mean that i) the functions $T(x,\cdot)$ are smooth for all $x\in \Omega$ and ii) the impulse responses $T(\cdot,y)$ have a bounded support or a fast decay for all $y\in \Omega$.

There are numerous ways to capture the regularity of a function. In this paper, we assume that $T(x,\cdot)$ lives in the Hilbert spaces $H^s(\Omega)$ for all $x\in \Omega$. This hypothesis is deliberately simple to clarify the proofs and the main ideas. 
\begin{definition}[Class $\mathcal{T}^s$] \label{def:class_op}
We let $\mathcal{T}^s$ denote the class of functions $T : \Omega \times \Omega \to \R$ satisfying the smoothness condition:
$T(x,\cdot) \in H^s(\Omega), \  \forall x \in \Omega$ and $\|T(x,\cdot)\|_{H^s(\Omega)}$ is uniformly bounded in $x$, i.e:
\begin{equation}
 \sup_{x\in \Omega}\|T(x,\cdot)\|_{H^{s}(\Omega)} \leq  C <+\infty.
\end{equation}
\end{definition}
Note that if $T\in \mathcal{T}^s$, then $H$ is a mere Hilbert-Schmidt operator since:
\begin{align}
 \|H\|_{HS}^2 &= \int_{\Omega} \int_{\Omega} K(x,y)^2\,dx\,dy \\
              &= \int_{\Omega} \int_{\Omega} T(x,y)^2\,dx\,dy \\ 
              &= \int_{\Omega} \|T(x,\cdot)\|_{L^2(\Omega)}^2\,dx <+\infty.
\end{align}

We will often use the following regularity assumption.
\begin{assumption}\label{assumption1}
 The TVIR $T$ of $H$ belongs to $\mathcal{T}^s$.
\end{assumption}

In many applications, the impulse responses have a bounded support, or at least a fast spatial decay allowing to neglect the tails. 
This property will be exploited to design faster algorithms. 
This hypothesis can be expressed by the following assumption.
\begin{assumption}\label{assumption2}
$T(x,y)=0, \ \forall |x|>\kappa/2$.
\end{assumption}

\subsection{Examples}

We provide 3 examples of kernels that may appear in applications.  
Figure \ref{fig:kernels_illustrations} shows each kernel as a 2D image, 
the associated TVIR and the spectrum of the operator $J$ (the linear integral operator with kernel $T$) computed with an SVD.

\begin{example}\label{example1}
 A typical kernel that motivates our study is defined by:
 \begin{equation}
  K(x,y)= \frac{1}{\sqrt{2\pi}\sigma(y)} \exp\left( - \frac{(x-y)^2}{2\sigma^2(y)}\right).
 \end{equation}
 The impulse responses $K(\cdot,y)$ are Gaussian for all $y\in \Omega$. Their variance $\sigma(y)>0$ varies depending on the position $y$.
 The TVIR of $K$ is defined by:
 \begin{equation}
  T(x,y)= \frac{1}{\sqrt{2\pi}\sigma(y)} \exp\left( - \frac{x^2}{2\sigma^2(y)}\right).
 \end{equation}
 The impulse responses $T(\cdot,y)$ are not compactly supported, therefore, $\kappa=1$ in assumption \ref{assumption2}. However, it is possible to truncate them by setting $\kappa=3\sup_{y\in \Omega} \sigma(y)$ for instance.
 This kernel satisfies assumption \ref{assumption1} only if $\sigma:\Omega \to \R$ is sufficiently smooth. 
 In figure \ref{fig:kernels_illustrations}, left column, we set $\sigma(y) = 0.08+ 0.02\cos(2\pi y)$.
\end{example}

\begin{example}\label{example2}
The second example is given by:
\begin{equation}
  T(x,y)=  \frac{2}{\sigma(y)} \max(1 - 2\sigma(y) |x|,0).
 \end{equation}
 The impulse responses $T(\cdot,y)$ are cardinal B-splines of degree $1$ and width $\sigma(y)>0$.
 They are compactly supported with $\kappa=\sup_{y\in \Omega} \sigma(y)$.
 This kernel satisfies assumption \ref{assumption2} only if $\sigma:\Omega \to \R$ is sufficiently smooth. 
 In figure \ref{fig:kernels_illustrations}, central column, we set $\sigma(y) = 0.1 + 0.3 (1-|y|)$.
 This kernel satisfies assumption \ref{assumption1} with $s=1$.
\end{example}

\begin{example}\label{example3}
The last example is a discontinuous TVIR. We set:
\begin{equation}
  T(x,y)=  g_{\sigma_1}(x)\one_{[-1/4,1/4]}(y) + g_{\sigma_2}(x)(1-\one_{[-1/4,1/4]}(y)),
 \end{equation}
 where $g_{\sigma}(x) =\frac{1}{\sqrt{2\pi}} \exp\left(-\frac{x^2}{\sigma^2}\right)$. This corresponds to the last column in figure \ref{fig:kernels_illustrations}, with $\sigma_1=0.05$ and $\sigma_2=0.1$. 
  For this kernel, both assumptions \ref{assumption1} and \ref{assumption2} are violated. Notice however that $T$ is the sum of two tensor products and can therefore be represented using only four 1D functions. 
  The spectrum of $J$ should have only 2 non zero elements. This is verified in figure \ref{fig:k33} up to numerical errors.
\end{example}

\begin{figure}
    \centering
    \begin{subfigure}[b]{0.3\textwidth}
        \includegraphics[width=\textwidth]{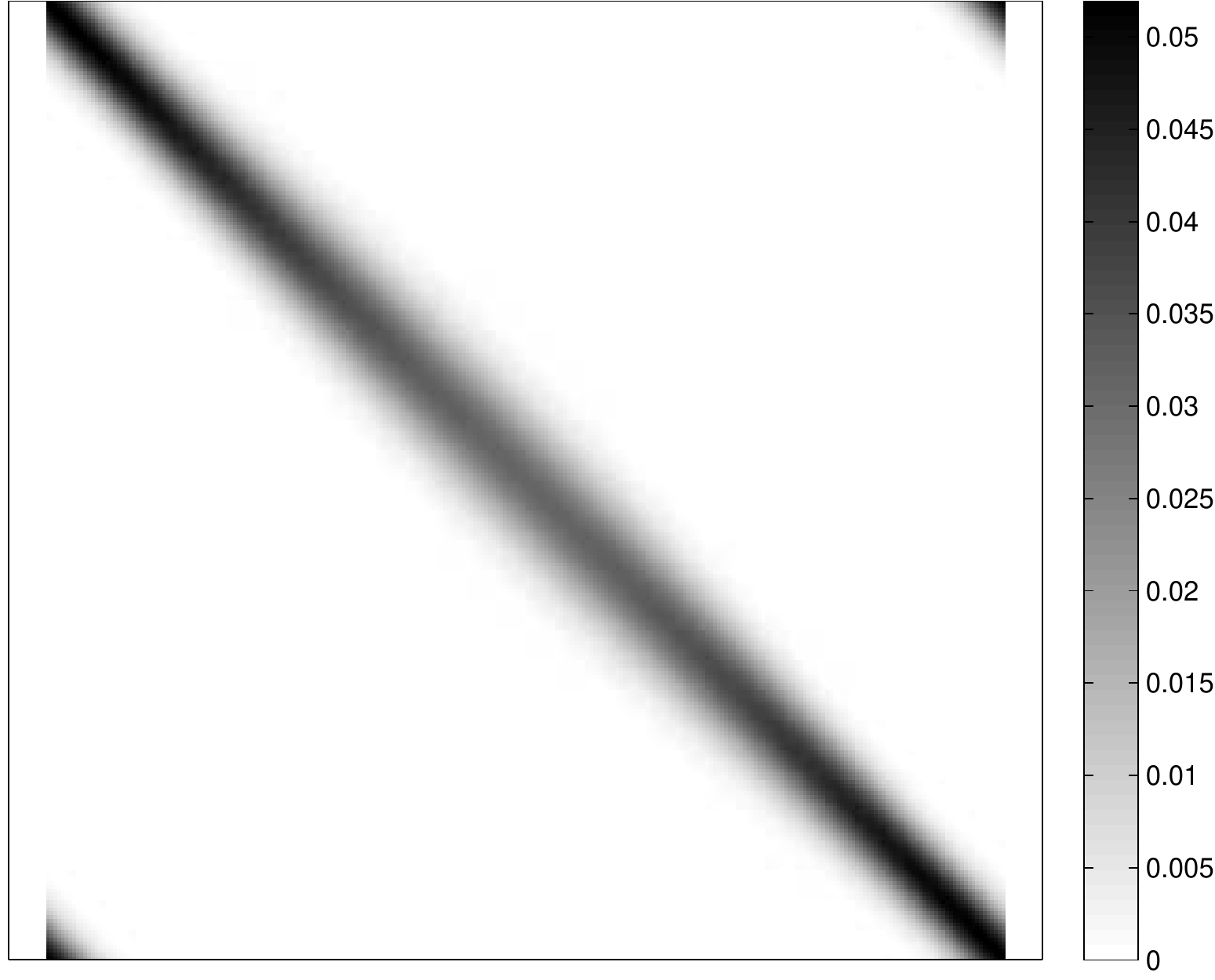}
        \caption{Kernel 1}
        \label{fig:k11}
    \end{subfigure}
    ~ %add desired spacing between images, e. g. ~, \quad, \qquad, \hfill etc. 
      %(or a blank line to force the subfigure onto a new line)
    \begin{subfigure}[b]{0.3\textwidth}
        \includegraphics[width=\textwidth]{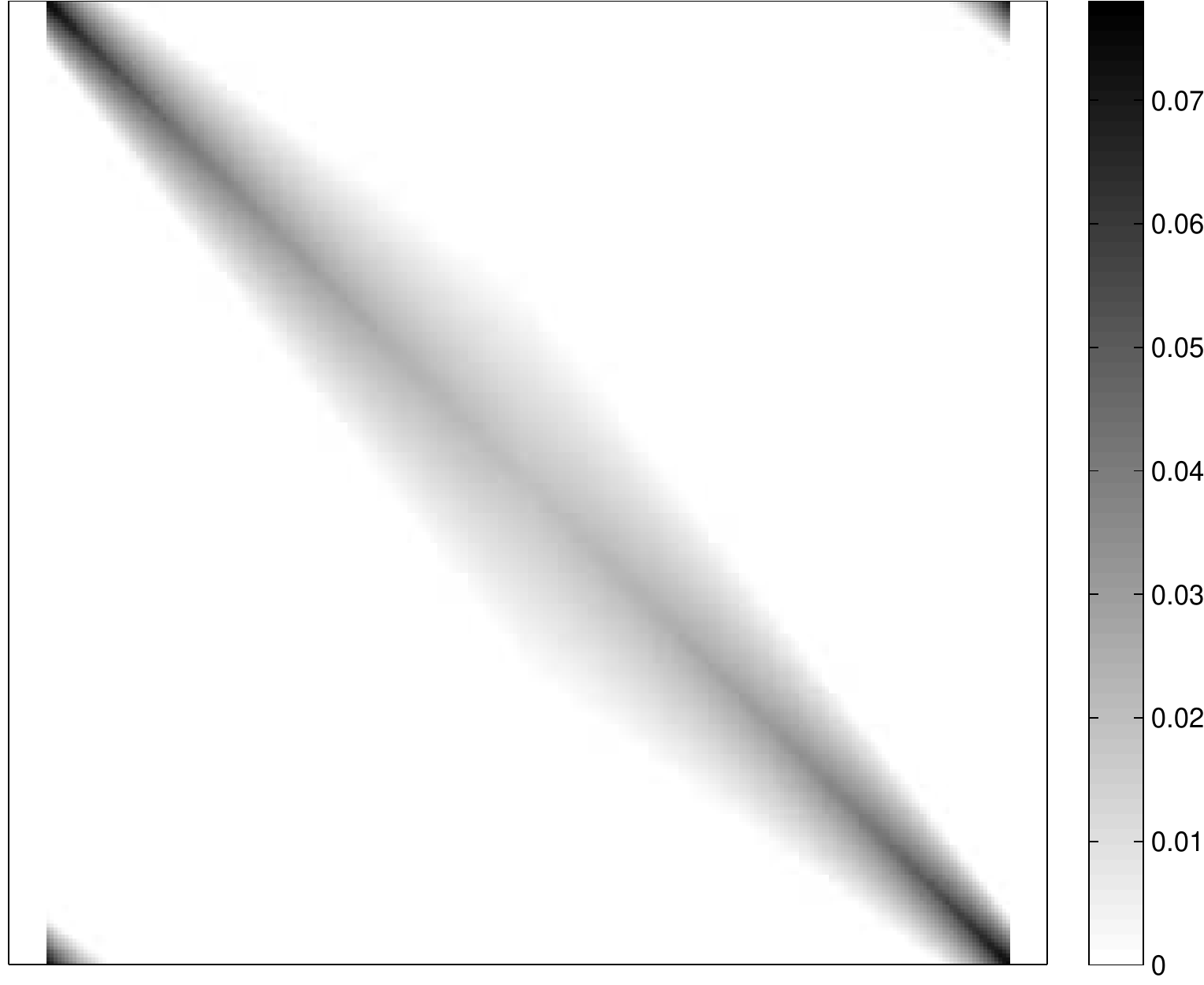}
        \caption{Kernel 2}
        \label{fig:k21}
    \end{subfigure}
    ~ %add desired spacing between images, e. g. ~, \quad, \qquad, \hfill etc. 
    %(or a blank line to force the subfigure onto a new line)
    \begin{subfigure}[b]{0.3\textwidth}
        \includegraphics[width=\textwidth]{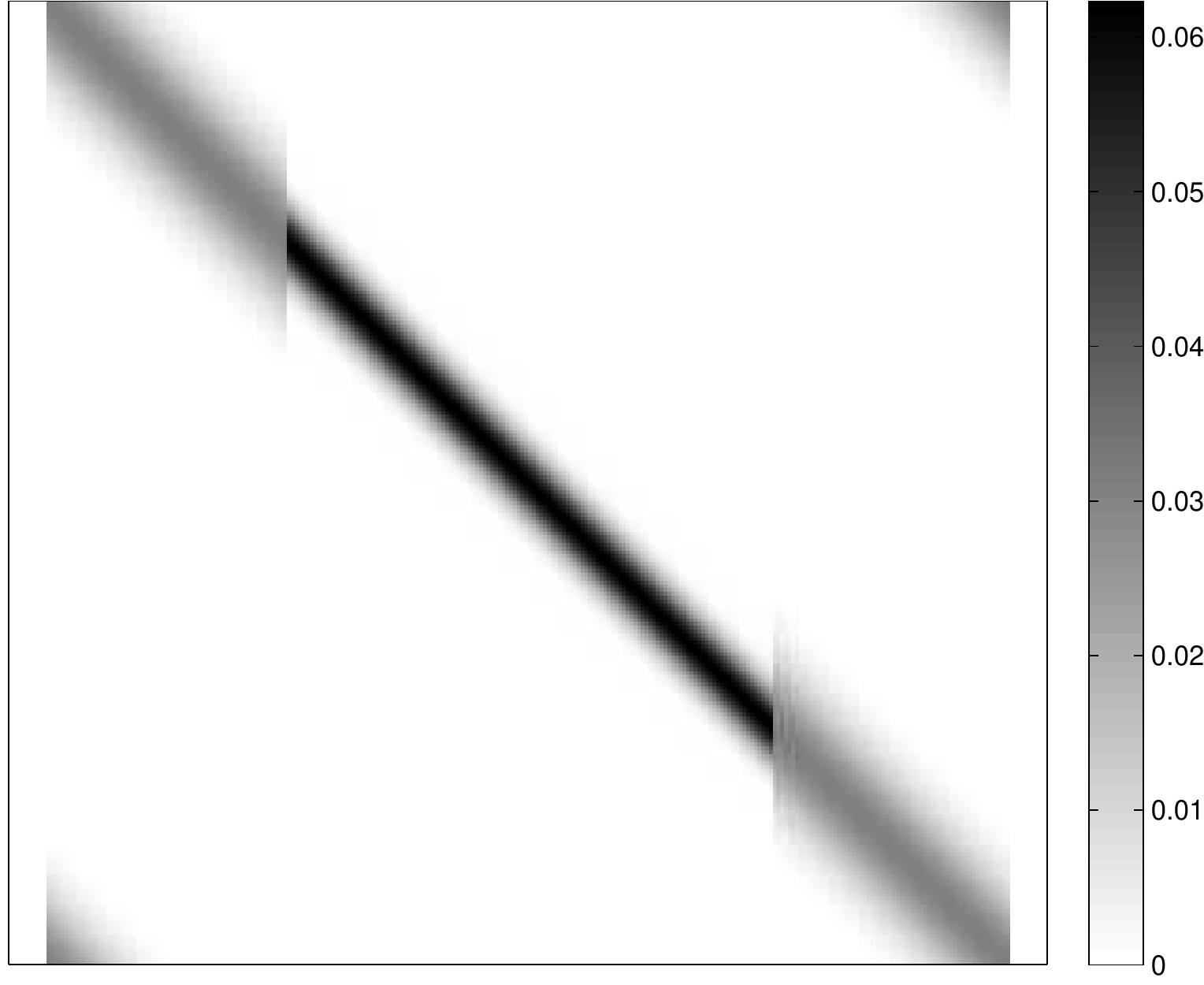}
        \caption{Kernel 3}
        \label{fig:k31}
    \end{subfigure}
    
    \begin{subfigure}[b]{0.3\textwidth}
        \includegraphics[width=\textwidth]{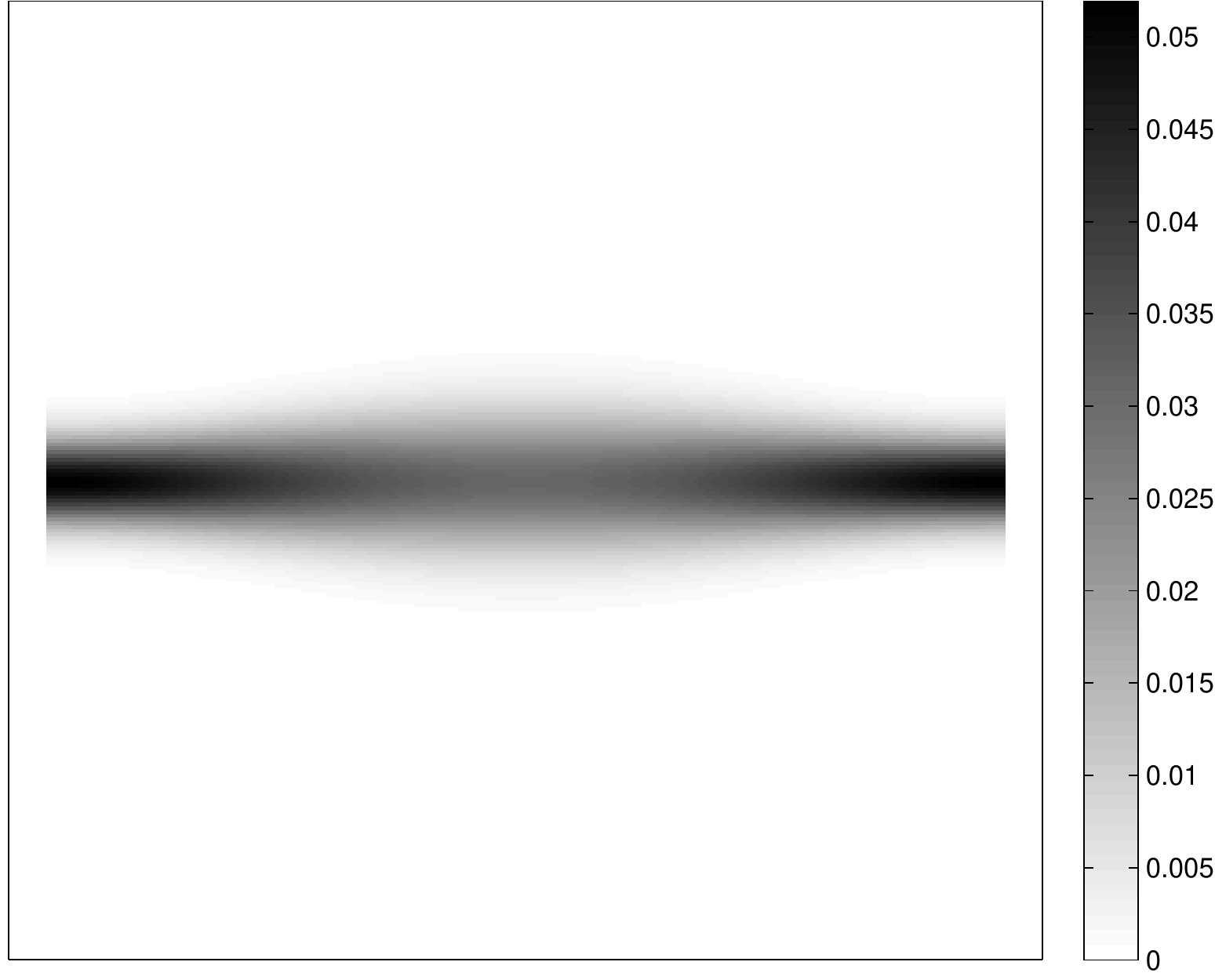}
        \caption{TVIR 1}
        \label{fig:k12}
    \end{subfigure}
    ~ %add desired spacing between images, e. g. ~, \quad, \qquad, \hfill etc. 
      %(or a blank line to force the subfigure onto a new line)
    \begin{subfigure}[b]{0.3\textwidth}
        \includegraphics[width=\textwidth]{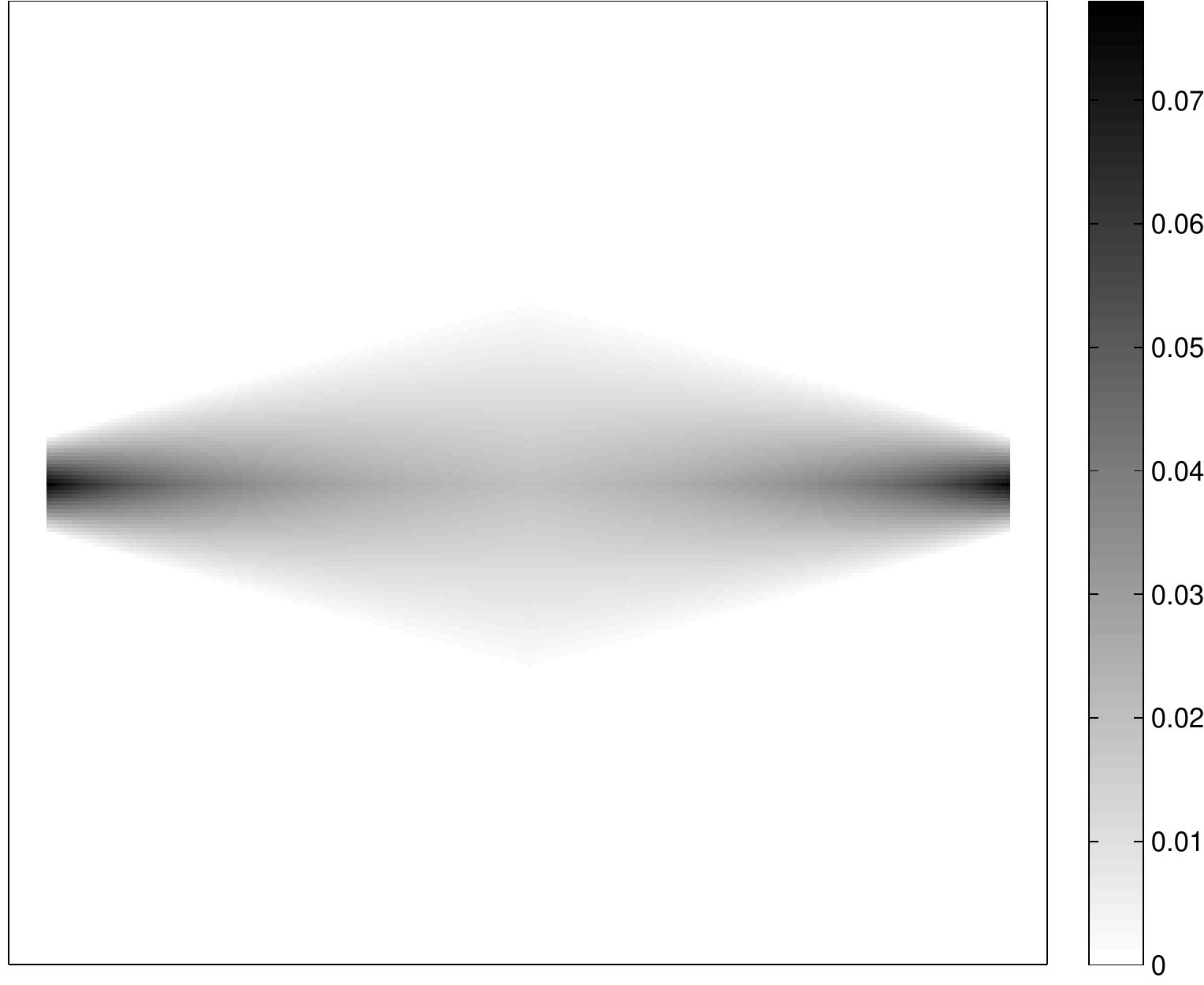}
        \caption{TVIR 2}
        \label{fig:k22}
    \end{subfigure}
    ~ %add desired spacing between images, e. g. ~, \quad, \qquad, \hfill etc. 
    %(or a blank line to force the subfigure onto a new line)
    \begin{subfigure}[b]{0.3\textwidth}
        \includegraphics[width=\textwidth]{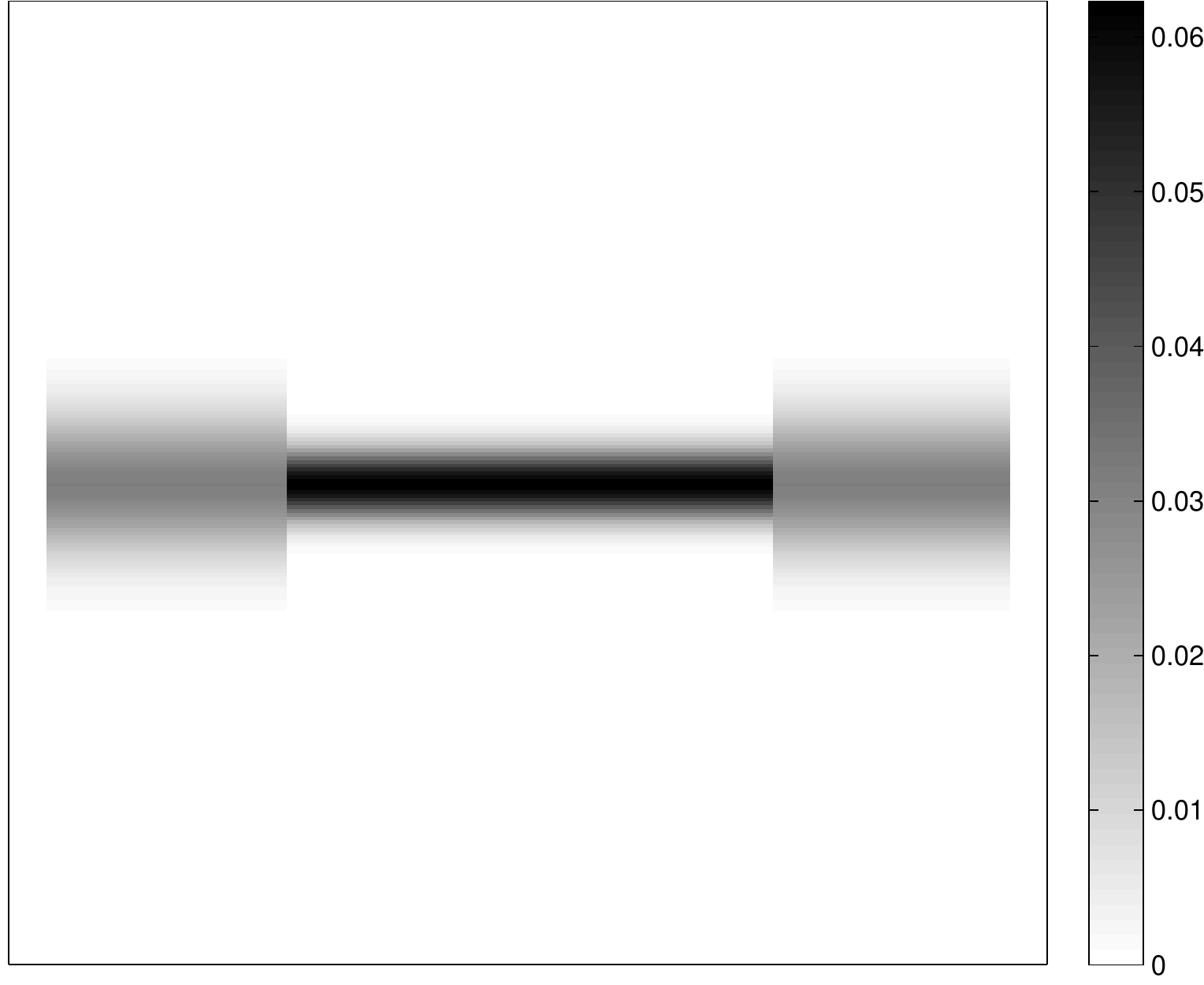}
        \caption{TVIR 3}
        \label{fig:k32}
    \end{subfigure}   
    
    \begin{subfigure}[b]{0.3\textwidth}
	   % This file was created by matlab2tikz.
%
%The latest updates can be retrieved from
%  http://www.mathworks.com/matlabcentral/fileexchange/22022-matlab2tikz-matlab2tikz
%where you can also make suggestions and rate matlab2tikz.
%
\begin{tikzpicture}

\hspace{-0.6cm}
\begin{axis}[%
width=0.8\textwidth,
height=0.8\textwidth,
at={(0\textwidth,0\textwidth)},
scale only axis,
separate axis lines,
every outer x axis line/.append style={black},
every x tick label/.append style={font=\tiny},
xmin=1,
xmax=256,
every outer y axis line/.append style={black},
every y tick label/.append style={font=\tiny},
ymode=log,
ymin=1e-16,
ymax=10,
yminorticks=true,
axis background/.style={fill=white},
extra x ticks ={1},
extra x tick labels={1}
]
\addplot [color=blue,solid,forget plot]
  table[row sep=crcr]{%
1	2.67780693104762\\
2	0.334183524970324\\
3	0.0361980819968495\\
4	0.00413248520979801\\
5	0.000483432623456852\\
6	5.73559970717887e-05\\
7	6.86762456681281e-06\\
8	8.27630265172482e-07\\
9	1.00217658115312e-07\\
10	1.21802094594582e-08\\
11	1.48470062188645e-09\\
12	1.81409305657456e-10\\
13	2.22095793732952e-11\\
14	2.72362316180868e-12\\
15	3.34484968765629e-13\\
16	4.11300491701563e-14\\
17	5.06550558346858e-15\\
18	6.49617955415584e-16\\
19	6.0592046480274e-16\\
20	4.10169288117018e-16\\
21	2.63304817691486e-16\\
22	2.63304817691486e-16\\
23	2.63304817691486e-16\\
24	2.63304817691486e-16\\
25	2.63304817691486e-16\\
26	2.63304817691486e-16\\
27	2.63304817691486e-16\\
28	2.63304817691486e-16\\
29	2.63304817691486e-16\\
30	2.63304817691486e-16\\
31	2.63304817691486e-16\\
32	2.63304817691486e-16\\
33	2.63304817691486e-16\\
34	2.63304817691486e-16\\
35	2.63304817691486e-16\\
36	2.63304817691486e-16\\
37	2.63304817691486e-16\\
38	2.63304817691486e-16\\
39	2.63304817691486e-16\\
40	2.63304817691486e-16\\
41	2.63304817691486e-16\\
42	2.63304817691486e-16\\
43	2.63304817691486e-16\\
44	2.63304817691486e-16\\
45	2.63304817691486e-16\\
46	2.63304817691486e-16\\
47	2.63304817691486e-16\\
48	2.63304817691486e-16\\
49	2.63304817691486e-16\\
50	2.63304817691486e-16\\
51	2.63304817691486e-16\\
52	2.63304817691486e-16\\
53	2.63304817691486e-16\\
54	2.63304817691486e-16\\
55	2.63304817691486e-16\\
56	2.63304817691486e-16\\
57	2.63304817691486e-16\\
58	2.63304817691486e-16\\
59	2.63304817691486e-16\\
60	2.63304817691486e-16\\
61	2.63304817691486e-16\\
62	2.63304817691486e-16\\
63	2.63304817691486e-16\\
64	2.63304817691486e-16\\
65	2.63304817691486e-16\\
66	2.63304817691486e-16\\
67	2.63304817691486e-16\\
68	2.63304817691486e-16\\
69	2.63304817691486e-16\\
70	2.63304817691486e-16\\
71	2.63304817691486e-16\\
72	2.63304817691486e-16\\
73	2.63304817691486e-16\\
74	2.63304817691486e-16\\
75	2.63304817691486e-16\\
76	2.63304817691486e-16\\
77	2.63304817691486e-16\\
78	2.63304817691486e-16\\
79	2.63304817691486e-16\\
80	2.63304817691486e-16\\
81	2.63304817691486e-16\\
82	2.63304817691486e-16\\
83	2.63304817691486e-16\\
84	2.63304817691486e-16\\
85	2.63304817691486e-16\\
86	2.63304817691486e-16\\
87	2.63304817691486e-16\\
88	2.63304817691486e-16\\
89	2.63304817691486e-16\\
90	2.63304817691486e-16\\
91	2.63304817691486e-16\\
92	2.63304817691486e-16\\
93	2.63304817691486e-16\\
94	2.63304817691486e-16\\
95	2.63304817691486e-16\\
96	2.63304817691486e-16\\
97	2.63304817691486e-16\\
98	2.63304817691486e-16\\
99	2.63304817691486e-16\\
100	2.63304817691486e-16\\
101	2.63304817691486e-16\\
102	2.63304817691486e-16\\
103	2.63304817691486e-16\\
104	2.63304817691486e-16\\
105	2.63304817691486e-16\\
106	2.63304817691486e-16\\
107	2.63304817691486e-16\\
108	2.63304817691486e-16\\
109	2.63304817691486e-16\\
110	2.63304817691486e-16\\
111	2.63304817691486e-16\\
112	2.63304817691486e-16\\
113	2.63304817691486e-16\\
114	2.63304817691486e-16\\
115	2.63304817691486e-16\\
116	2.63304817691486e-16\\
117	2.63304817691486e-16\\
118	2.63304817691486e-16\\
119	2.63304817691486e-16\\
120	2.63304817691486e-16\\
121	2.63304817691486e-16\\
122	2.63304817691486e-16\\
123	2.63304817691486e-16\\
124	2.63304817691486e-16\\
125	2.63304817691486e-16\\
126	2.63304817691486e-16\\
127	2.63304817691486e-16\\
128	2.63304817691486e-16\\
129	2.63304817691486e-16\\
130	2.63304817691486e-16\\
131	2.63304817691486e-16\\
132	2.63304817691486e-16\\
133	2.63304817691486e-16\\
134	2.63304817691486e-16\\
135	2.63304817691486e-16\\
136	2.63304817691486e-16\\
137	2.63304817691486e-16\\
138	2.63304817691486e-16\\
139	2.63304817691486e-16\\
140	2.63304817691486e-16\\
141	2.63304817691486e-16\\
142	2.63304817691486e-16\\
143	2.63304817691486e-16\\
144	2.63304817691486e-16\\
145	2.63304817691486e-16\\
146	2.63304817691486e-16\\
147	2.63304817691486e-16\\
148	2.63304817691486e-16\\
149	2.63304817691486e-16\\
150	2.63304817691486e-16\\
151	2.63304817691486e-16\\
152	2.63304817691486e-16\\
153	2.63304817691486e-16\\
154	2.63304817691486e-16\\
155	2.63304817691486e-16\\
156	2.63304817691486e-16\\
157	2.63304817691486e-16\\
158	2.63304817691486e-16\\
159	2.63304817691486e-16\\
160	2.63304817691486e-16\\
161	2.63304817691486e-16\\
162	2.63304817691486e-16\\
163	2.63304817691486e-16\\
164	2.63304817691486e-16\\
165	2.63304817691486e-16\\
166	2.63304817691486e-16\\
167	2.63304817691486e-16\\
168	2.63304817691486e-16\\
169	2.63304817691486e-16\\
170	2.63304817691486e-16\\
171	2.63304817691486e-16\\
172	2.63304817691486e-16\\
173	2.63304817691486e-16\\
174	2.63304817691486e-16\\
175	2.63304817691486e-16\\
176	2.63304817691486e-16\\
177	2.63304817691486e-16\\
178	2.63304817691486e-16\\
179	2.63304817691486e-16\\
180	2.63304817691486e-16\\
181	2.63304817691486e-16\\
182	2.63304817691486e-16\\
183	2.63304817691486e-16\\
184	2.63304817691486e-16\\
185	2.63304817691486e-16\\
186	2.63304817691486e-16\\
187	2.63304817691486e-16\\
188	2.63304817691486e-16\\
189	2.63304817691486e-16\\
190	2.63304817691486e-16\\
191	2.63304817691486e-16\\
192	2.63304817691486e-16\\
193	2.63304817691486e-16\\
194	2.63304817691486e-16\\
195	2.63304817691486e-16\\
196	2.63304817691486e-16\\
197	2.63304817691486e-16\\
198	2.63304817691486e-16\\
199	2.63304817691486e-16\\
200	2.63304817691486e-16\\
201	2.63304817691486e-16\\
202	2.63304817691486e-16\\
203	2.63304817691486e-16\\
204	2.63304817691486e-16\\
205	2.63304817691486e-16\\
206	2.63304817691486e-16\\
207	2.63304817691486e-16\\
208	2.63304817691486e-16\\
209	2.63304817691486e-16\\
210	2.63304817691486e-16\\
211	2.63304817691486e-16\\
212	2.63304817691486e-16\\
213	2.63304817691486e-16\\
214	2.63304817691486e-16\\
215	2.63304817691486e-16\\
216	2.63304817691486e-16\\
217	2.63304817691486e-16\\
218	2.63304817691486e-16\\
219	2.63304817691486e-16\\
220	2.63304817691486e-16\\
221	2.63304817691486e-16\\
222	2.63304817691486e-16\\
223	2.63304817691486e-16\\
224	2.63304817691486e-16\\
225	2.63304817691486e-16\\
226	2.63304817691486e-16\\
227	2.63304817691486e-16\\
228	2.63304817691486e-16\\
229	2.63304817691486e-16\\
230	2.63304817691486e-16\\
231	2.63304817691486e-16\\
232	2.63304817691486e-16\\
233	2.63304817691486e-16\\
234	2.63304817691486e-16\\
235	2.63304817691486e-16\\
236	2.63304817691486e-16\\
237	2.63304817691486e-16\\
238	2.63304817691486e-16\\
239	2.63304817691486e-16\\
240	2.63304817691486e-16\\
241	2.63304817691486e-16\\
242	2.63304817691486e-16\\
243	2.63304817691486e-16\\
244	2.63304817691486e-16\\
245	2.63304817691486e-16\\
246	2.63304817691486e-16\\
247	2.63304817691486e-16\\
248	2.63304817691486e-16\\
249	2.63304817691486e-16\\
250	2.63304817691486e-16\\
251	2.63304817691486e-16\\
252	2.63304817691486e-16\\
253	2.63304817691486e-16\\
254	2.63304817691486e-16\\
255	2.13765889953001e-16\\
256	2.43477847061029e-17\\
};
\end{axis}
\end{tikzpicture}%
	   \caption{Spectrum 1}
        \label{fig:k13}
    \end{subfigure}
    ~ %add desired spacing between images, e. g. ~, \quad, \qquad, \hfill etc. 
      %(or a blank line to force the subfigure onto a new line)
    \begin{subfigure}[b]{0.3\textwidth}
	   % This file was created by matlab2tikz.
%
%The latest updates can be retrieved from
%  http://www.mathworks.com/matlabcentral/fileexchange/22022-matlab2tikz-matlab2tikz
%where you can also make suggestions and rate matlab2tikz.
%
\begin{tikzpicture}

\hspace{-0.6cm}
\begin{axis}[%
width=0.8\textwidth,
height=0.8\textwidth,
at={(0\textwidth,0\textwidth)},
scale only axis,
separate axis lines,
every outer x axis line/.append style={black},
every x tick label/.append style={font=\tiny},
xmin=1,
xmax=256,
every outer y axis line/.append style={black},
every y tick label/.append style={font=\tiny},
ymode=log,
ymin=1e-16,
ymax=10,
yminorticks=true,
axis background/.style={fill=white},
extra x ticks ={1},
extra x tick labels={1},
]
\addplot [color=blue,solid,forget plot]
  table[row sep=crcr]{%
1	2.39183344854508\\
2	0.620779375343812\\
3	0.208273597183397\\
4	0.0948261162743502\\
5	0.0525560337018792\\
6	0.033020017908125\\
7	0.0225574501587846\\
8	0.0163489444461776\\
9	0.012388235133574\\
10	0.00972250969033367\\
11	0.00782359357505018\\
12	0.00642661164021555\\
13	0.00539690380180924\\
14	0.00458542505281676\\
15	0.00395845744856025\\
16	0.00343901215293715\\
17	0.00300913063627927\\
18	0.00267040393375797\\
19	0.00239604678650024\\
20	0.00213810745401609\\
21	0.00190675362098705\\
22	0.00172451408714787\\
23	0.00155101176766781\\
24	0.00138996683836288\\
25	0.00126313664528978\\
26	0.00113293326377981\\
27	0.00111734724699201\\
28	0.00102385346577727\\
29	0.000926165801575906\\
30	0.00083130467220514\\
31	0.00075464498390089\\
32	0.000675391364797016\\
33	0.000610834973133848\\
34	0.000548130006201863\\
35	0.000487777702463827\\
36	0.000439578658398402\\
37	0.000390401494625637\\
38	0.000342312203219229\\
39	0.000296984512909132\\
40	0.000251435268093714\\
41	8.27974045249128e-05\\
42	1.06218742625734e-15\\
43	4.84064790234776e-16\\
44	3.49850237609337e-16\\
45	2.16361299065115e-16\\
46	2.16361299065115e-16\\
47	2.16361299065115e-16\\
48	2.16361299065115e-16\\
49	2.16361299065115e-16\\
50	2.16361299065115e-16\\
51	2.16361299065115e-16\\
52	2.16361299065115e-16\\
53	2.16361299065115e-16\\
54	2.16361299065115e-16\\
55	2.16361299065115e-16\\
56	2.16361299065115e-16\\
57	2.16361299065115e-16\\
58	2.16361299065115e-16\\
59	2.16361299065115e-16\\
60	2.16361299065115e-16\\
61	2.16361299065115e-16\\
62	2.16361299065115e-16\\
63	2.16361299065115e-16\\
64	2.16361299065115e-16\\
65	2.16361299065115e-16\\
66	2.16361299065115e-16\\
67	2.16361299065115e-16\\
68	2.16361299065115e-16\\
69	2.16361299065115e-16\\
70	2.16361299065115e-16\\
71	2.16361299065115e-16\\
72	2.16361299065115e-16\\
73	2.16361299065115e-16\\
74	2.16361299065115e-16\\
75	2.16361299065115e-16\\
76	2.16361299065115e-16\\
77	2.16361299065115e-16\\
78	2.16361299065115e-16\\
79	2.16361299065115e-16\\
80	2.16361299065115e-16\\
81	2.16361299065115e-16\\
82	2.16361299065115e-16\\
83	2.16361299065115e-16\\
84	2.16361299065115e-16\\
85	2.16361299065115e-16\\
86	2.16361299065115e-16\\
87	2.16361299065115e-16\\
88	2.16361299065115e-16\\
89	2.16361299065115e-16\\
90	2.16361299065115e-16\\
91	2.16361299065115e-16\\
92	2.16361299065115e-16\\
93	2.16361299065115e-16\\
94	2.16361299065115e-16\\
95	2.16361299065115e-16\\
96	2.16361299065115e-16\\
97	2.16361299065115e-16\\
98	2.16361299065115e-16\\
99	2.16361299065115e-16\\
100	2.16361299065115e-16\\
101	2.16361299065115e-16\\
102	2.16361299065115e-16\\
103	2.16361299065115e-16\\
104	2.16361299065115e-16\\
105	2.16361299065115e-16\\
106	2.16361299065115e-16\\
107	2.16361299065115e-16\\
108	2.16361299065115e-16\\
109	2.16361299065115e-16\\
110	2.16361299065115e-16\\
111	2.16361299065115e-16\\
112	2.16361299065115e-16\\
113	2.16361299065115e-16\\
114	2.16361299065115e-16\\
115	2.16361299065115e-16\\
116	2.16361299065115e-16\\
117	2.16361299065115e-16\\
118	2.16361299065115e-16\\
119	2.16361299065115e-16\\
120	2.16361299065115e-16\\
121	2.16361299065115e-16\\
122	2.16361299065115e-16\\
123	2.16361299065115e-16\\
124	2.16361299065115e-16\\
125	2.16361299065115e-16\\
126	2.16361299065115e-16\\
127	2.16361299065115e-16\\
128	2.16361299065115e-16\\
129	2.16361299065115e-16\\
130	2.16361299065115e-16\\
131	2.16361299065115e-16\\
132	2.16361299065115e-16\\
133	2.16361299065115e-16\\
134	2.16361299065115e-16\\
135	2.16361299065115e-16\\
136	2.16361299065115e-16\\
137	2.16361299065115e-16\\
138	2.16361299065115e-16\\
139	2.16361299065115e-16\\
140	2.16361299065115e-16\\
141	2.16361299065115e-16\\
142	2.16361299065115e-16\\
143	2.16361299065115e-16\\
144	2.16361299065115e-16\\
145	2.16361299065115e-16\\
146	2.16361299065115e-16\\
147	2.16361299065115e-16\\
148	2.16361299065115e-16\\
149	2.16361299065115e-16\\
150	2.16361299065115e-16\\
151	2.16361299065115e-16\\
152	2.16361299065115e-16\\
153	2.16361299065115e-16\\
154	2.16361299065115e-16\\
155	2.16361299065115e-16\\
156	2.16361299065115e-16\\
157	2.16361299065115e-16\\
158	2.16361299065115e-16\\
159	2.16361299065115e-16\\
160	2.16361299065115e-16\\
161	2.16361299065115e-16\\
162	2.16361299065115e-16\\
163	2.16361299065115e-16\\
164	2.16361299065115e-16\\
165	2.16361299065115e-16\\
166	2.16361299065115e-16\\
167	2.16361299065115e-16\\
168	2.16361299065115e-16\\
169	2.16361299065115e-16\\
170	2.16361299065115e-16\\
171	2.16361299065115e-16\\
172	2.16361299065115e-16\\
173	2.16361299065115e-16\\
174	2.16361299065115e-16\\
175	2.16361299065115e-16\\
176	2.16361299065115e-16\\
177	2.16361299065115e-16\\
178	2.16361299065115e-16\\
179	2.16361299065115e-16\\
180	2.16361299065115e-16\\
181	2.16361299065115e-16\\
182	2.16361299065115e-16\\
183	2.16361299065115e-16\\
184	2.16361299065115e-16\\
185	2.16361299065115e-16\\
186	2.16361299065115e-16\\
187	2.16361299065115e-16\\
188	2.16361299065115e-16\\
189	2.16361299065115e-16\\
190	2.16361299065115e-16\\
191	2.16361299065115e-16\\
192	2.16361299065115e-16\\
193	2.16361299065115e-16\\
194	2.16361299065115e-16\\
195	2.16361299065115e-16\\
196	2.16361299065115e-16\\
197	2.16361299065115e-16\\
198	2.16361299065115e-16\\
199	2.16361299065115e-16\\
200	2.16361299065115e-16\\
201	2.16361299065115e-16\\
202	2.16361299065115e-16\\
203	2.16361299065115e-16\\
204	2.16361299065115e-16\\
205	2.16361299065115e-16\\
206	2.16361299065115e-16\\
207	2.16361299065115e-16\\
208	2.16361299065115e-16\\
209	2.16361299065115e-16\\
210	2.16361299065115e-16\\
211	2.16361299065115e-16\\
212	2.16361299065115e-16\\
213	2.16361299065115e-16\\
214	2.16361299065115e-16\\
215	2.16361299065115e-16\\
216	2.16361299065115e-16\\
217	2.16361299065115e-16\\
218	2.16361299065115e-16\\
219	2.16361299065115e-16\\
220	2.16361299065115e-16\\
221	2.16361299065115e-16\\
222	2.16361299065115e-16\\
223	2.16361299065115e-16\\
224	2.16361299065115e-16\\
225	2.16361299065115e-16\\
226	2.16361299065115e-16\\
227	2.16361299065115e-16\\
228	2.16361299065115e-16\\
229	2.16361299065115e-16\\
230	2.16361299065115e-16\\
231	2.16361299065115e-16\\
232	2.16361299065115e-16\\
233	2.16361299065115e-16\\
234	2.16361299065115e-16\\
235	2.16361299065115e-16\\
236	2.16361299065115e-16\\
237	2.16361299065115e-16\\
238	2.16361299065115e-16\\
239	2.16361299065115e-16\\
240	2.16361299065115e-16\\
241	2.16361299065115e-16\\
242	2.16361299065115e-16\\
243	2.16361299065115e-16\\
244	2.16361299065115e-16\\
245	2.16361299065115e-16\\
246	2.16361299065115e-16\\
247	2.16361299065115e-16\\
248	2.16361299065115e-16\\
249	2.16361299065115e-16\\
250	2.16361299065115e-16\\
251	2.16361299065115e-16\\
252	2.16361299065115e-16\\
253	2.16361299065115e-16\\
254	2.16361299065115e-16\\
255	2.16217570140592e-16\\
256	7.64536669508705e-17\\
};
\end{axis}
\end{tikzpicture}%
	   \caption{Spectrum 2}
        \label{fig:k23}
    \end{subfigure}
    ~ %add desired spacing between images, e. g. ~, \quad, \qquad, \hfill etc. 
    %(or a blank line to force the subfigure onto a new line)
    \begin{subfigure}[b]{0.3\textwidth}
	   % This file was created by matlab2tikz.
%
%The latest updates can be retrieved from
%  http://www.mathworks.com/matlabcentral/fileexchange/22022-matlab2tikz-matlab2tikz
%where you can also make suggestions and rate matlab2tikz.
%
\begin{tikzpicture}

\hspace{-0.6cm}
\begin{axis}[%
width=0.8\textwidth,
height=0.8\textwidth,
at={(0\textwidth,0\textwidth)},
scale only axis,
separate axis lines,
every outer x axis line/.append style={black},
every x tick label/.append style={font=\tiny},
xmin=1,
xmax=256,
every outer y axis line/.append style={black},
every y tick label/.append style={font=\tiny},
ymode=log,
ymin=1e-16,
ymax=10,
yminorticks=true,
axis background/.style={fill=white},
extra x ticks ={1},
extra x tick labels={1}
]
\addplot [color=blue,solid,forget plot]
  table[row sep=crcr]{%
1	2.84455961068266\\
2	0.627186599104773\\
3	1.42925953153356e-14\\
4	1.2967543550273e-14\\
5	1.12817599513288e-14\\
6	9.44855515548304e-15\\
7	8.77644118106567e-15\\
8	7.23214801795063e-15\\
9	7.07369274966177e-15\\
10	6.87933549515747e-15\\
11	5.47828482256744e-15\\
12	4.57942209325791e-15\\
13	3.46795445452456e-15\\
14	2.81000859274028e-15\\
15	2.3548801820767e-15\\
16	9.24214486558748e-16\\
17	3.41616895911465e-16\\
18	2.72381220395581e-16\\
19	2.70443031817326e-16\\
20	2.70443031817326e-16\\
21	2.70443031817326e-16\\
22	2.70443031817326e-16\\
23	2.70443031817326e-16\\
24	2.70443031817326e-16\\
25	2.70443031817326e-16\\
26	2.70443031817326e-16\\
27	2.70443031817326e-16\\
28	2.70443031817326e-16\\
29	2.70443031817326e-16\\
30	2.70443031817326e-16\\
31	2.70443031817326e-16\\
32	2.70443031817326e-16\\
33	2.70443031817326e-16\\
34	2.70443031817326e-16\\
35	2.70443031817326e-16\\
36	2.70443031817326e-16\\
37	2.70443031817326e-16\\
38	2.70443031817326e-16\\
39	2.70443031817326e-16\\
40	2.70443031817326e-16\\
41	2.70443031817326e-16\\
42	2.70443031817326e-16\\
43	2.70443031817326e-16\\
44	2.70443031817326e-16\\
45	2.70443031817326e-16\\
46	2.70443031817326e-16\\
47	2.70443031817326e-16\\
48	2.70443031817326e-16\\
49	2.70443031817326e-16\\
50	2.70443031817326e-16\\
51	2.70443031817326e-16\\
52	2.70443031817326e-16\\
53	2.70443031817326e-16\\
54	2.70443031817326e-16\\
55	2.70443031817326e-16\\
56	2.70443031817326e-16\\
57	2.70443031817326e-16\\
58	2.70443031817326e-16\\
59	2.70443031817326e-16\\
60	2.70443031817326e-16\\
61	2.70443031817326e-16\\
62	2.70443031817326e-16\\
63	2.70443031817326e-16\\
64	2.70443031817326e-16\\
65	2.70443031817326e-16\\
66	2.70443031817326e-16\\
67	2.70443031817326e-16\\
68	2.70443031817326e-16\\
69	2.70443031817326e-16\\
70	2.70443031817326e-16\\
71	2.70443031817326e-16\\
72	2.70443031817326e-16\\
73	2.70443031817326e-16\\
74	2.70443031817326e-16\\
75	2.70443031817326e-16\\
76	2.70443031817326e-16\\
77	2.70443031817326e-16\\
78	2.70443031817326e-16\\
79	2.70443031817326e-16\\
80	2.70443031817326e-16\\
81	2.70443031817326e-16\\
82	2.70443031817326e-16\\
83	2.70443031817326e-16\\
84	2.70443031817326e-16\\
85	2.70443031817326e-16\\
86	2.70443031817326e-16\\
87	2.70443031817326e-16\\
88	2.70443031817326e-16\\
89	2.70443031817326e-16\\
90	2.70443031817326e-16\\
91	2.70443031817326e-16\\
92	2.70443031817326e-16\\
93	2.70443031817326e-16\\
94	2.70443031817326e-16\\
95	2.70443031817326e-16\\
96	2.70443031817326e-16\\
97	2.70443031817326e-16\\
98	2.70443031817326e-16\\
99	2.70443031817326e-16\\
100	2.70443031817326e-16\\
101	2.70443031817326e-16\\
102	2.70443031817326e-16\\
103	2.70443031817326e-16\\
104	2.70443031817326e-16\\
105	2.70443031817326e-16\\
106	2.70443031817326e-16\\
107	2.70443031817326e-16\\
108	2.70443031817326e-16\\
109	2.70443031817326e-16\\
110	2.70443031817326e-16\\
111	2.70443031817326e-16\\
112	2.70443031817326e-16\\
113	2.70443031817326e-16\\
114	2.70443031817326e-16\\
115	2.70443031817326e-16\\
116	2.70443031817326e-16\\
117	2.70443031817326e-16\\
118	2.70443031817326e-16\\
119	2.70443031817326e-16\\
120	2.70443031817326e-16\\
121	2.70443031817326e-16\\
122	2.70443031817326e-16\\
123	2.70443031817326e-16\\
124	2.70443031817326e-16\\
125	2.70443031817326e-16\\
126	2.70443031817326e-16\\
127	2.70443031817326e-16\\
128	2.70443031817326e-16\\
129	2.70443031817326e-16\\
130	2.70443031817326e-16\\
131	2.70443031817326e-16\\
132	2.70443031817326e-16\\
133	2.70443031817326e-16\\
134	2.70443031817326e-16\\
135	2.70443031817326e-16\\
136	2.70443031817326e-16\\
137	2.70443031817326e-16\\
138	2.70443031817326e-16\\
139	2.70443031817326e-16\\
140	2.70443031817326e-16\\
141	2.70443031817326e-16\\
142	2.70443031817326e-16\\
143	2.70443031817326e-16\\
144	2.70443031817326e-16\\
145	2.70443031817326e-16\\
146	2.70443031817326e-16\\
147	2.70443031817326e-16\\
148	2.70443031817326e-16\\
149	2.70443031817326e-16\\
150	2.70443031817326e-16\\
151	2.70443031817326e-16\\
152	2.70443031817326e-16\\
153	2.70443031817326e-16\\
154	2.70443031817326e-16\\
155	2.70443031817326e-16\\
156	2.70443031817326e-16\\
157	2.70443031817326e-16\\
158	2.70443031817326e-16\\
159	2.70443031817326e-16\\
160	2.70443031817326e-16\\
161	2.70443031817326e-16\\
162	2.70443031817326e-16\\
163	2.70443031817326e-16\\
164	2.70443031817326e-16\\
165	2.70443031817326e-16\\
166	2.70443031817326e-16\\
167	2.70443031817326e-16\\
168	2.70443031817326e-16\\
169	2.70443031817326e-16\\
170	2.70443031817326e-16\\
171	2.70443031817326e-16\\
172	2.70443031817326e-16\\
173	2.70443031817326e-16\\
174	2.70443031817326e-16\\
175	2.70443031817326e-16\\
176	2.70443031817326e-16\\
177	2.70443031817326e-16\\
178	2.70443031817326e-16\\
179	2.70443031817326e-16\\
180	2.70443031817326e-16\\
181	2.70443031817326e-16\\
182	2.70443031817326e-16\\
183	2.70443031817326e-16\\
184	2.70443031817326e-16\\
185	2.70443031817326e-16\\
186	2.70443031817326e-16\\
187	2.70443031817326e-16\\
188	2.70443031817326e-16\\
189	2.70443031817326e-16\\
190	2.70443031817326e-16\\
191	2.70443031817326e-16\\
192	2.70443031817326e-16\\
193	2.70443031817326e-16\\
194	2.70443031817326e-16\\
195	2.70443031817326e-16\\
196	2.70443031817326e-16\\
197	2.70443031817326e-16\\
198	2.70443031817326e-16\\
199	2.70443031817326e-16\\
200	2.70443031817326e-16\\
201	2.70443031817326e-16\\
202	2.70443031817326e-16\\
203	2.70443031817326e-16\\
204	2.70443031817326e-16\\
205	2.70443031817326e-16\\
206	2.70443031817326e-16\\
207	2.70443031817326e-16\\
208	2.70443031817326e-16\\
209	2.70443031817326e-16\\
210	2.70443031817326e-16\\
211	2.70443031817326e-16\\
212	2.70443031817326e-16\\
213	2.70443031817326e-16\\
214	2.70443031817326e-16\\
215	2.70443031817326e-16\\
216	2.70443031817326e-16\\
217	2.70443031817326e-16\\
218	2.70443031817326e-16\\
219	2.70443031817326e-16\\
220	2.70443031817326e-16\\
221	2.70443031817326e-16\\
222	2.70443031817326e-16\\
223	2.70443031817326e-16\\
224	2.70443031817326e-16\\
225	2.70443031817326e-16\\
226	2.70443031817326e-16\\
227	2.70443031817326e-16\\
228	2.70443031817326e-16\\
229	2.70443031817326e-16\\
230	2.70443031817326e-16\\
231	2.70443031817326e-16\\
232	2.70443031817326e-16\\
233	2.70443031817326e-16\\
234	2.70443031817326e-16\\
235	2.70443031817326e-16\\
236	2.70443031817326e-16\\
237	2.70443031817326e-16\\
238	2.70443031817326e-16\\
239	2.70443031817326e-16\\
240	2.70443031817326e-16\\
241	2.70443031817326e-16\\
242	2.70443031817326e-16\\
243	2.70443031817326e-16\\
244	2.70443031817326e-16\\
245	2.70443031817326e-16\\
246	2.70443031817326e-16\\
247	2.70443031817326e-16\\
248	2.70443031817326e-16\\
249	2.70443031817326e-16\\
250	2.70443031817326e-16\\
251	2.70443031817326e-16\\
252	2.70443031817326e-16\\
253	2.70443031817326e-16\\
254	2.70443031817326e-16\\
255	2.70443031817326e-16\\
256	5.75335761813221e-17\\
};
\end{axis}
\end{tikzpicture}%
	   \caption{Spectrum 3}
	   \label{fig:k33}
    \end{subfigure}
    \caption{Different kernels $K$, the associated TVIR $T$ and the spectrum of the operator $J$. Left column corresponds to example \ref{example1}. Central column corresponds to example \ref{example2}. Right column corresponds to example \ref{example3}.} \label{fig:kernels_illustrations} 
\end{figure}

\subsection{Convolution-product expansions as low-rank approximations} \label{sec:interpretation}

Though similar in spirit, product-convolution \eqref{eq:productconvolution} and convolution-product \eqref{eq:convolutionproduct} expansions have a quite different interpretation captured by the following lemma. 
\begin{lemma}
The TVIR of the product-convolution expansion $T_m$ in \eqref{eq:productconvolution} is given by:
\begin{equation}\label{eq:whatisthat}
 T_m(x,y)= \sum_{k=1}^m h_k(x) w_k(x+y).
\end{equation}
The TVIR of the convolution-product expansion $T_m$ in \eqref{eq:convolutionproduct} is given by:
\begin{equation}\label{eq:lowrank}
 T_m(x,y)= \sum_{k=1}^m h_k(x) w_k(y).
\end{equation}
\end{lemma}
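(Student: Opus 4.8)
The plan is to compute, for each of the two expansions, the integral kernel $K_m$ of the operator $H_m$ by expanding the convolution and the pointwise product, and then to read off the TVIR via the defining relation $T_m(x,y)=K_m(x+y,y)$. This relation is just the inversion of the change of variables $T(x,y)=K(x+y,y)$: setting $x'=x+y$ gives $x=x'-y$ and hence $K(x',y)=T(x'-y,y)$, so passing from a kernel to its TVIR amounts to the substitution $x\mapsto x+y$.

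First I would treat the convolution-product expansion \eqref{eq:convolutionproduct}. Writing out a single term, $(w_k\odot u)(y)=w_k(y)u(y)$ and then $\bigl(h_k\star(w_k\odot u)\bigr)(x)=\int_\Omega h_k(x-y)\,w_k(y)\,u(y)\,dy$. Summing over $k$ and interchanging the finite sum with the integral identifies the kernel as $K_m(x,y)=\sum_{k=1}^m h_k(x-y)\,w_k(y)$. Applying the substitution $x\mapsto x+y$ yields $T_m(x,y)=K_m(x+y,y)=\sum_{k=1}^m h_k(x)\,w_k(y)$, which is exactly \eqref{eq:lowrank}.

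For the product-convolution expansion \eqref{eq:productconvolution} the bookkeeping is identical, except that the pointwise factor $w_k$ now multiplies the \emph{output} of the convolution rather than its input: $\bigl(w_k\odot(h_k\star u)\bigr)(x)=w_k(x)\int_\Omega h_k(x-y)\,u(y)\,dy$, so that $K_m(x,y)=\sum_{k=1}^m w_k(x)\,h_k(x-y)$. The same substitution $x\mapsto x+y$ then gives $T_m(x,y)=\sum_{k=1}^m w_k(x+y)\,h_k(x)$, which is \eqref{eq:whatisthat}.

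There is no genuine obstacle: both identities follow directly from the definition of convolution together with the affine change of variables linking $K$ and $T$, and the only analytic fact invoked (Fubini, to interchange the finite sum and the integral) is trivial for a finite sum. The single point requiring care is tracking which argument carries the pointwise multiplication---the output variable in the product-convolution case versus the input variable in the convolution-product case---since this is precisely what renders the convolution-product TVIR separable (and thus of rank at most $m$, as anticipated by the label \eqref{eq:lowrank}), whereas in the product-convolution TVIR the coupling $w_k(x+y)$ entangles the two variables.
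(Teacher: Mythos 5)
Your proof is correct and follows essentially the same route as the paper's: expand the convolution to identify the kernel $K_m(x,y)$, then apply the substitution $x\mapsto x+y$ defining the TVIR to obtain \eqref{eq:lowrank} and \eqref{eq:whatisthat}. The only difference is cosmetic---you write out the product-convolution case explicitly, whereas the paper proves only \eqref{eq:lowrank} and notes that \eqref{eq:whatisthat} follows by the same argument.
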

\begin{proof}
We only prove \eqref{eq:lowrank} since the proof of \eqref{eq:whatisthat} relies on the same arguments. By definition:
  \begin{align}
  (H_m u)(x) &= \left(\sum_{k=1}^m h_k \star (w_k \odot u)\right)(x) \\
  &= \int_{\Omega} \sum_{k=1}^m  h_k(x-y) w_k(y) u(y) \,dy.  
  \end{align}
  By identification, this yields:
  \begin{equation}
   K_m(x,y) = \sum_{k=1}^m h_k(x-y) w_k(y),
  \end{equation}
  so that 
  \begin{equation}
   T_m(x,y) = \sum_{k=1}^m h_k(x) w_k(y).
  \end{equation}
\end{proof}

As can be seen in \eqref{eq:lowrank}, convolution-product expansions consist of finding low-rank approximations of the TVIR. 
This interpretation was already proposed in \cite{denis2015fast} for instance and is the key observation to derive the forthcoming results. The expansion \eqref{eq:whatisthat} does not share this simple interpretation and we do not investigate it further in this paper. 

\subsection{Discretization} \label{sec:discretization}

In order to implement a convolution-product expansion of type \ref{eq:convolutionproduct}, the problem first needs to be discretized. Discretization is a hard problem in itself and we treat it superficially in this paper with a Galerkin formalism. 
Let $(\varphi_1,\hdots, \varphi_n)$ be a basis of a finite dimensional vector space $V^n$ of $L^2(\Omega)$. Given an operator $H:L^2(\Omega)\to L^2(\Omega)$, we can construct a matrix $\b{H}^n\in \R^{n\times n}$ defined for all $1\leq i,j\leq n$ by $\b{H}^n[i,j] = \langle H\varphi_j, \varphi_i \rangle.$
Let $S^n:H\mapsto \b{H}^n$ denote the discretization operator. From a matrix $\b{H}^n$, an operator $H^n$ can be reconstructed using, for instance, the pseudo-inverse $S^{n,+}$ of $S^n$. We let $H^n=S^{n,+}(\b{H}^n)$.
For instance, if $(\varphi_1,\hdots, \varphi_n)$ is an \emph{orthonormal} basis of $V^n$, the operator $H^n$ is given by:
\begin{equation}
 H^n = S^{n,+}(\b{H}^n) = \sum_{1\leq i,j\leq n} \b{H}^n[i,j] \varphi_i\otimes \varphi_j.
\end{equation}

This paper is dedicated to analyzing methods denoted $\mathcal{A}_m$ that provide an approximation $H_m=\mathcal{A}_m(H)$ of type \eqref{eq:convolutionproduct}, given an input operator $H$. Our analysis provides guarantees on the distance $\|H-H_m\|_{HS}$ depending on $m$ and the regularity properties of the input operator $H$, for different methods. 
Depending on the context, two different approaches can be used to implement $\mathcal{A}_m$.
\begin{itemize}
 \item Compute the matrix $\b{H}_m^n = S^n(H_m)$ using numerical integration procedures. Then create an operator $H_m^n=S^{n,+}(\b{H}_m^n)$. This approach suffers from two defects. First, it is only possible by assuming that the kernel of $H$ is given analytically. Moreover it might be computationally intractable. It is illustrated below.
\begin{center}
\begin{picture}(350,30)
\put(0,0){\framebox(50,20){$H$}}
\put(50,10){\vector(1,0){40}}
\put(60,15) {$\mathcal{A}_m$}
\put(90,0){\framebox(50,20){$H_m$}}
\put(140,10){\vector(1,0){40}}
\put(150,15) {$S^n$}
\put(180,0){\framebox(50,20){$\b{H}_m^n$}}
\put(230,10){\vector(1,0){40}}
\put(240,15) {$S^{n,+}$}
\put(270,0){\framebox(50,20){$H_m^n$}}
\end{picture}
\end{center}

 \item In many applications, the operator $H$ is not given explicitly. Instead, we only have access to its discretization $\b{H}^n$. Then it is possible to construct a discrete approximation algorithm $\b{\mathcal{A}}_m$ yielding a discrete approximation $\b{H}_m^n = \b{\mathcal{A}}_m(\b{H}^n)$. This matrix can then be mapped back to the continuous world using the pseudo-inverse: $H_m^n=S^{n,+}(\b{H}_m^n)$. This is illustrated below. In this paper, we will analyze the construction complexity of $\b{H}_m^n$ using this second approach.
\begin{center}
\begin{picture}(350,30)
\put(0,0){\framebox(50,20){$H$}}
\put(50,10){\vector(1,0){40}}
\put(60,15) {$S^n$}
\put(90,0){\framebox(50,20){$\b{H}^n$}}
\put(140,10){\vector(1,0){40}}
\put(150,15) {$\b{\mathcal{A}}_m$}
\put(180,0){\framebox(50,20){$\b{H}_m^n$}}
\put(230,10){\vector(1,0){40}}
\put(240,15) {$S^{n,+}$}
\put(270,0){\framebox(50,20){$H_m^n$}}
\end{picture}
\end{center} 
\end{itemize}

Ideally, we would like to provide guarantees on $\|H-H_m^n\|_{HS}$ depending on $m$ and $n$. 
In the first approach, this is possible by using the following inequality:
\begin{equation}
 \|H-H_m^n\|_{HS} \leq  \underbrace{\|H - H_m\|_{HS}}_{\epsilon_a(m)} +  \underbrace{\|H_m - H_m^n\|_{HS}}_{\epsilon_d(n)},
\end{equation}
where $\epsilon_a(m)$ is the approximation error studied in this paper and $\epsilon_d(n)$ is the discretization error. 
%In all the paper, we assume that $\epsilon_d(n)$ is negligible compared to $\epsilon_a(m)$.

In the second approach, the error analysis is more complex since there is an additional bias due to the  algorithm discretization.
This bias is captured by the following inequality:
\begin{equation}
 \|H-H_m^n\|_{HS} \leq  \underbrace{\|H - H^n\|_{HS}}_{\epsilon_d(n)} +  \underbrace{\|H^n - \mathcal{A}_m(H^n) \|_{HS}}_{\epsilon_a(m)} + \underbrace{\|\mathcal{A}_m(H^n) - H_m^n\|_{HS}}_{\epsilon_b(m,n)}.
\end{equation}
The bias $\epsilon_b(m,n) = \|\mathcal{A}_m(S^{n,+}(S^n(H))) - S^{n,+}( \b{\mathcal{A}}_m( S^n(H))) \|_{HS}$ accounts for the difference between using the discrete or continuous approximation algorithm.

\emph{In all the paper, we assume - without mention - that $\epsilon_d(n)$ and $\epsilon_b(m,n)$ are negligible compared to $\epsilon_a(m)$.}

% 
% i.e. that $S\circ \b{A_m} = A_m\circ S$. Under this assumption + $S$ bounded + $\|H-H_m\| \leq \epsilon$, we would get:
% \begin{align}
%   \|\b{H} - \b{H_m}\| &= \|S(H) - S(H_m)\| \\
%   &\leq \|S\| \|H-H_m\| \leq C\epsilon,
% \end{align}
% that's to say a control on $\|H-H_m\|$ directly provides a control on $\|\b{H} - \b{H_m}\|$, or more precisely on $\|\b{H} - \b{A_m}(\b{H})\|$. It seems to me that this is what we want: given a discrete approximation procedure $\b{A_m}$, say something about $\|\b{H} - \b{A_m}(\b{H})\|$, knowing that we have a control on $\|H-H_m\|$...
% 
% The same reasoning can be applied if instead, we manage to control $\|S\circ A_m - \b{A_m}\circ S\|$ and make it small for a certain construction. If we follow this approach, the whole problem is therefore to:
% \begin{itemize}
%  \item Prove that $S$ is bounded ($S$ could be constructed by Galerkin for instance). I think that this step is easy.
%  \item Control $\|S\circ A_m - \b{A_m}\circ S\|$. Here I'm very unsure of what can be done...
% \end{itemize}

\subsection{Implementation and complexity}

Let $\b{F}_n\in \C^{n\times n}$ denote the discrete inverse Fourier transform and $\b{F}_n^*$ denote the discrete Fourier transform.
Matrix-vector products $\b{F}_n\b{u}$ or $\b{F}_n^*\b{u}$ can be evaluated in $O(n\log_2(n))$ operations using the fast Fourier transform (FFT). 
The discrete convolution product $\b{v}=\b{h}\star \b{u}$ is defined for all $i\in \Z$ by $\b{v}[i] = \sum_{j=1}^n \b{u}[i-j]\b{h}[j]$, with circular boundary conditions. 

Discrete convolution products can be evaluated in $O(n\log_2(n))$ operations by using the following fundamental identity:
\begin{equation}
 \b{v} = \b{F}_n \cdot ( (\b{F}_n^*\b{h}) \odot (\b{F}_n^*\b{u})).
\end{equation}
Hence a convolution can be implemented using three FFTs ($O(n\log_2(n))$ operations) and a point-wise multiplication ($O(n)$ operations).
This being said, it is straightforward to implement formula \eqref{eq:convolutionproduct} with an $O(mn\log_2(n))$ algorithm. 

Under the additional assumption that $w_k$ and $h_k$ are supported on bounded intervals, the complexity can be improved.
We assume that, after discretization, $\b{h}_k$ and $\b{w}_k$ are compactly supported, with support length $q_k\leq n$ and $p_k\leq n$ respectively.
\begin{lemma} \label{lem:complexity}
 A matrix-vector product of type \eqref{eq:convolutionproduct} can be implemented with a complexity that does not exceed $O\left( \sum_{k=1}^m (p_k+q_k) \log_2(\min(p_k,q_k)) \right)$ operations.
\end{lemma}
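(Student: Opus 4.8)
The goal is to bound the cost of evaluating $H_m u = \sum_{k=1}^m h_k \star (w_k \odot u)$ term by term, exploiting the fact that each $\b{h}_k$ and $\b{w}_k$ is compactly supported. The plan is to analyze the cost of a single summand $\b{h}_k \star (\b{w}_k \odot \b{u})$ and then sum over $k$, since the total complexity is additive in the $m$ terms.

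First I would observe that the pointwise multiplication $\b{w}_k \odot \b{u}$ costs only $O(p_k)$ operations rather than $O(n)$: since $\b{w}_k$ is supported on an interval of length $p_k$, the product $\b{w}_k \odot \b{u}$ vanishes outside that interval and only the $p_k$ nonzero entries of $\b{w}_k$ need to be multiplied against the corresponding entries of $\b{u}$. Consequently the intermediate vector $\b{z}_k = \b{w}_k \odot \b{u}$ is itself supported on an interval of length $p_k$.

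The main step is to cost the convolution $\b{h}_k \star \b{z}_k$, where $\b{h}_k$ has support length $q_k$ and $\b{z}_k$ has support length $p_k$. The key idea is that the convolution of two compactly supported signals of lengths $q_k$ and $p_k$ produces an output supported on an interval of length at most $p_k + q_k - 1$, and such a convolution need not be carried out on the full circle of size $n$. Instead one restricts to a window of length $O(p_k + q_k)$ and computes the convolution there, either by the FFT-based identity on a grid of size $O(p_k+q_k)$, or by directly convolving the short filter against the short signal. Using the FFT on a block of size $\asymp p_k + q_k$ gives a cost of $O\left((p_k+q_k)\log_2(p_k+q_k)\right)$; the sharper $\log_2(\min(p_k,q_k))$ factor claimed in the statement comes from the standard overlap–add (or overlap–save) technique, in which the longer signal is partitioned into blocks of length comparable to the shorter signal and each block is convolved with the shorter one via an FFT of size $O(\min(p_k,q_k))$. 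There are $O(\max(p_k,q_k)/\min(p_k,q_k))$ such blocks, each costing $O(\min(p_k,q_k)\log_2(\min(p_k,q_k)))$, for a total of $O\left((p_k+q_k)\log_2(\min(p_k,q_k))\right)$ per term.

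I expect the main obstacle to be the bookkeeping for the $\log_2(\min(p_k,q_k))$ factor, i.e. making the overlap–add argument precise: one must verify that the number of blocks times the per-block FFT cost indeed collapses to the stated bound, and handle the circular boundary conditions so that wrapping on the size-$n$ circle does not force a full-size FFT. Once the per-term bound $O\left((p_k+q_k)\log_2(\min(p_k,q_k))\right)$ is established, summing over $k=1,\dots,m$ yields the claimed total complexity $O\left(\sum_{k=1}^m (p_k+q_k)\log_2(\min(p_k,q_k))\right)$, which completes the argument.
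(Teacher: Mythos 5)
Your proposal is correct and takes essentially the same route as the paper: each term is handled by restricting the convolution to a window of length $O(p_k+q_k)$ (since the output $\b{h}_k\star(\b{w}_k\odot\b{u})$ has at most $p_k+q_k$ contiguous nonzero entries), and the sharper $\log_2(\min(p_k,q_k))$ factor is obtained by sectioning. The overlap--add blocking you spell out is precisely the Stockham sectioning algorithm that the paper invokes by citation, so your write-up merely supplies the details the paper delegates to the reference.
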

\begin{proof}
A convolution product of type $\b{h}_k\star (\b{w}_k\odot \b{u})$ can be evaluated in $O((p_k+q_k) \log(p_k+q_k))$ operations. Indeed, the support of $\b{h}_k\star (\b{w}_k\odot \b{u})$ has no more than $p_k+q_k$ contiguous non-zeros elements. Using the Stockham sectioning algorithm \cite{stockham1966high}, the complexity can be further decreased to $O((p_k+q_k) \log_2(\min(p_k,q_k)))$ operations. This idea was proposed in \cite{hirsch2010efficient}.
\end{proof}
\section{Projections on linear subspaces}\label{sec:linearsubspace}

We now turn to the problem of choosing the functions $h_k$ and $w_k$ in equation \eqref{eq:convolutionproduct}. The idea studied in this section is to fix a subspace $E_m=\mathrm{span}(e_k, k\in \{1,\hdots, m\})$ of $L^2(\Omega)$ and to approximate $T(x,\cdot)$ as:
\begin{equation}\label{eq:decomposition_linear_subspace}
 T_m(x,y) = \sum_{k=1}^m c_k(x) e_k(y).
\end{equation}
For instance, the coefficients $c_k$ can be chosen so that $T_m(x,\cdot)$ is a projection of $T(x,\cdot)$ onto $E_m$.
We propose to analyze three different different family of functions $e_k$: Fourier atoms, wavelets atoms and B-splines. 
We analyze their complexity and approximation properties as well as their respective advantages.

\subsection{Fourier decompositions}\label{sec:Fourier}

It is well known that functions in  $H^s(\Omega)$ can be well approximated by linear combination of low-frequency Fourier atoms. 
This loose statement is captured by the following lemma.
\begin{lemma}[{\cite{devore1993constructive,devore1998nonlinear}}]\label{lem:approx_rate_Fourier}
 Let $f\in H^s(\Omega)$ and $f_m$ denote its partial Fourier series:
 \begin{equation}
  f_m=\sum_{k=-m}^m \hat f[k] e_k,
 \end{equation}
 where $e_k(y) =\exp(-2 i \pi k y)$. Then 
 \begin{equation}
  \|f_m-f\|_{L^2(\Omega)} \leq C m^{-s} |f|_{H^s(\Omega)}.
 \end{equation}
\end{lemma}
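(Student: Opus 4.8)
The plan is to reduce the $L^2$ approximation error to a tail sum of Fourier coefficients and then to control that tail by the Sobolev semi-norm, which itself has an exact expression in Fourier variables. Since $(e_k)_{k\in\Z}$ is a Hilbert basis of $L^2(\Omega)$ and $f_m$ is precisely the partial sum retaining the frequencies $|k|\leq m$, orthonormality (Parseval's identity) gives immediately
\begin{equation}
\|f-f_m\|_{L^2(\Omega)}^2 = \sum_{|k|>m} |\hat f[k]|^2.
\end{equation}
So the entire problem reduces to showing that this tail decays like $m^{-2s}|f|_{H^s(\Omega)}^2$.

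Next I would express the semi-norm in Fourier variables. Differentiating the Fourier expansion term by term shows that the $s$-th weak derivative $f^{(s)}$ has Fourier coefficients $(2i\pi k)^s \hat f[k]$, so that by Parseval
\begin{equation}
|f|_{H^s(\Omega)}^2 = \|f^{(s)}\|_{L^2(\Omega)}^2 = (2\pi)^{2s}\sum_{k\in\Z} |k|^{2s}\,|\hat f[k]|^2.
\end{equation}
This is the semi-norm analogue of the full-norm characterization in Lemma~\ref{lem:Characterization_Fourier_Sobolev}. The key elementary observation is then that for every index in the tail, $|k|>m$, one has $|k|^{2s}/m^{2s}\geq 1$, hence
\begin{equation}
\sum_{|k|>m}|\hat f[k]|^2 \leq \frac{1}{m^{2s}}\sum_{|k|>m} |k|^{2s}\,|\hat f[k]|^2 \leq \frac{1}{m^{2s}}\sum_{k\in\Z}|k|^{2s}\,|\hat f[k]|^2 = \frac{(2\pi)^{-2s}}{m^{2s}}\,|f|_{H^s(\Omega)}^2.
\end{equation}
Combining the displays and taking square roots yields $\|f-f_m\|_{L^2(\Omega)}\leq C m^{-s}|f|_{H^s(\Omega)}$ with $C=(2\pi)^{-s}$.

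Honestly there is no serious obstacle here: the argument is a one-line tail estimate once the two Parseval computations are in place. The only point requiring a little care is the Fourier representation of the semi-norm. One must justify the term-by-term differentiation of the Fourier series, which is legitimate because $f\in H^s(\Omega)\subset L^2(\Omega)$ guarantees $\sum_k |k|^{2s}|\hat f[k]|^2<+\infty$, so the differentiated series converges in $L^2(\Omega)$; and one should fix the sign and normalization convention for $\hat f$ consistently with the definition of $e_k$ used in the paper. If one prefers to avoid pinning down the exact constant, it suffices to invoke the equivalence $|f|_{H^s(\Omega)}^2 \asymp \sum_{k\in\Z}|k|^{2s}|\hat f[k]|^2$ directly, which only changes the value of $C$ and leaves the rate $m^{-s}$ untouched.
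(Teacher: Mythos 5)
Your proof is correct, and in fact the paper offers no proof of this lemma at all: it is quoted from the cited references (\cite{devore1993constructive,devore1998nonlinear}), so there is no in-paper argument to compare against. What you wrote is the standard argument those references supply: Parseval reduces the error to the tail $\sum_{|k|>m}|\hat f[k]|^2$, the Fourier expression of the semi-norm converts $|f|_{H^s(\Omega)}^2$ into $(2\pi)^{2s}\sum_k |k|^{2s}|\hat f[k]|^2$, and the pointwise bound $|k|^{2s}\geq m^{2s}$ on tail indices finishes it, even yielding the explicit constant $C=(2\pi)^{-s}$. One small tightening: your justification of the semi-norm identity via term-by-term differentiation is mildly circular as stated, since the finiteness of $\sum_k|k|^{2s}|\hat f[k]|^2$ is precisely the Fourier characterization you are in the course of deriving from $f\in H^s(\Omega)$. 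It is cleaner to avoid series manipulation entirely: since $e_k$ is smooth and periodic, the definition of the weak derivative gives, after $s$ integrations by parts with no boundary terms,
\begin{equation}
\widehat{f^{(s)}}[k]=\int_{\Omega}f^{(s)}(x)\,e_k(x)\,dx=(-1)^s\int_{\Omega}f(x)\,e_k^{(s)}(x)\,dx=(2i\pi k)^s\,\hat f[k]
\end{equation}
up to the sign fixed by the convention $e_k(x)=\exp(-2i\pi kx)$, and then Parseval applied to $f^{(s)}\in L^2(\Omega)$ gives the identity directly. Your fallback of invoking the norm equivalence of Lemma~\ref{lem:Characterization_Fourier_Sobolev} (stated in the paper without proof) is equally legitimate and, as you note, only changes the constant. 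Everything else is exactly right.
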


The so-called Kohn-Nirenberg symbol $N$ of $H$ is defined for all $(x,k)\in \Omega\times \Z$ by
\begin{equation}
 N(x,k) = \int_{\Omega} T(x,y) \exp(-2i\pi ky) \,dy.
\end{equation} 
Illustrations of different Kohn-Nirenberg symbols are provided in figure \ref{fig:Kohn_Nirenberg}.

\begin{corollary}\label{thm:approx_Fourier}
Set $e_k(y) =\exp(-2 i \pi k y)$ and define $T_m$ by:
\begin{equation} \label{eq:Fourier_form}
 T_m(x,y)=\sum_{|k|\leq m} N(x,k) e_k(y).
\end{equation}
Then, under assumptions \ref{assumption1} and \ref{assumption2}
\begin{equation}\label{eq:Fourier_rate}
\|H_m - H\|_{HS}  \leq C \sqrt{\kappa}m^{-s}.
\end{equation}
\end{corollary}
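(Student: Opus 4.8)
The plan is to reduce this operator estimate to the one-dimensional Fourier approximation bound of Lemma \ref{lem:approx_rate_Fourier}, applied slice-by-slice in $x$, and then to integrate. The first step is to note that the Hilbert-Schmidt norm is insensitive to the shearing change of variables $(x,y)\mapsto(x+y,y)$ that relates a kernel to its TVIR. This map is a measure-preserving bijection of $\Omega\times\Omega$, so exactly as in the computation establishing $\|H\|_{HS}^2=\int_\Omega\int_\Omega T^2$, applying it to the operator $H_m-H$ (whose kernel is $K_m-K$ and whose TVIR is $T_m-T$) gives
\begin{equation*}
\|H_m-H\|_{HS}^2=\int_\Omega\int_\Omega \bigl(T_m(x,y)-T(x,y)\bigr)^2\,dx\,dy.
\end{equation*}

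Next I would identify $T_m(x,\cdot)$ as a genuine partial Fourier series. Since $N(x,k)=\int_\Omega T(x,y)e_k(y)\,dy$ is precisely the $k$-th Fourier coefficient $\widehat{T(x,\cdot)}[k]$ of the function $y\mapsto T(x,y)$, formula \eqref{eq:Fourier_form} expresses $T_m(x,\cdot)=\sum_{|k|\le m}\widehat{T(x,\cdot)}[k]\,e_k$ as the order-$m$ partial Fourier sum of $T(x,\cdot)$. By Fubini the double integral equals $\int_\Omega\|T_m(x,\cdot)-T(x,\cdot)\|_{L^2(\Omega)}^2\,dx$, and for each fixed $x$ — legitimate since Assumption \ref{assumption1} guarantees $T(x,\cdot)\in H^s(\Omega)$ — Lemma \ref{lem:approx_rate_Fourier} yields the pointwise bound $\|T_m(x,\cdot)-T(x,\cdot)\|_{L^2(\Omega)}\le C m^{-s}|T(x,\cdot)|_{H^s(\Omega)}$.

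The final step is to square, integrate in $x$, and extract the $\sqrt{\kappa}$ factor. This gives
\begin{equation*}
\|H_m-H\|_{HS}^2\le C^2 m^{-2s}\int_\Omega |T(x,\cdot)|_{H^s(\Omega)}^2\,dx.
\end{equation*}
Here Assumption \ref{assumption2} forces $T(x,\cdot)\equiv 0$, hence $|T(x,\cdot)|_{H^s(\Omega)}=0$, whenever $|x|>\kappa/2$, so the integral effectively runs over a set of measure at most $\kappa$; on that set Assumption \ref{assumption1} bounds the integrand by $\sup_{x}\|T(x,\cdot)\|_{H^s(\Omega)}^2\le C^2$. Therefore $\int_\Omega|T(x,\cdot)|_{H^s(\Omega)}^2\,dx\le C^2\kappa$, and combining the estimates and taking square roots yields $\|H_m-H\|_{HS}\le C\sqrt{\kappa}\,m^{-s}$, which is \eqref{eq:Fourier_rate}.

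None of these steps is technically deep: the argument is essentially "integrate the one-dimensional rate." The only point demanding care is the opening reduction, namely recognizing that the Hilbert-Schmidt norm of the difference operator is the $L^2$ norm of $T_m-T$ rather than of $K_m-K$, together with the bookkeeping that makes $\kappa$ (the support length in the $x$ variable) enter only through the \emph{domain} of integration and not through the per-slice Sobolev seminorm. I expect that bookkeeping, rather than any single estimate, to be the only place a reader might slip.
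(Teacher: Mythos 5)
Your proposal is correct and follows essentially the same route as the paper's own proof: identify $N(x,k)$ as the Fourier coefficients of $T(x,\cdot)$, apply Lemma \ref{lem:approx_rate_Fourier} slice-by-slice under Assumption \ref{assumption1}, and use Assumption \ref{assumption2} to confine the $x$-integration to a set of measure $\kappa$ before integrating the squared per-slice bound. Your version merely spells out details the paper leaves implicit (the measure-preserving shear identifying $\|H_m-H\|_{HS}$ with $\|T_m-T\|_{L^2(\Omega\times\Omega)}$, and that the error vanishes for $|x|>\kappa/2$ because the lemma's bound carries the seminorm $|T(x,\cdot)|_{H^s(\Omega)}$, which is zero there).
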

\begin{proof}
By lemma \ref{lem:approx_rate_Fourier} and assumption \ref{assumption1}, $\|T_m(x,\cdot)-T(x,\cdot) \|_{L^2(\Omega)} \leq C m^{-s}$ for some constant $C$ and for all $x \in \Omega$. In addition, by assumption \ref{assumption2}, $\|T_m(x,\cdot)-T(x,\cdot) \|_{L^2(\Omega)}=0$ for $|x|>\kappa/2$.
Therefore:
\begin{align}
\|H_m-H\|_{HS}^2 & = \int_\Omega \int_\Omega (T_m(x,y) - T(x,y))^2\, dx\,dy \\
& = \int_\Omega  \| T_m(x,\cdot) - T(x,\cdot) \|_{L^2(\Omega)}^2 \,dx \\
& \leq  \kappa C^2m^{-2s}\,dx \\
\end{align}
\end{proof}

As will be seen later, the convergence rate \eqref{eq:Fourier_rate} is optimal in the sense that no convolution-product expansion of order $m$ can achieve a better rate under the sole assumptions \ref{assumption1} and \ref{assumption2}.
\begin{corollary}
Let $\epsilon>0$  and set $m = \lceil C \epsilon^{-1/s} \kappa^{1/2s}\rceil$. Under assumptions \ref{assumption1} and \ref{assumption2}, $H_m$ satisfies $\| H - H_m \|_{HS} \leq \epsilon$ and products with $H_m$  and $H_m^*$ can be evaluated with no more than $O(\kappa^{1/2s} n \log n\epsilon^{-1/s} )$ operations.
\end{corollary}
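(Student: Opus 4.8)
The plan is to combine the approximation rate of Corollary~\ref{thm:approx_Fourier} with the per-convolution FFT cost, using the prescribed value of $m$ as the bridge between accuracy and complexity. Neither ingredient is new: the error estimate is just an inversion of \eqref{eq:Fourier_rate}, and the operation count is a matter of summing the cost of $O(m)$ fast convolutions.

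For the error bound, I would simply invert the rate. Writing $C_0$ for the constant appearing in \eqref{eq:Fourier_rate}, we have $\|H_m - H\|_{HS} \leq C_0\sqrt{\kappa}\,m^{-s}$, so the requirement $\|H_m - H\|_{HS}\leq \epsilon$ is met as soon as $m \geq (C_0\sqrt{\kappa}/\epsilon)^{1/s} = C_0^{1/s}\kappa^{1/2s}\epsilon^{-1/s}$. Choosing the generic constant $C$ in the statement equal to $C_0^{1/s}$ and taking $m=\lceil C\kappa^{1/2s}\epsilon^{-1/s}\rceil$ only enlarges $m$, which can only decrease the error; hence the bound holds. This step is entirely routine.

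For the complexity, I would count FFTs. The expansion \eqref{eq:Fourier_form} is a convolution-product expansion with $2m+1 = O(m)$ terms, where $h_k = N(\cdot,k)$ and $w_k = e_k$. Each term $h_k\star(w_k\odot u)$ requires one pointwise multiplication followed by one circular convolution, costing $O(n\log_2 n)$ operations via the FFT identity recalled in Section~\ref{sec:preliminaries} (three transforms plus an $O(n)$ Hadamard product). Summing over the $O(m)$ terms gives $O(mn\log_2 n)$, and substituting $m = O(\kappa^{1/2s}\epsilon^{-1/s})$ yields exactly $O(\kappa^{1/2s} n\log n\,\epsilon^{-1/s})$.

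The one genuinely non-automatic point, which I expect to be the main obstacle, is the adjoint $H_m^*$: since $H_m$ is in convolution-product form, its adjoint is not of the same type. However, the adjoint of a circular convolution by $h_k$ is again a circular convolution (by the conjugate-reversed filter) and the adjoint of multiplication by $w_k$ is multiplication by $\overline{w_k}$, so $H_m^* v = \sum_k \overline{w_k}\odot(\tilde h_k \star v)$ is a product-convolution involving the same number of FFTs and pointwise products as $H_m$. It therefore obeys the same $O(mn\log_2 n)$ bound, and the conclusion follows. One could instead invoke Lemma~\ref{lem:complexity} with support lengths $q_k \asymp \kappa n$ and $p_k = n$ to sharpen $\log n$ to $\log(\kappa n)$, but since $\kappa\leq 1$ this refinement is unnecessary for the stated bound.
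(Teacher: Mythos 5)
Your proposal is correct and follows essentially the same route as the paper: invert the rate \eqref{eq:Fourier_rate} to fix $m$, then observe that since Fourier atoms are not localized ($p_k = n$) each of the $O(m)$ terms costs one FFT-based convolution, giving $O(mn\log n)$ total, exactly as the paper obtains via Lemma~\ref{lem:complexity}. Your explicit verification that $H_m^*$ is a product-convolution expansion with the same operation count is a welcome addition the paper leaves implicit.
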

\begin{proof}
Since Fourier atoms are not localized in the time domain, the modulation functions $\b{w}_k$ are supported on intervals of size $p = n$. 
The complexity of computing a matrix vector product is therefore $O( m n \log(n) )$ operations by lemma \ref{lem:complexity}.
\end{proof}

Finally, let us mention that computing the discrete Kohn-Nirenberg $\b{N}$ costs $O(\kappa n^2\log_2(n))$ operations ($\kappa n$ discrete Fourier transforms of size $n$). The storage cost of this Fourier representation is $O(m\kappa n)$ since one has to store $\kappa n$ coefficients for each of the $m$ vectors $\b{h}_k$.

In the next two sections, we show that replacing Fourier atoms by wavelet atoms or B-splines preserves the optimal rate of convergence in $O(\sqrt{\kappa} m^{-s})$, but has the additional advantage of being localized in space, thereby reducing complexity.

\begin{figure}
    \centering
    \begin{subfigure}[b]{0.3\textwidth}
        \includegraphics[width=\textwidth]{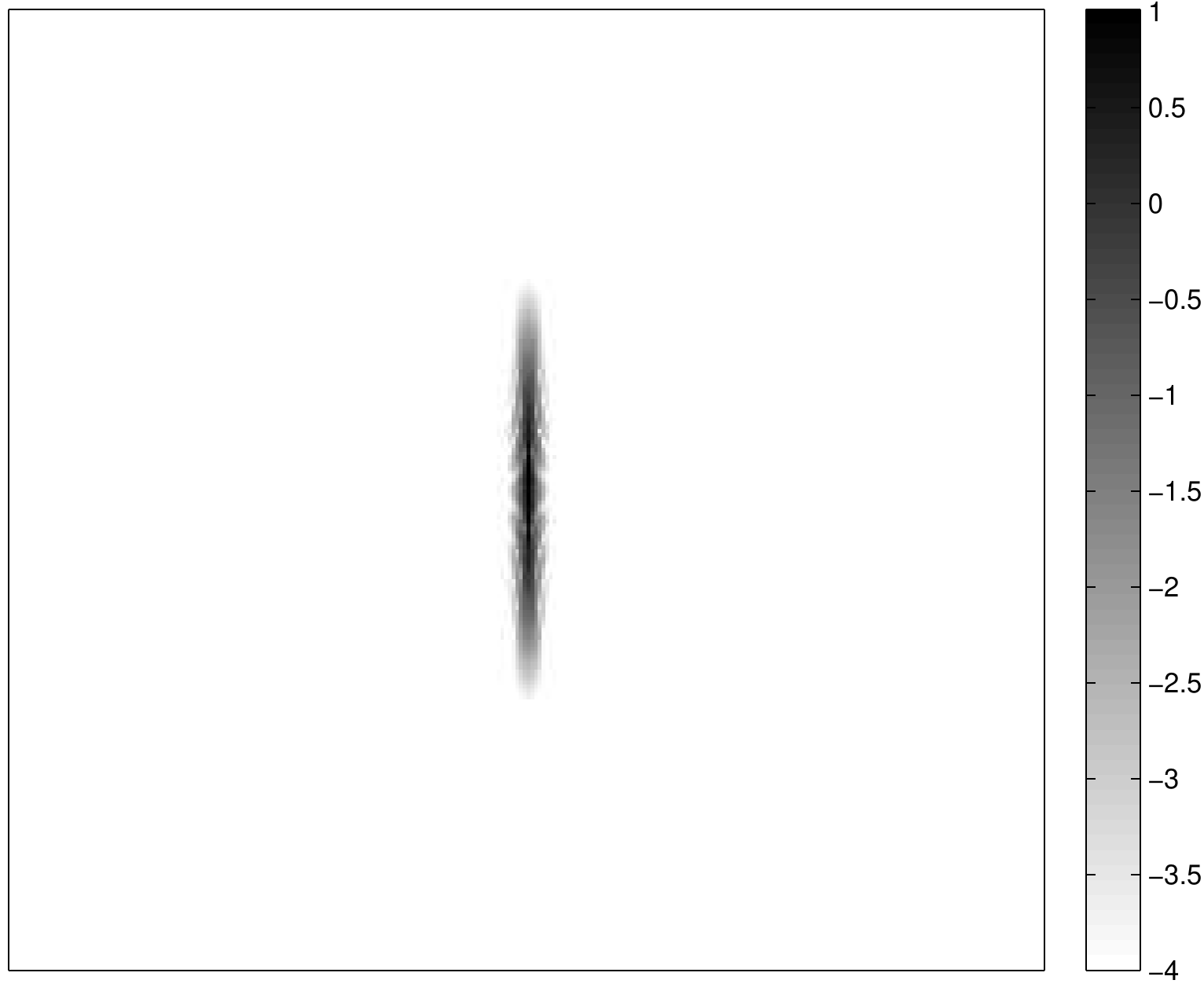}
        \caption{Kernel 1}
        \label{fig:k14}
    \end{subfigure}
    ~ %add desired spacing between images, e. g. ~, \quad, \qquad, \hfill etc. 
      %(or a blank line to force the subfigure onto a new line)
    \begin{subfigure}[b]{0.3\textwidth}
        \includegraphics[width=\textwidth]{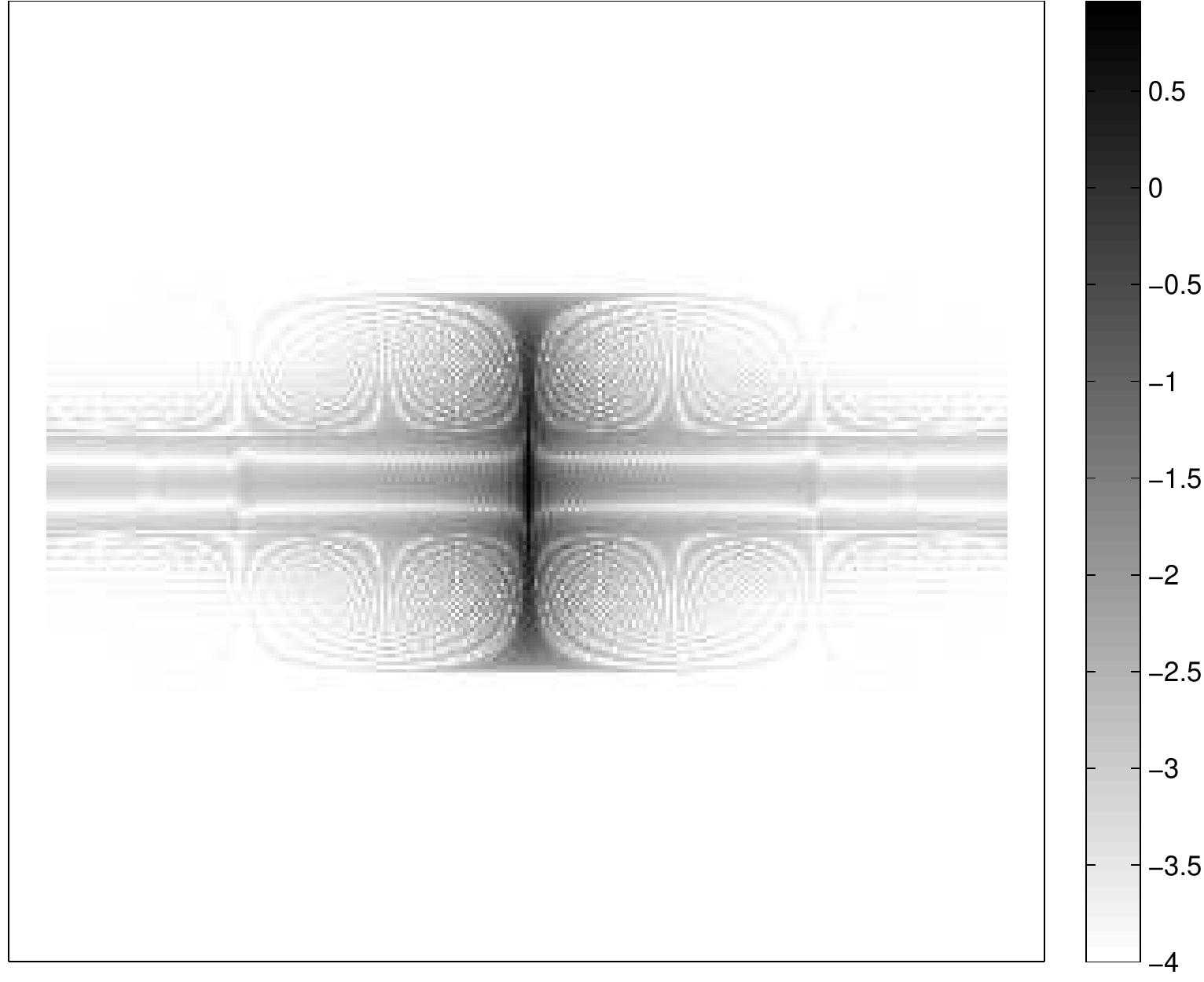}
        \caption{Kernel 2}
        \label{fig:k24}
    \end{subfigure}
    ~ %add desired spacing between images, e. g. ~, \quad, \qquad, \hfill etc. 
    %(or a blank line to force the subfigure onto a new line)
    \begin{subfigure}[b]{0.3\textwidth}
        \includegraphics[width=\textwidth]{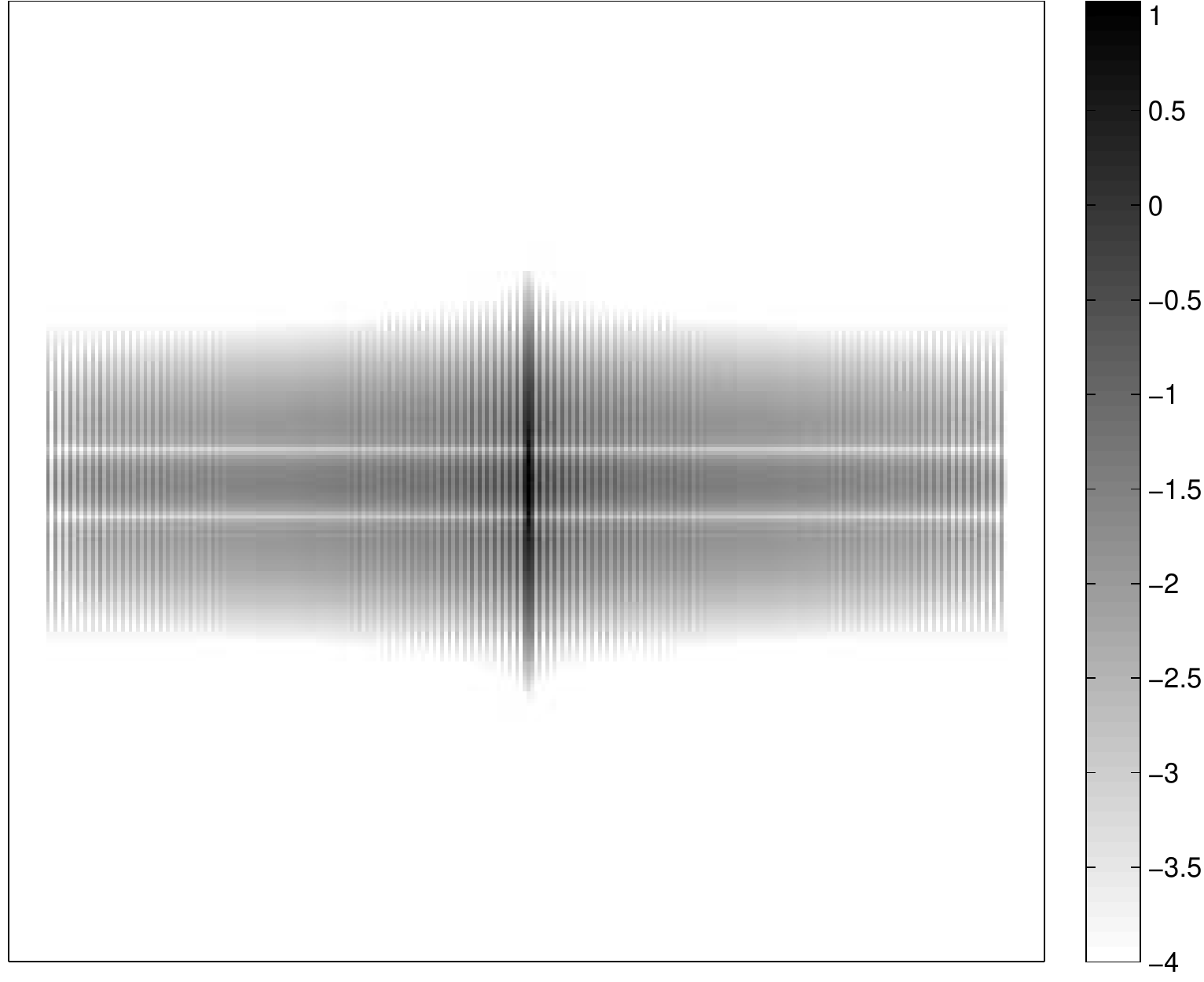}
        \caption{Kernel 3}
        \label{fig:k34}
    \end{subfigure}
    \caption{Kohn-Nirenberg symbols of the kernels given in examples \ref{example1}, \ref{example2} and \ref{example3} in $\log_{10}$ scale. Observe how the decay speed from the center (low frequencies) to the outer parts (high frequencies) changes depending on the TVIR smoothness. Note: the lowest values of the Kohn-Nirenberg symbol have been set to $10^{-4}$ for visualization purposes. } \label{fig:Kohn_Nirenberg} 
\end{figure}

\subsection{Spline decompositions}\label{sec:Spline}

\begin{theorem}[{\cite[p. 87]{bezhaev2001variational} or \cite[p. 420]{devore1993constructive}}]\label{lem:splineapprox}
 Let $f \in H^s(\Omega)$ and define its projection on $\mathcal{B}_{\alpha,m}$ by:
 \begin{align}
  f_m & = \argmin_{\tilde f \in \mathcal{B}_{\alpha,m}} \| \tilde f -f \|_2^2.
 \end{align}
If $\alpha\geq s$, then 
  \begin{equation}
    \| f - f_m \|_2 \leq C \sqrt{\kappa} m^{-s} \| f \|_{W^{s,2}}.
 \end{equation}
\end{theorem}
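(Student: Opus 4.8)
The plan is to exploit the fact that $f_m$ is the $L^2(\Omega)$-orthogonal projection of $f$ onto the finite-dimensional (hence closed) subspace $\mathcal{B}_{\alpha,m}$. By the best-approximation property, $\|f-f_m\|_2 \leq \|f-g\|_2$ for \emph{every} $g\in\mathcal{B}_{\alpha,m}$, so it suffices to exhibit a single good approximant and bound its error. The natural candidate is a quasi-interpolant $Q_m f\in\mathcal{B}_{\alpha,m}$, that is, a local linear operator of Schoenberg / de Boor--Fix type whose coefficients are built from local averages of $f$. The three properties I would demand of $Q_m$ are: (i) it reproduces polynomials of degree $<s$, which is possible because, by Definition \ref{def:b-spline}, order-$\alpha$ B-splines reproduce polynomials up to degree $\alpha$ and $\alpha\geq s$; (ii) it is \emph{local}, in the sense that $(Q_m f)$ restricted to the grid cell $I_j=[j/m,(j+1)/m]$ depends only on the values of $f$ on a fixed neighbourhood $\tilde I_j$ of $I_j$; and (iii) it is bounded on $L^2$ uniformly in $m$.

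First I would establish a \emph{local} estimate on each grid cell $I_j$ of length $h=1/m$. Since $Q_m$ reproduces polynomials of degree $<s$ and is locally $L^2$-bounded, a dilation reducing $I_j$ to a reference cell together with the Bramble--Hilbert lemma yields
\begin{equation}
 \|f-Q_m f\|_{L^2(I_j)} \leq C\, h^s\, |f|_{H^s(\tilde I_j)},
\end{equation}
where $\tilde I_j$ is the union of the supports of the finitely many B-splines that are nonzero on $I_j$; its width is $O(\alpha h)$ and is independent of $j$. I would then globalize by squaring and summing over $j$. The key combinatorial point is that the enlarged cells $\tilde I_j$ have \emph{finite overlap}: every point of $\Omega$ lies in at most $O(\alpha)$ of them, a number depending only on the spline order. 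Hence
\begin{equation}
 \|f-Q_m f\|_{L^2(\Omega)}^2 = \sum_j \|f-Q_m f\|_{L^2(I_j)}^2 \leq C\, h^{2s}\sum_j |f|_{H^s(\tilde I_j)}^2 \leq C\, h^{2s}\, |f|_{H^s(\Omega)}^2,
\end{equation}
which, combined with the best-approximation property and $|f|_{H^s(\Omega)}\leq \|f\|_{W^{s,2}}$, gives $\|f-f_m\|_2\leq C m^{-s}\|f\|_{W^{s,2}}$.

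The $\sqrt{\kappa}$ refinement is best understood as bookkeeping of the support hypothesis rather than as a feature of approximating one fixed $f$: when $f$ vanishes outside an interval of length $\kappa$ (Assumption \ref{assumption2}), only $O(\kappa m)$ cells $I_j$ contribute to the sum above, and tracking this localization — together with the integration $\int_{|x|\leq\kappa/2}dx$ performed at the operator level, exactly as in the proof of Corollary \ref{thm:approx_Fourier} — is what converts the plain Jackson bound into one carrying the $\sqrt{\kappa}$ factor.

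The hard part will be the construction of a quasi-interpolant possessing \emph{all three} properties simultaneously — polynomial reproduction up to degree $<s$, strict locality, and $m$-uniform $L^2$-boundedness — and periodizing it cleanly on the circle $\Omega=\R\backslash\Z$, where the cells near the wrap-around use the periodized cardinal splines $B_{\alpha,m}(\cdot-k/m)$ rather than splines on an interval. A secondary technical point is non-integer smoothness $s$: there the Bramble--Hilbert step must be used in its fractional-Sobolev form (reproducing polynomials of degree $\lceil s\rceil-1$ and interpolating the two endpoint estimates), and the finite-overlap summation of the Gagliardo seminorm requires more care than the integer case, though neither changes the exponent $s$.
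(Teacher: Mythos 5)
The paper offers no proof of this statement at all: it is imported verbatim from the cited references (Bezhaev--Vasilenko and DeVore--Lorentz), and your argument --- best-approximation property of the $L^2$ projection, a local de Boor--Fix type quasi-interpolant reproducing polynomials of degree $<s$ (available since $\alpha\geq s$), Bramble--Hilbert scaling on cells of width $1/m$, and finite-overlap summation --- is precisely the standard proof found in those references, so in substance you have correctly reconstructed the outsourced proof. Two remarks. First, the $\sqrt{\kappa}$ in the statement is an artifact of the paper's phrasing: $\kappa$ appears in no hypothesis of the theorem, the honest single-function result is the plain Jackson bound $\|f-f_m\|_2\leq C\,m^{-s}\|f\|_{W^{s,2}}$ that your argument delivers, and the factor $\sqrt{\kappa}$ genuinely enters only at the operator level in Corollary~\ref{cor:spline_rate}, through the integration over $x\in[-\kappa/2,\kappa/2]$ exactly as in Corollary~\ref{thm:approx_Fourier} --- which your final paragraph correctly identifies. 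However, your intermediate claim that the support hypothesis, via ``only $O(\kappa m)$ cells contribute,'' converts the Jackson bound into one carrying $\sqrt{\kappa}$ does not work: the Sobolev seminorm already localizes, so restricting the sum over cells still yields $C\,m^{-s}|f|_{H^s(\Omega)}$ with no $\sqrt{\kappa}$ gain against $\|f\|_{W^{s,2}}$; you should delete that sentence and keep only the operator-level explanation. Second, since the paper defines $H^s(\Omega)$ for integer $s$ (weak derivatives up to order $s$), your closing concern about fractional-order Bramble--Hilbert estimates, while sensible in general, is moot here.
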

The following result directly follows.

\begin{corollary} \label{cor:spline_rate}
Set $\alpha \geq s$.
For each $x\in \Omega$, let $(c_k(x))_{0\leq k\leq m-1}$ be defined as the coefficients of the projection of $T(x,\cdot)$ on $\mathcal{B}_{\alpha,m}$:
\begin{equation}
 (c_k(x)) = \argmin_{(c_k)} \left\|T(x,\cdot) - \sum_{k=0}^{m-1} c_k B_{\alpha,m}(\cdot - k/m) \right\|_2^2.
\end{equation}
Define $T_m$ by:
\begin{equation} \label{eq:splproposedine_form}
 T_m(x,y)=\sum_{k=0}^{m-1} c_k(x) B_{\alpha,m}(y - k/m).
\end{equation}
If $\alpha \geq s$, then, under assumptions \ref{assumption1} and \ref{assumption2},
\begin{equation}\label{eq:spline_rate}
\|H_m - H\|_{HS}  \leq C \sqrt{\kappa} m^{-s}.
\end{equation}
\end{corollary}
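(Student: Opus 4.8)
The plan is to follow exactly the same two-step template used to establish Corollary~\ref{thm:approx_Fourier} for the Fourier expansion, simply replacing the Fourier approximation estimate by the spline approximation result of Theorem~\ref{lem:splineapprox}. The structure is: first obtain a bound on $\|T_m(x,\cdot)-T(x,\cdot)\|_{L^2(\Omega)}$ that holds pointwise in $x$ and is uniform over $x\in\Omega$, then integrate the squared pointwise error over $x$, using Assumption~\ref{assumption2} to restrict the effective domain of integration to $[-\kappa/2,\kappa/2]$ and thereby produce the factor $\sqrt{\kappa}$.

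First I would fix $x\in\Omega$ and note that the coefficients $(c_k(x))_{0\leq k\leq m-1}$ defining $T_m$ in \eqref{eq:splproposedine_form} are, by their very definition in the corollary, the coefficients of the orthogonal projection of $T(x,\cdot)$ onto $\mathcal{B}_{\alpha,m}$. Hence $T_m(x,\cdot)$ coincides with the minimizer $f_m$ of Theorem~\ref{lem:splineapprox} applied to $f=T(x,\cdot)$. Since $\alpha\geq s$ and $T(x,\cdot)\in H^s(\Omega)$ by Assumption~\ref{assumption1}, that theorem controls $\|T_m(x,\cdot)-T(x,\cdot)\|_{L^2(\Omega)}$ by a constant times $m^{-s}\|T(x,\cdot)\|_{H^s(\Omega)}$. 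The uniform bound $\sup_{x\in\Omega}\|T(x,\cdot)\|_{H^s(\Omega)}\leq C$ of Assumption~\ref{assumption1} then absorbs the Sobolev norm into the constant, yielding
\begin{equation}
 \|T_m(x,\cdot)-T(x,\cdot)\|_{L^2(\Omega)} \leq C m^{-s} \qquad \textrm{for all } x\in\Omega,
\end{equation}
with a constant independent of $x$. (Here I read the decay rate furnished by Theorem~\ref{lem:splineapprox} as the $m^{-s}$ spline rate, so that, exactly as in the Fourier case, the factor $\sqrt{\kappa}$ in the final estimate is contributed solely by the support assumption below rather than being double-counted.)

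Next I would invoke Assumption~\ref{assumption2}: for $|x|>\kappa/2$ we have $T(x,\cdot)\equiv 0$, and the best spline approximation of the zero function is zero, so $T_m(x,\cdot)\equiv 0$ as well and the pointwise error vanishes there. Consequently, using the same identification of the Hilbert--Schmidt norm with the $L^2$ norm of the kernel as in the proof of Corollary~\ref{thm:approx_Fourier},
\begin{align}
 \|H_m-H\|_{HS}^2 &= \int_\Omega \|T_m(x,\cdot)-T(x,\cdot)\|_{L^2(\Omega)}^2\,dx \\
 &= \int_{-\kappa/2}^{\kappa/2} \|T_m(x,\cdot)-T(x,\cdot)\|_{L^2(\Omega)}^2\,dx \leq \kappa\, C^2 m^{-2s},
\end{align}
and taking square roots gives the claimed bound $\|H_m-H\|_{HS}\leq C\sqrt{\kappa}\,m^{-s}$.

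The argument is essentially routine once the Fourier corollary is in hand, and there is no genuine obstacle. The only points requiring a little care are: verifying that the corollary's definition of the $c_k(x)$ indeed makes $T_m(x,\cdot)$ the spline projection, so that Theorem~\ref{lem:splineapprox} applies verbatim and uniformly in $x$; checking that the support condition of Assumption~\ref{assumption2} transfers from $T(x,\cdot)$ to its projection $T_m(x,\cdot)$; and keeping track of the $\kappa$-bookkeeping so that the support length enters linearly inside the squared Hilbert--Schmidt norm and hence as $\sqrt{\kappa}$ in the final bound. The measurability and integrability of $x\mapsto\|T_m(x,\cdot)-T(x,\cdot)\|_{L^2(\Omega)}^2$ needed to apply Fubini's theorem are guaranteed by $T\in\mathcal{T}^s$.
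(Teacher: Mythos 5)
Your proof is correct and takes essentially the same approach as the paper, whose entire proof is the remark that the argument is ``similar to that of Corollary \ref{thm:approx_Fourier}'' --- i.e.\ exactly your two steps: the pointwise-in-$x$ spline estimate from Theorem \ref{lem:splineapprox} made uniform via Assumption \ref{assumption1}, then integration in $x$ restricted to $[-\kappa/2,\kappa/2]$ by Assumption \ref{assumption2}. Your parenthetical decision to read Theorem \ref{lem:splineapprox} as giving the plain $m^{-s}$ rate, so that the factor $\sqrt{\kappa}$ enters only once through the support assumption rather than being double-counted from the (spurious) $\sqrt{\kappa}$ in that theorem's statement, is the correct reading and matches the paper's intended bookkeeping.
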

\begin{proof}
 The proof is similar to that of corollary \eqref{thm:approx_Fourier}.
\end{proof}

\begin{corollary}\label{cor:complexitysplines}
Let $\epsilon>0$  and set $m = \lceil C \epsilon^{-1/s} \kappa^{1/2s} \rceil$. 
Under assumptions \ref{assumption1} and \ref{assumption2} $H_m$ satisfies $\| H - H_m \|_{HS} \leq \epsilon$ and products with $H_m$  and $H_m^*$ can be evaluated with no more than 
  \begin{equation}
   O\left( \left( s + \kappa^{1+1/2s} \epsilon^{-1/s} \right) n \log_2(\kappa n) \right)
  \end{equation}
  operations. For small $\epsilon$ and large $n$, the complexity behaves like
  \begin{equation}
   O\left( \kappa^{1+1/2s} n  \log_2 (\kappa n) \epsilon^{-1/s} \right).
  \end{equation}
\end{corollary}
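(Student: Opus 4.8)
The plan is to combine the approximation rate from Corollary \ref{cor:spline_rate} with the complexity bound from Lemma \ref{lem:complexity}, exactly paralleling the Fourier case but now exploiting the compact support of the B-spline atoms. The first statement, $\|H-H_m\|_{HS}\leq \epsilon$, is immediate: by \eqref{eq:spline_rate} we have $\|H_m-H\|_{HS}\leq C\sqrt{\kappa}\, m^{-s}$, and substituting $m=\lceil C\epsilon^{-1/s}\kappa^{1/2s}\rceil$ (with the constant $C$ matched appropriately) forces the right-hand side below $\epsilon$. So the real content is the complexity count, and this is where the argument genuinely differs from Fourier.

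The key point is to identify the support sizes $p_k$ and $q_k$ entering Lemma \ref{lem:complexity}. In the spline expansion \eqref{eq:splproposedine_form} the roles of $h_k$ and $w_k$ are played by the functions carrying the $y$-dependence (the translated B-splines $B_{\alpha,m}(\cdot - k/m)$, which localize the modulation) and the coefficient functions $c_k(x)$ (which become the filters convolved against $u$). First I would record the support of a single B-spline: from Definition \ref{def:b-spline}, $B_{\alpha,m}$ is an $(\alpha+1)$-fold convolution of $B_{0,m}=\one_{[-1/(2m),1/(2m)]}$, hence $\supp(B_{\alpha,m})$ has width $(\alpha+1)/m$. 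After discretization on $n$ points this gives each modulation window $\b{w}_k$ a support of roughly $p=n(\alpha+1)/m$ samples. The filters $\b{h}_k$ inherit their support from the $x$-extent of the TVIR, which by Assumption \ref{assumption2} is confined to $|x|\leq \kappa/2$, so $q=\kappa n$ samples.

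Next I would feed these into Lemma \ref{lem:complexity}. Each of the $m$ terms costs $O\big((p+q)\log_2(\min(p,q))\big)$, and summing over $k$ gives $O\big(m(p+q)\log_2(\min(p,q))\big)$. Writing $p+q = n(\alpha+1)/m + \kappa n$, the sum $m(p+q)$ becomes $n(\alpha+1) + m\kappa n$. Since $\alpha\geq s$ can be taken as $\alpha\asymp s$, the first piece contributes $O(sn)$ and the second, using $m\asymp \kappa^{1/2s}\epsilon^{-1/s}$, contributes $O(\kappa^{1+1/2s}\epsilon^{-1/s} n)$; the logarithmic factor is bounded by $\log_2(\kappa n)$. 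Collecting these yields the stated bound $O\big((s+\kappa^{1+1/2s}\epsilon^{-1/s})\,n\log_2(\kappa n)\big)$, and dropping the lower-order $s n$ term under the small-$\epsilon$, large-$n$ regime gives the simplified expression.

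The main obstacle, and the step deserving the most care, is the bookkeeping on the logarithmic factor and the $\min(p,q)$ inside it: one must verify that $\log_2(\min(p,q))\leq \log_2(\kappa n)$ uniformly (which holds since both $p$ and $q$ are at most of order $\kappa n$ up to constants), and that the $m(\alpha+1)n/m = (\alpha+1)n$ cancellation is legitimate — i.e. that the per-term window size $p$ genuinely scales like $n/m$ so that the modulation cost does not accumulate with $m$. A secondary subtlety is the adjoint $H_m^*$: since $H_m^*u=\sum_k \bar w_k\odot(\bar h_k\star u)$ is a product-convolution expansion with the same support profiles, its cost is identical, so the bound transfers without change. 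Everything else is the routine substitution of $m$ and simplification already performed in the Fourier corollary.
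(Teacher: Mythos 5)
Your proposal is correct and follows essentially the same route as the paper: identify the window support $p=\lceil(\alpha+1)\frac{n}{m}\rceil$ from the B-spline support $(\alpha+1)/m$, the filter support $q=\kappa n$ from Assumption \ref{assumption2}, and plug into Lemma \ref{lem:complexity} together with Corollary \ref{cor:spline_rate} and the choice $\alpha=s$; your treatment is in fact more careful than the paper's, which omits the adjoint and the $\min(p,q)$ bookkeeping. One tiny quibble: your stated justification that ``both $p$ and $q$ are at most of order $\kappa n$'' is not true for $p$ when $m$ is small, but the bound $\log_2(\min(p,q))\leq\log_2(q)=\log_2(\kappa n)$ holds anyway, so nothing breaks.
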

\begin{proof}
In this approximation, $m$ B-splines are used to cover $\Omega$. B-splines have a compact support of size $(\alpha+1)/m$. 
This property leads to windowing vector $\b{w}_k$ with support of size $p = \lceil(\alpha+1)\frac{n}{m}\rceil$. 
Furthermore the vectors $(\b{h}_k)$ have a support of size $q = \kappa n$. Combining these two results with lemma \ref{lem:complexity} and corollary \ref{cor:spline_rate} yields the result for the choice $\alpha = s$.
\end{proof}

The complexity of computing the vectors $\b{c}_k$ is $O(\kappa n^2\log(n))$ ($\kappa n$ projections with complexity $n\log(n)$, see e.g. \cite{unser1993b}).

As can be seen in corollary \eqref{cor:complexitysplines}, B-splines approximations are preferable over Fourier decompositions whenever the support size $\kappa$ is small. %It however requires solving a linear system of equations in order to find the coefficients $(c_k)$, which might be impractical for large $n$. This leads us to the wavelet system.

\subsection{Wavelet decompositions}\label{sec:Wavelets}
 
 \begin{lemma}[{\cite[Theorem 9.5]{mallat1999wavelet}}]\label{lem:approx_rate_Wavelets}
 Let $f\in H^s(\Omega)$ and $f_m$ denote its partial wavelet series:
 \begin{equation}
  f_m=\sum_{|\mu| \leq \lceil \log_2(m) \rceil } c_\mu \psi_{\mu},
 \end{equation}
 where $\psi$ is a Daubechies wavelet with $\alpha > s$ vanishing moments and $c_\mu=\langle \psi_{\mu},f\rangle$. 
 Then 
 \begin{equation}
  \|f_m-f\|_{L^2(\Omega)} \leq C m^{-s} |f|_{H^s(\Omega)}.
 \end{equation}
\end{lemma}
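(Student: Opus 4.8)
The plan is to mirror the argument behind the Fourier estimate of Lemma \ref{lem:approx_rate_Fourier}, replacing the Fourier characterization of $H^s(\Omega)$ (Lemma \ref{lem:Characterization_Fourier_Sobolev}) by its wavelet counterpart. Write $\mu=(j,l)$ with $|\mu|=j$, set $J=\lceil \log_2(m)\rceil$, and use that $(\psi_\mu)$ is an orthonormal basis of $L^2(\Omega)$, so that $f=\sum_\mu c_\mu \psi_\mu$ with $c_\mu=\langle \psi_\mu,f\rangle$. Since $f_m$ is exactly the truncation keeping the scales $|\mu|\leq J$, orthonormality gives
\begin{equation}
 \|f-f_m\|_{L^2(\Omega)}^2 = \sum_{|\mu|>J} |c_\mu|^2 = \sum_{j>J} \sum_{l} |c_{j,l}|^2.
\end{equation}

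The key ingredient is the norm equivalence
\begin{equation}
 |f|_{H^s(\Omega)}^2 \asymp \sum_{j\geq 0} 2^{2sj} \sum_{l} |c_{j,l}|^2,
\end{equation}
which is the wavelet analogue of Lemma \ref{lem:Characterization_Fourier_Sobolev} and holds precisely because $\psi$ has $\alpha>s$ vanishing moments and is smooth enough: the vanishing moments force the wavelet coefficients of a polynomial to vanish, which is what produces the diagonal weight $2^{2sj}$ and lets one compare the two sides. On the interval this equivalence is inherited from the real-line statement through the boundary wavelets of \cite{cohen1993wavelets}, which are constructed precisely to preserve such characterizations.

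Granting this equivalence, I would factor the diagonal weight out of the tail. For every $j>J$ one has $2^{-2sj}\leq 2^{-2sJ}$ since $s>0$, hence
\begin{equation}
 \sum_{j>J} \sum_{l} |c_{j,l}|^2 = \sum_{j>J} 2^{-2sj}\Big( 2^{2sj}\sum_{l} |c_{j,l}|^2 \Big) \leq 2^{-2sJ} \sum_{j\geq 0} 2^{2sj}\sum_{l} |c_{j,l}|^2 \leq C\, 2^{-2sJ}\, |f|_{H^s(\Omega)}^2.
\end{equation}
Because $J=\lceil \log_2(m)\rceil$ satisfies $2^{J}\geq m$, we get $2^{-2sJ}\leq m^{-2s}$, and taking square roots yields $\|f-f_m\|_{L^2(\Omega)}\leq C m^{-s}|f|_{H^s(\Omega)}$, as claimed.

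The only genuinely nontrivial step is the second one, the wavelet characterization of the Sobolev seminorm. It is classical (and is essentially the content of the cited \cite[Theorem 9.5]{mallat1999wavelet}), but it is where the hypothesis $\alpha>s$ enters in an essential way; a proof from scratch would require either a Littlewood--Paley / frame argument controlling $\sum_j 2^{2sj}\|\Delta_j f\|_{L^2(\Omega)}^2$ by the derivatives of $f$, or a direct estimate of $|c_{j,l}|$ obtained by expanding $f$ in a Taylor polynomial against the vanishing moments of $\psi_{j,l}$ on each dyadic cell. Everything else is the same elementary bookkeeping used in the Fourier case.
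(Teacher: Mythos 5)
The paper does not actually prove this lemma: it is imported verbatim from \cite[Theorem 9.5]{mallat1999wavelet}, so your proposal is measured against the standard argument behind that citation, which is exactly the route you take (orthogonal truncation plus a wavelet characterization of Sobolev regularity). Your outline is correct, but one step is stated wrongly and should be repaired. With the paper's convention $\psi_{0,l}=\phi(\cdot-l)$, the level $j=0$ consists of scaling coefficients, and the equivalence you invoke,
\begin{equation}
 |f|_{H^s(\Omega)}^2 \asymp \sum_{j\geq 0} 2^{2sj}\sum_{l} |c_{j,l}|^2,
\end{equation}
is false with the \emph{seminorm} on the left: for $f$ constant, all coefficients with $j\geq 1$ vanish by the vanishing moments, so the right-hand side equals $\|f\|_{L^2(\Omega)}^2>0$ while $|f|_{H^s(\Omega)}=0$. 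The seminorm version holds only over the genuine wavelet levels $j\geq 1$; with the sum over all $j\geq 0$ one must use the full norm $\|f\|_{H^s(\Omega)}$ and the weight $(1+2^{2sj})$. Your final chain extends the weighted sum to ``all $j\geq 0$,'' which is precisely where this bites. The repair is immediate, because the tail only involves $j>J\geq 0$, hence $j\geq 1$:
\begin{equation}
 \sum_{j>J}\sum_{l} |c_{j,l}|^2 \leq 2^{-2sJ}\sum_{j\geq 1} 2^{2sj}\sum_{l} |c_{j,l}|^2 \leq C\,2^{-2sJ}\,|f|_{H^s(\Omega)}^2,
\end{equation}
and $2^J\geq m$ then gives the claimed rate. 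A further simplification worth noting: you only need the one-sided (direct, Jackson-type) inequality $\sum_{j\geq 1}2^{2sj}\sum_l|c_{j,l}|^2\leq C|f|_{H^s(\Omega)}^2$, which follows from the $\alpha>s$ vanishing moments and decay of $\psi$ alone; the smoothness of $\psi$ you invoke is needed only for the reverse inequality, which your argument never uses. With these two adjustments your proof is complete and consistent with the cited result.
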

A direct consequence is the following corollary.
\begin{corollary}\label{cor:approx_wavelets}
 Let $\psi$ be a Daubechies wavelet with $\alpha = s+1$ vanishing moments. Define $T_m$ by:
\begin{equation} \label{eq:wavelet_form}
 T_m(x,y)=\sum_{|\mu|\leq \lceil \log_2(m) \rceil} c_\mu(x) \psi_{\mu}(y),
\end{equation}
where $c_\mu(x) = \langle \psi_{\mu},T(x,\cdot) \rangle$. Then, under assumptions \ref{assumption1} and \ref{assumption2}
\begin{equation}\label{eq:Wavelet_rate}
\|H_m - H\|_{HS}  \leq C \sqrt{\kappa} m^{-s}.
\end{equation}
\end{corollary}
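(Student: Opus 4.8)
The plan is to mirror exactly the argument used for the Fourier case in Corollary \ref{thm:approx_Fourier}, replacing the Fourier approximation estimate (Lemma \ref{lem:approx_rate_Fourier}) by its wavelet counterpart (Lemma \ref{lem:approx_rate_Wavelets}). The starting observation is that $H_m - H$ is an integral operator whose TVIR is $T_m - T$, so the same unit-Jacobian computation that gave $\|H\|_{HS}^2 = \int_\Omega\int_\Omega T(x,y)^2\,dx\,dy$ in the excerpt yields
\begin{equation}
\|H_m - H\|_{HS}^2 = \int_\Omega \int_\Omega \bigl(T_m(x,y) - T(x,y)\bigr)^2\,dx\,dy = \int_\Omega \|T_m(x,\cdot) - T(x,\cdot)\|_{L^2(\Omega)}^2\,dx.
\end{equation}

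First I would fix $x \in \Omega$ and recognize $T_m(x,\cdot) = \sum_{|\mu|\leq \lceil \log_2 m\rceil} c_\mu(x)\psi_\mu$ as precisely the partial wavelet series of the slice $T(x,\cdot)$, since $c_\mu(x) = \langle \psi_\mu, T(x,\cdot)\rangle$. Because $\psi$ is chosen with $\alpha = s+1 > s$ vanishing moments, Lemma \ref{lem:approx_rate_Wavelets} applies pointwise in $x$ and gives $\|T_m(x,\cdot) - T(x,\cdot)\|_{L^2(\Omega)} \leq C m^{-s}\,|T(x,\cdot)|_{H^s(\Omega)}$. Assumption \ref{assumption1} bounds $|T(x,\cdot)|_{H^s(\Omega)}$ uniformly in $x$, so this per-slice error is at most $C m^{-s}$ with a constant independent of $x$.

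Next I would exploit the support condition. By Assumption \ref{assumption2}, $T(x,\cdot) = 0$ whenever $|x| > \kappa/2$; since the wavelet coefficients are linear in $T(x,\cdot)$, all $c_\mu(x)$ vanish there and hence $T_m(x,\cdot) - T(x,\cdot) = 0$ for $|x| > \kappa/2$. Restricting the outer integral to $\{|x| \leq \kappa/2\}$, a set of measure $\kappa$, and inserting the uniform bound gives $\|H_m - H\|_{HS}^2 \leq \kappa\, C^2 m^{-2s}$; taking square roots produces \eqref{eq:Wavelet_rate}.

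There is essentially no hard obstacle: the argument is a verbatim adaptation of the Fourier and spline corollaries, and the only point deserving a line of care is the bookkeeping between the truncation level $\lceil \log_2 m\rceil$ and the effective order of the expansion. The number of retained indices $\mu$ with $|\mu|\leq \lceil \log_2 m\rceil$ is of order $m$ rather than exactly $m$, so the separable representation \eqref{eq:lowrank} has order $O(m)$; this affects only the constant $C$ and is harmless for the stated rate, exactly as it is already absorbed into the constant of Lemma \ref{lem:approx_rate_Wavelets}.
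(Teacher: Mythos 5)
Your proof is correct and follows the same route as the paper: the paper's own proof of this corollary is a one-line remark that the argument is identical to that of Corollary \ref{thm:approx_Fourier}, i.e.\ exactly your scheme of applying Lemma \ref{lem:approx_rate_Wavelets} slice-wise in $x$ (uniformly via Assumption \ref{assumption1}), killing the slices with $|x|>\kappa/2$ via Assumption \ref{assumption2}, and integrating over a set of measure $\kappa$. Your closing remark that the truncation at scale $\lceil \log_2 m\rceil$ retains $O(m)$ terms and only affects the constant is a fair point the paper leaves implicit.
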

\begin{proof}
 The proof is identical to that of corollary \eqref{thm:approx_Fourier}.
\end{proof}

\begin{proposition}
Let $\epsilon>0$  and set $m = \lceil C \epsilon^{-1/s} \kappa^{1/2s} \rceil$. 
Under assumptions \ref{assumption1} and \ref{assumption2}
$H_m$ satisfies $\| H - H_m \|_{HS} \leq \epsilon$ and products with $H_m$ and $H_m^*$ can be evaluated with no more than
\begin{equation}
   O\left( \left( sn \log_2 \left( \epsilon^{-1/s} \kappa^{1/2s} \right) + \kappa^{1 + 1/2s} n \epsilon^{-1/s} \right) \log_2(\kappa n) \right)
  \end{equation}
operations. 
For small $\epsilon$, the complexity behaves like
\begin{equation}
   O\left( \kappa^{1+1/2s} n \log_2(\kappa n)  \epsilon^{-1/s} \right).
\end{equation}
\end{proposition}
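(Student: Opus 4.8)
The plan is to split the statement into the approximation bound $\|H-H_m\|_{HS}\le\epsilon$ and the operation count, the former being immediate and the latter being the real content. For the approximation bound I would simply invoke Corollary \ref{cor:approx_wavelets}, which under Assumptions \ref{assumption1} and \ref{assumption2} gives $\|H_m-H\|_{HS}\le C\sqrt{\kappa}\,m^{-s}$. Substituting $m=\lceil C\epsilon^{-1/s}\kappa^{1/2s}\rceil$ gives $m^{-s}\le C^{-s}\epsilon\,\kappa^{-1/2}$, so that $\|H_m-H\|_{HS}\le\epsilon$ once the constants are absorbed into the choice of $C$. This step is routine and carries no difficulty.

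The core of the proof is the complexity estimate, obtained by applying Lemma \ref{lem:complexity} to the expansion \eqref{eq:wavelet_form}, in which the windowing functions are the wavelet atoms $w_\mu=\psi_\mu$ and the convolution functions are the coefficient profiles $h_\mu=c_\mu$. Two support facts drive the computation. First, a Daubechies wavelet with $\alpha=s+1$ vanishing moments satisfies $\supp(\psi)=[-\alpha+1,\alpha]$, so $\psi_{j,l}$ has support of length $\asymp\alpha/2^j$ and hence a discrete support of size $p_\mu\asymp\alpha n/2^{|\mu|}$. Second, since $c_\mu(x)=\langle\psi_\mu,T(x,\cdot)\rangle$, Assumption \ref{assumption2} forces $c_\mu(x)=0$ for $|x|>\kappa/2$ at \emph{every} scale, so the convolution supports are $q_\mu=\kappa n$ uniformly in $\mu$. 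The key structural observation is that these convolution supports do not shrink with scale, which is precisely what allows the forthcoming geometric sum to close.

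I would then organize the sum $\sum_\mu(p_\mu+q_\mu)\log_2(\min(p_\mu,q_\mu))$ of Lemma \ref{lem:complexity} by dyadic scale, using that there are $\asymp2^j$ atoms at level $j$ and that the expansion runs over $0\le j\le J$ with $J=\lceil\log_2 m\rceil$. Bounding $\log_2(\min(p_\mu,q_\mu))\le\log_2(q_\mu)=\log_2(\kappa n)$ pulls the logarithmic factor out of the sum, leaving $\log_2(\kappa n)\sum_{j=0}^J 2^j(p_j+q_j)=\log_2(\kappa n)\sum_{j=0}^J(\alpha n+2^j\kappa n)$ up to constants. The first piece is $O(\alpha n J)=O(sn\log_2 m)$ and the geometric piece is $O(\kappa n\,2^J)=O(\kappa n m)$. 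Substituting $m=\lceil C\epsilon^{-1/s}\kappa^{1/2s}\rceil$ turns $\log_2 m$ into $\log_2(\epsilon^{-1/s}\kappa^{1/2s})$ and $\kappa m$ into $\kappa^{1+1/2s}\epsilon^{-1/s}$, which produces the stated bound; the small-$\epsilon$ asymptotics follow since the geometric term then dominates. The same count applies to $H_m^*$, which is a product-convolution expansion built from the same atoms $w_\mu,h_\mu$ and hence has identical support sizes.

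The main obstacle I anticipate is not the algebra of the geometric sum but making the two support claims fully rigorous: verifying that the discretized supports of the boundary-adapted wavelets genuinely scale like $n/2^{|\mu|}$ uniformly across the basis, and confirming that Assumption \ref{assumption2} bounds the support of every coefficient profile $c_\mu$ by $\kappa$ independently of the scale. Once these two facts are secured, the crude estimate $\log_2(\min)\le\log_2(\kappa n)$ together with the geometric summation delivers the result with no remaining subtlety.
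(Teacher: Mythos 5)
Your proposal is correct and follows essentially the same route as the paper: the approximation bound via Corollary \ref{cor:approx_wavelets} with the prescribed $m$, then Lemma \ref{lem:complexity} applied scale by scale with $p_\mu \asymp (2s+1)n2^{-|\mu|}$ and $q_\mu = \kappa n$, the crude bound $\log_2(\min(p_\mu,q_\mu)) \leq \log_2(\kappa n)$, and the geometric sum $\sum_{j} 2^j((2s+1)n2^{-j}+\kappa n) = O(sn\log_2 m + \kappa n m)$. Your two explicit additions (checking that Assumption \ref{assumption2} bounds $\supp(c_\mu)$ uniformly in scale, and noting that $H_m^*$ has identical support sizes) are left implicit in the paper's proof but are exactly the right justifications.
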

\begin{proof}
 In \eqref{eq:wavelet_form}, the windowing vectors $\b{w}_k$ are wavelets $\b{\psi}_\mu$ of support of size $\min( (2s+1) n 2^{-|\mu|}, n )$. Therefore each convolution has to be performed on intervals of size $|\b{\psi}_\mu| + q + 1$. Since there are $2^{j}$ wavelets at scale $j$, the total number of operations is:
 \begin{align}
 & \sum_{\mu \, | \, |\mu| < \log_2(m)} ( |\b{\psi}_\mu| + q + 1 ) \log_2( \min( |\b{\psi}_\mu|, q+1) ) \\
 \leq &\sum_{\mu \, | \, |\mu| < \log_2(m)} ( (2s+1) n 2^{-|\mu|} + \kappa n ) \log_2( \kappa n ) \\
  = &\sum_{j=0}^{\log_2(m) - 1} 2^j \left( (2s+1)n 2^{-j} + \kappa n\right) \log_2( \kappa n)\\
  = &\sum_{j=0}^{\log_2(m) - 1} \left( (2s+1)n + 2^j \kappa n \right) \log_2( \kappa n) \\
  \leq &\left( (2s+1)n \log_2(m) + m \kappa n\right) \log_2( \kappa n ) \\
  = & \left( (2s+1)n \log_2(\epsilon^{-1/s} \kappa^{1/2s}) + \epsilon^{-1/s} \kappa^{1 + 1/2s} n\right) \log_2( \kappa n ).
 \end{align}
\end{proof}

\begin{figure}
    \centering
    \begin{subfigure}[b]{0.3\textwidth}
        \includegraphics[width=\textwidth]{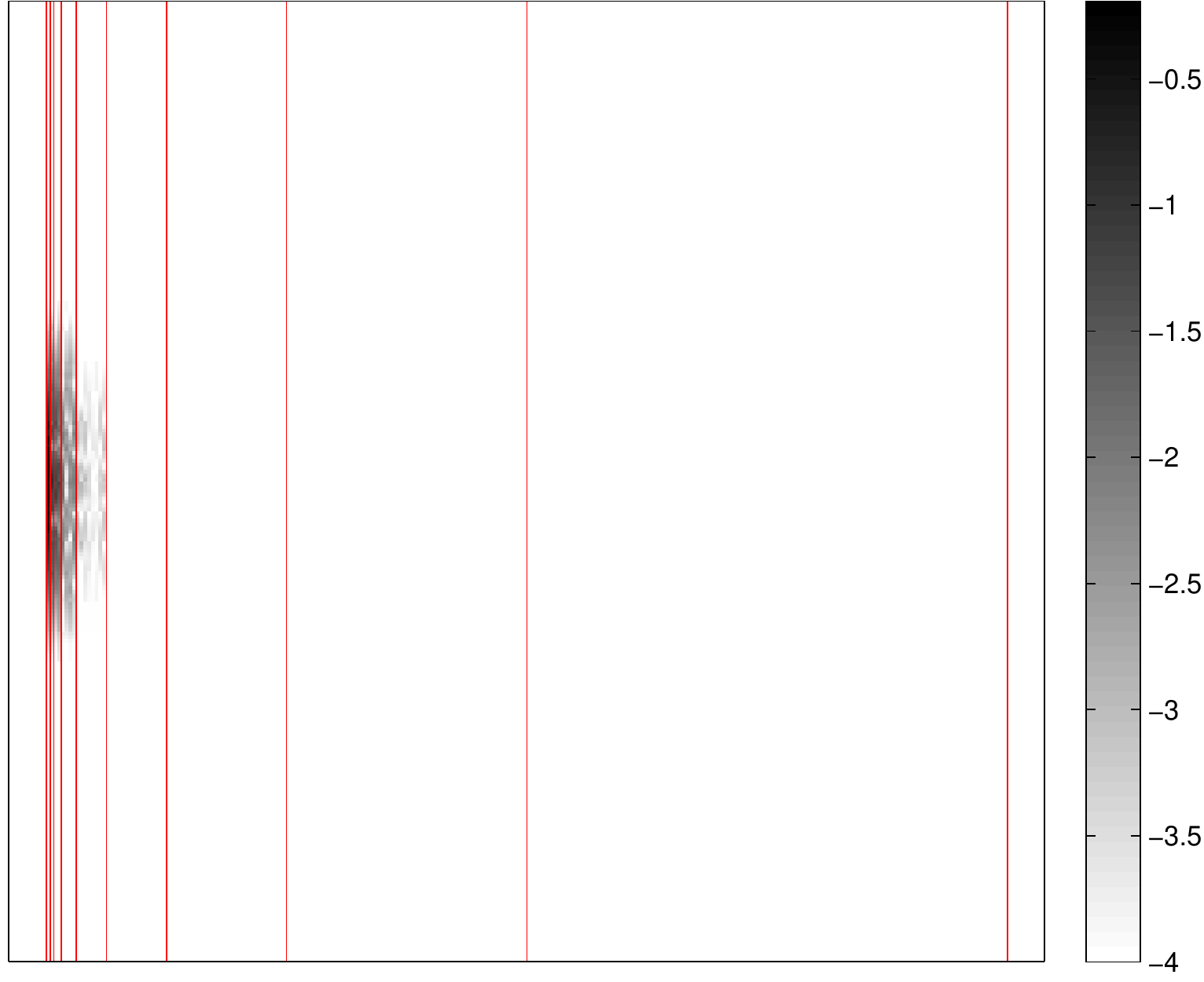}
        \caption{Kernel 1}
        \label{fig:k15}
    \end{subfigure}
    ~ %add desired spacing between images, e. g. ~, \quad, \qquad, \hfill etc. 
      %(or a blank line to force the subfigure onto a new line)
    \begin{subfigure}[b]{0.3\textwidth}
        \includegraphics[width=\textwidth]{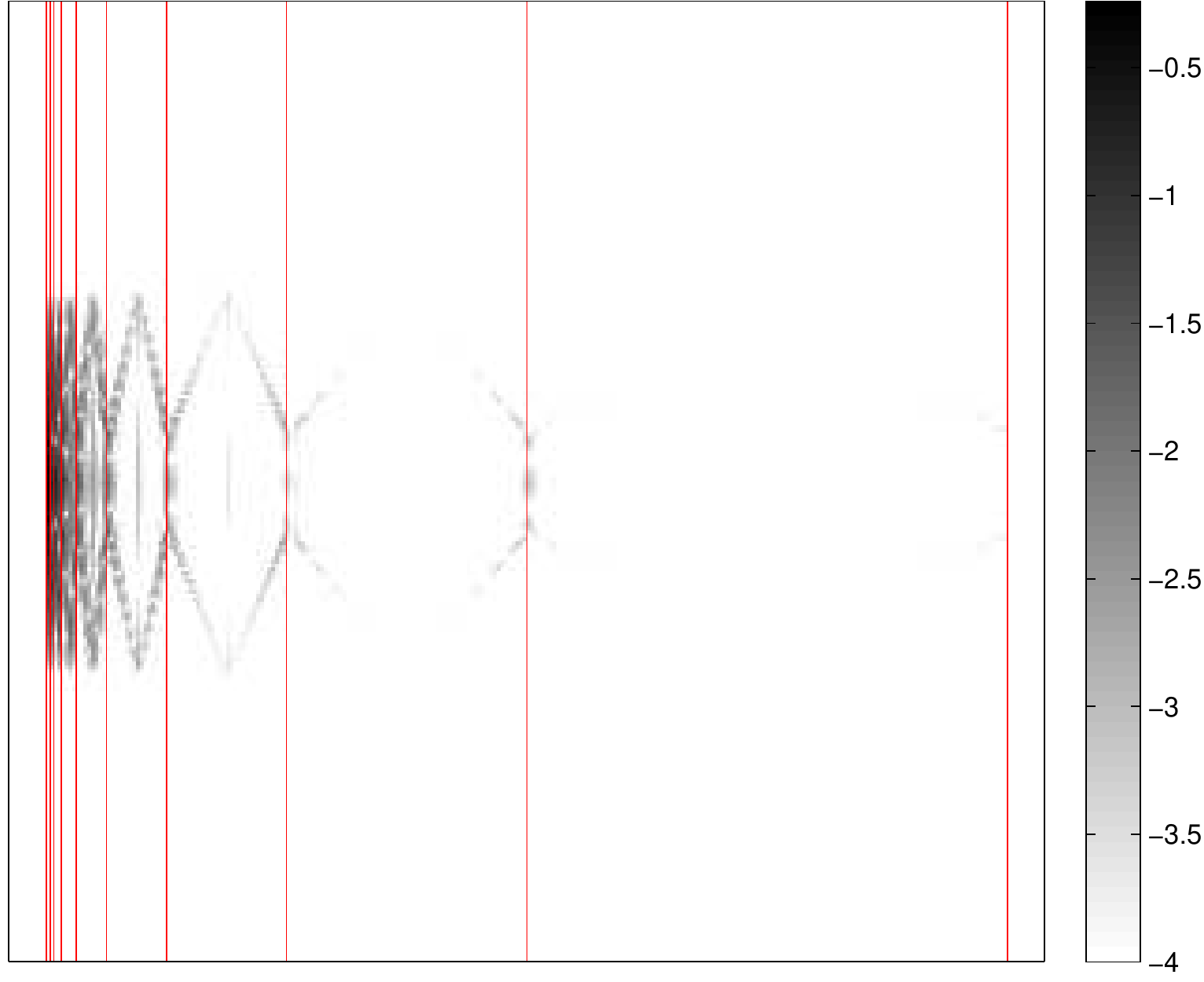}
        \caption{Kernel 2}
        \label{fig:k25}
    \end{subfigure}
    ~ %add desired spacing between images, e. g. ~, \quad, \qquad, \hfill etc. 
    %(or a blank line to force the subfigure onto a new line)
    \begin{subfigure}[b]{0.3\textwidth}
        \includegraphics[width=\textwidth]{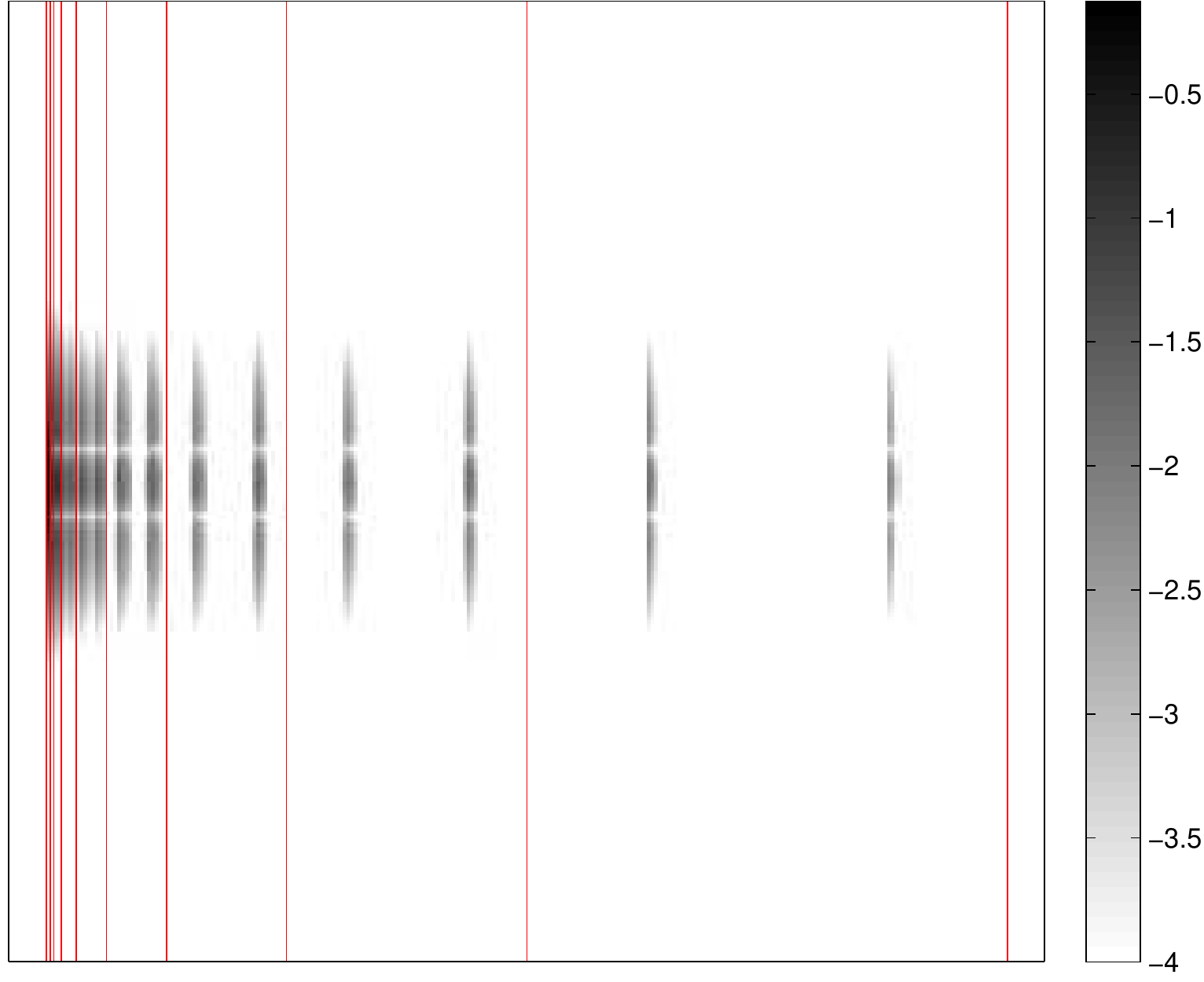}
        \caption{Kernel 3}
        \label{fig:k35}
    \end{subfigure}
    \caption{``Wavelet symbols'' of the operators given in examples \ref{example1}, \ref{example2} and \ref{example3} in $\log_{10}$ scale. The red bars indicate separations between scales. Notice that the wavelet coefficients in kernel 1 rapidly decay as scales increase. The decay is slower for kernels 2 and 3 which are less regular. The adaptivity of wavelets can be visualized in Kernel 3: some wavelet coefficients are non zero at large scales, but they are all concentrated around discontinuities. Therefore only a few number of couples $(c_\mu,\psi_\mu)$ will be necessary to encode the discontinuities. This was not the case with Fourier or B-spline atoms.} \label{fig:Wavelet_Symbol} 
\end{figure}

Computing the vectors $\b{c}_\mu$ costs $O(\kappa s n^2)$ operations ($\kappa n$ discrete wavelet transforms of size $n$). 
The storage cost of this wavelet representation is $O(m\kappa n)$ since one has to store $\kappa n$ coefficients for each of the $m$ functions $\b{h}_k$.

As can be seen from this analysis, wavelet and B-spline approximations roughly have the same complexity over the class $\mathcal{T}^s$. 
The first advantage of wavelets compared to B-splines is that the coefficients $c_\mu(x)$ have a simple analytic expression, while B-splines coefficients $c_k$ are found by solving a linear system. This is slightly more complicated to implement.

The second significant advantage of wavelets compared to B-splines with fixed knots is that they are known to characterize much more general function spaces than $H^s(\Omega)$. 
For instance, if all functions $T(x,\cdot)$ have a single discontinuity at a given $y\in \Omega$, only a few coefficients $c_\mu(x)$ will remain of large amplitude. Wavelets will be able to efficiently encode the discontinuity, while B-splines with fixed knots - which are not localized in nature - will fail to well approximate the TVIR. 
It is therefore possible to use wavelets in an adaptive way. This effect is visible on figure \ref{fig:k35}: despite discontinuities, only wavelets localized around the discontinuities yield large coefficients.
In the next section, we propose two other adaptive methods, in the sense that they are able to automatically adapt to the TVIR regularity.

\subsection{Interpolation VS approximation}

In all previous results, we constructed the functions $w_k$ and $h_k$ in \ref{eq:convolutionproduct} by projecting $T(x,\cdot)$ onto linear subspaces. This is only possible if the whole TVIR $T$ is available. 
In very large scale applications, this assumption is unrealistic, since the TVIR contains $n^2$ coefficients, which cannot even be stored. 
Instead of assuming a full knowledge of $T$, some authors (e.g. \cite{nagy1998restoring}) assume that the impulse responses $T(\cdot, y)$ are available only at a discrete set of points $y_i=i/m$ for $1\leq i\leq m$.

In that case, it is possible to \emph{interpolate} the impulse responses instead of approximating them. Given a linear subspace $E_m=\mathrm{span}(e_k, k\in \{1,\hdots, m\})$, where the atoms $e_k$ are assumed to be linearly independent, the functions $c_k(x)$ in \eqref{eq:decomposition_linear_subspace} are chosen by solving the set of linear systems:
\begin{equation}
 \sum_{k=1}^m c_k(x) e_k(y_i)  = T_m(x,y_i)  \quad \textrm{for} \quad 1\leq i \leq m.
\end{equation}
In the discrete setting, under assumption \ref{assumption2}, this amounts to solving $\lceil \kappa n \rceil$ linear systems of size $m\times m$. 
We do not discuss the rates of approximation for this interpolation technique since they are usually expressed in the $L^\infty$-norm under more stringent smoothness assumptions than $T\in \mathcal{T}^s$. 
We refer the interested reader to \cite{schoenberg1973cardinal,devore1993constructive,donoho1992interpolating} for results on spline and wavelet interpolants.

\subsection{On Meyer's operator representation}\label{sec:Beylkin}

Up to now, we only assumed a regularity of $T$ in the $y$ direction, meaning that the impulse responses vary smoothly in space. 
In many applications, the impulse responses themselves are smooth. 
In this section, we show that this additional regularity assumption can be used to further compress the operator. 
Finding a compact operator representation is a key to treat identification problems (e.g. blind deblurring in imaging).

Since $(\psi_{\lambda})_{\lambda\in \Lambda}$ is a Hilbert basis of $L^2(\Omega)$, the set of tensor product functions $(\psi_\lambda \otimes \psi_\mu)_{\lambda\in \Lambda,\mu \in \Lambda}$ is a Hilbert basis of $L^2(\Omega\times \Omega)$.
Therefore, any $T\in L^2(\Omega\times \Omega)$ can be expanded as:
\begin{equation}
 T(x,y) = \sum_{\lambda\in \Lambda} \sum_{\mu \in \Lambda} c_{\lambda,\mu} \psi_\lambda(x) \psi_\mu(y).
\end{equation}
The main idea of the construction in this section consists of keeping only the coefficients $c_{\lambda,\mu}$ of large amplitude.
A similar idea was proposed in the BCR paper \cite{beylkin1991fast}\footnote{This was also the basic idea in our recent paper \cite{escande2015sparse}.}, except that the kernel $K$ was expanded instead of the TVIR $T$. 
Decomposing $T$ was suggested by Beylkin at the end of \cite{beylkin1992representation} without a precise analysis.

In this section, we assume that $T\in H^{r,s}(\Omega\times \Omega)$, where 
\begin{equation}
 H^{r,s}(\Omega\times \Omega)=\{T:\Omega\times \Omega\to \R,\ \partial_{x}^{\alpha_1}\partial_{y}^{\alpha_2}T \in L^2(\Omega\times \Omega),\ \forall \alpha_1\in \{0,\hdots, r\}, \forall \alpha_2\in \{0,\hdots, s\} \}.
\end{equation}
This space arises naturally in applications, where the impulse response regularity $r$ might differ from the regularity $s$ of their variations. Notice that $H^{2s}(\Omega\times \Omega)\subset H^{s,s}(\Omega\times \Omega) \subset H^{s}(\Omega)$. 
\begin{theorem}\label{thm:Meyer}
 Assume that $T\in H^{r,s}(\Omega\times \Omega)$ and satisfies assumption \ref{assumption2}. Assume that $\psi$ has $\max(r,s)+1$ vanishing moments.
 Let $c_{\lambda,\mu}= \langle T, \psi_\lambda\otimes \psi_\mu\rangle$. Define
 \begin{equation}
   H_{m_1,m_2}  = \sum_{|\lambda|\leq \log_2(m_1)} \sum_{|\mu|\leq \log_2(m_2)} c_{\lambda,\mu} \psi_{\lambda}\otimes \psi_\mu.
 \end{equation}
 Let $m\in \N$, set $m_1 =\lceil m^{s/(r+s)}\rceil$, $m_2=\lceil m^{r/(r+s)}\rceil$ and $H_m=H_{m_1,m_2}$. Then
 \begin{equation}
  \|H-H_m\|_{HS} \leq C \sqrt{\kappa} m^{-\frac{rs}{r+s}}.\label{eq:boundmeyer}
 \end{equation}
\end{theorem}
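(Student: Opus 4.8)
The plan is to reduce the operator statement to a two–dimensional wavelet truncation estimate on the TVIR, and then to separate the role played by the smoothness $r$ in the $x$–variable from that of the smoothness $s$ in the $y$–variable. First I would invoke the isometry between the Hilbert–Schmidt norm of an integral operator and the $L^2$ norm of its kernel, together with the measure–preserving change of variables $K(x,y)=T(x-y,y)$ already used to establish $\|H\|_{HS}^2=\int_\Omega\int_\Omega T(x,y)^2\,dx\,dy$. This turns the goal into $\|H-H_m\|_{HS}=\|T-T_m\|_{L^2(\Omega\times\Omega)}$, where $T_m(x,y)=\sum_{|\lambda|\le\log_2 m_1}\sum_{|\mu|\le\log_2 m_2}c_{\lambda,\mu}\,\psi_\lambda(x)\psi_\mu(y)$ is the truncation of $T$ in the tensor basis $(\psi_\lambda\otimes\psi_\mu)$, which is a Hilbert basis of $L^2(\Omega\times\Omega)$.

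Since the tensor basis is orthonormal, Parseval converts the squared error into $\sum_{(\lambda,\mu)\in D}|c_{\lambda,\mu}|^2$ over the discarded set $D=\{|\lambda|>\log_2 m_1\}\cup\{|\mu|>\log_2 m_2\}$, which I would control by splitting $D$ into its two defining pieces and bounding each. For the first piece, a partial Parseval in the $y$–variable gives $\sum_{|\lambda|>\log_2 m_1}\sum_{\mu}|c_{\lambda,\mu}|^2=\int_\Omega\sum_{|\lambda|>\log_2 m_1}|\langle T(\cdot,y),\psi_\lambda\rangle|^2\,dy$, and the inner sum is exactly the squared $x$–wavelet tail of the slice $T(\cdot,y)\in H^r(\Omega)$, which Lemma~\ref{lem:approx_rate_Wavelets} bounds by $C^2 m_1^{-2r}|T(\cdot,y)|_{H^r(\Omega)}^2$. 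Symmetrically, the second piece is at most $C^2 m_2^{-2s}\int_\Omega|T(x,\cdot)|_{H^s(\Omega)}^2\,dx$. The assumption that $\psi$ has $\max(r,s)+1$ vanishing moments is precisely what allows Lemma~\ref{lem:approx_rate_Wavelets} to be applied in both directions at once, since $\max(r,s)+1>r$ and $\max(r,s)+1>s$.

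Next I would extract the factor $\sqrt{\kappa}$. The key observation is that, by Assumption~\ref{assumption2}, both remaining integrals are effectively taken over the support strip $\{|x|\le\kappa/2\}$, of $x$–measure $\kappa$: in the $y$–piece the slices $T(x,\cdot)$ vanish off the strip, and in the $x$–piece the integrand is $|\partial_x^r T(x,y)|^2$, which vanishes off the strip as well. A uniform regularity bound on $T$ over this strip — the analogue of the class $\mathcal{T}^s$ of Definition~\ref{def:class_op}, absorbed into $C$ — therefore yields $\int_\Omega|T(\cdot,y)|_{H^r(\Omega)}^2\,dy\le C\kappa$ and $\int_\Omega|T(x,\cdot)|_{H^s(\Omega)}^2\,dx\le C\kappa$, exactly the mechanism used in the proof of Corollary~\ref{thm:approx_Fourier}. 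Collecting the two pieces gives $\|T-T_m\|_{L^2}^2\le C\kappa\,(m_1^{-2r}+m_2^{-2s})$.

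Finally I would balance the two resolutions. With a budget of $m\asymp m_1 m_2$ retained coefficients, minimizing $m_1^{-2r}+m_2^{-2s}$ subject to $m_1 m_2=m$ forces $m_1^{r}=m_2^{s}$, i.e. $m_1=\lceil m^{s/(r+s)}\rceil$ and $m_2=\lceil m^{r/(r+s)}\rceil$, whence $m_1^{-2r}=m_2^{-2s}=m^{-2rs/(r+s)}$; substituting and taking square roots gives $\|H-H_m\|_{HS}\le C\sqrt{\kappa}\,m^{-rs/(r+s)}$. I expect the main obstacle to be the bookkeeping that produces $\sqrt{\kappa}$ \emph{symmetrically} in both variables: the $y$–tail lives on the strip almost by inspection, but for the $x$–tail one must phrase the uniform bound in terms of the $L^2$–density of $\partial_x^r T$ over the strip rather than a slice-wise $H^r$ norm, and one must also check that the boundary-corrected wavelets on $\Omega$ do not degrade the one-dimensional vanishing-moment estimate near the endpoints of the interval.
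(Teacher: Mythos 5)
Your proposal is correct and follows essentially the same route as the paper: both reduce $\|H-H_m\|_{HS}$ to the $L^2(\Omega\times\Omega)$ truncation error of $T$ in the tensor wavelet basis, apply the one-dimensional estimate of Lemma \ref{lem:approx_rate_Wavelets} once in each variable (the paper via the triangle inequality through the semi-truncation $T_{\infty,m_2}$ together with the observation that $c_\mu\in H^r(\Omega)$, you via an orthogonal Parseval split of the discarded index set, which produces the same two tail terms), extract $\sqrt{\kappa}$ from Assumption \ref{assumption2}, and balance $m_1^{-r}$ against $m_2^{-s}$ under $m_1m_2\asymp m$ to obtain the rate $m^{-rs/(r+s)}$. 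If anything, your partial-Parseval bookkeeping of the sum over $\mu$ is slightly more explicit than the paper's coefficientwise bound $|c_\mu|_{H^r(\Omega)}\leq\|\partial_x^r T\|_{L^2(\Omega\times\Omega)}$, but this is a presentational difference rather than a different method.
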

\begin{proof}
First notice that 
\begin{equation}
 T_{\infty, m_2} = \sum_{|\mu| \leq \lceil \log_2(m_2) \rceil} c_\mu \otimes \psi_\mu,
\end{equation}
where $c_\mu(x) = \langle T(x,\cdot) , \psi_\mu\rangle$.
From corollary \ref{cor:approx_wavelets}, we get:
\begin{equation}
 \|T_{\infty, m_2} - T\|_{L^2(\Omega\times \Omega)} \leq  C \sqrt{\kappa} m_2^{-s}.
\end{equation}
Now, notice that $c_\mu\in H^r(\Omega)$.
Indeed, for all $0 \leq k \leq r$, we get:
 \begin{align}
 & \int_{\Omega} (\partial_x^k c_{\mu}(x)) ^2 \,dx \\
 &= \int_{\Omega} \left(\partial_x^k \int_\Omega T(x,y) \psi_\mu(y)\, dy \right) ^2 \,dx \\
 &= \int_{\Omega} \left(\int_\Omega (\partial_x^k T)(x,y) \psi_\mu(y)\, dy \right) ^2 \,dx \\ 
 &\leq \int_{\Omega}  \| (\partial_x^k T)(x,\cdot)\|_{L^2(\Omega)}^2 \|\psi_\mu\|_{L^2(\Omega)}^2 \,dx \\ 
 &= \| (\partial_x^k T) \|_{L^2(\Omega\times \Omega)}<+\infty.
\end{align}
Therefore, we can use lemma \ref{lem:approx_rate_Wavelets} again to show:
\begin{equation}
 \|T_{\infty,m_2} - T_{m_1,m_2}\|_{L^2(\Omega\times \Omega)} \leq  C \sqrt{\kappa} m_1^{-r}.
\end{equation}
Finally, using the triangle inequality, we get:
\begin{equation}\label{eq:bbbbbb}
 \|T - T_{m_1,m_2}\|_{HS} \leq C\sqrt{\kappa}(m_1^{-r} + m_2^{-s}).
\end{equation}
By setting $m_1=m_2^{s/r}$, the two approximation errors in the right-hand side of \eqref{eq:bbbbbb} are balanced. This motivates the choice of $m_1$ and $m_2$ indicated in the theorem.
\end{proof}
The approximation result in inequality \eqref{eq:boundmeyer} is worst than the previous ones. 
For instance if $r=s$, then the bound becomes $O(\sqrt{\kappa} m^{-s/2})$ instead of $O(\sqrt{\kappa}m^{-s})$ in all previous theorems. 
The great advantage of this representation is the operator storage: until now, the whole set of vectors $(\b{c}_\mu)$ had to be stored ($O(\kappa n m)$ values), while now, only $m$ coefficients $c_{\lambda,\mu}$ are required. 
For instance, in the case $r=s$, for an equivalent precision, the storage cost of the new representation is $O(\kappa m^2)$ instead of $O(\kappa nm)$.

In addition, evaluating matrix-vector products can be achieved rapidly by using the following trick:
\begin{align}
 \b{H_m} \b{u} &= \sum_{|\lambda|\leq \log_2(m_1)} \sum_{|\mu|\leq \log_2(m_2)} c_{\lambda,\mu} \b{\psi}_{\lambda} \star (\b{\psi}_{\mu} \odot \b{u}) \\
 &= \sum_{|\mu|\leq \log_2(m_2)} \left(\sum_{|\lambda|\leq \log_2(m_1)}  c_{\lambda,\mu} \b{\psi}_{\lambda}\right) \star (\b{\psi}_{\mu} \odot \b{u}).
\end{align}
By letting $\tilde{\b{c}}_\mu = \sum_{|\lambda|\leq \log_2(m_1)}  c_{\lambda,\mu} \b{\psi}_{\lambda}$, we get
\begin{equation}\label{eq:trickMeyer}
 \b{H}_m \b{u} = \sum_{|\mu|\leq \log_2(m_2)} \tilde{\b{c}}_\mu \star (\b{\psi}_\mu \odot \b{u}).
\end{equation}
which can be can be computed in $O(m_2 \kappa n \log_2(\kappa n))$ operations. This remark leads to the following proposition.
\begin{proposition}
 Assume that $T\in H^{r,s}(\Omega \times \Omega)$ and that it satisfies assumption \ref{assumption2}. 
 Set $m=\left\lceil \left( \frac{\epsilon}{C\sqrt{\kappa}} \right)^{-(r+s)/rs}\right\rceil$. Then the operator $H_m$ defined in theorem \ref{thm:Meyer} satisfies $\|H-H_m\|_{HS}\leq \epsilon$ and the number of operations necessary to evaluate a product with $H_m$ or $H_m^*$ is bounded above by $O\left(\epsilon^{-1/s} \kappa^{\frac{2s+1}{2s}} n \log_2(n)\right)$.
\end{proposition}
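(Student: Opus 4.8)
The plan is to treat this proposition as a mere calibration of Theorem~\ref{thm:Meyer} followed by a direct substitution into the fast evaluation formula~\eqref{eq:trickMeyer}; no new analytic ingredient is required. First I would recall the approximation bound \eqref{eq:boundmeyer}, namely $\|H-H_m\|_{HS}\leq C\sqrt{\kappa}\,m^{-rs/(r+s)}$, and ask for which $m$ the right-hand side drops below $\epsilon$. Solving $C\sqrt{\kappa}\,m^{-rs/(r+s)}=\epsilon$ gives precisely $m=\left(\frac{\epsilon}{C\sqrt{\kappa}}\right)^{-(r+s)/rs}$, so taking the ceiling can only increase $m$. Since $t\mapsto t^{-rs/(r+s)}$ is decreasing, the ceiling choice guarantees $m^{-rs/(r+s)}\leq \frac{\epsilon}{C\sqrt{\kappa}}$, hence $\|H-H_m\|_{HS}\leq\epsilon$, which settles the accuracy claim.

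For the complexity I would first express the number of terms $m_2$ governing \eqref{eq:trickMeyer} in terms of $\epsilon$ and $\kappa$. Using $m_2=\lceil m^{r/(r+s)}\rceil$ together with the value of $m$ above, the exponents combine as $\frac{r}{r+s}\cdot\frac{r+s}{rs}=\frac{1}{s}$, so that $m_2=O\!\left(\left(\frac{\epsilon}{C\sqrt{\kappa}}\right)^{-1/s}\right)=O\!\left(\epsilon^{-1/s}\kappa^{1/(2s)}\right)$. This cancellation of the $(r+s)$ factors is the one bookkeeping step I would watch closely, since it is exactly what makes the final exponent independent of $r$.

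It then remains to substitute this value of $m_2$ into the cost $O(m_2\,\kappa n\log_2(\kappa n))$ of evaluating \eqref{eq:trickMeyer}, which I may assume from the discussion preceding the proposition. This yields $O\!\left(\epsilon^{-1/s}\kappa^{1/(2s)}\cdot\kappa n\log_2(\kappa n)\right)=O\!\left(\epsilon^{-1/s}\kappa^{(2s+1)/(2s)}\, n\log_2(\kappa n)\right)$, and since $\kappa\leq 1$ on the unit circle one has $\log_2(\kappa n)\leq\log_2(n)$, giving the stated bound $O\!\left(\epsilon^{-1/s}\kappa^{(2s+1)/(2s)}\, n\log_2(n)\right)$.

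Finally, for the adjoint $H_m^*$ I would observe that taking the adjoint of the convolution-product sum $\sum_k \b{h}_k\star(\b{w}_k\odot\,\cdot\,)$ merely turns each summand into a product-convolution term consisting of a pointwise multiplication by $\b{w}_k$ followed by a convolution with the reversed filter. This operation has the identical support structure, so Lemma~\ref{lem:complexity} applies verbatim and the per-term cost is unchanged; hence the same bound holds for $H_m^*$. The whole argument is essentially bookkeeping, and the only place demanding genuine attention is the exponent arithmetic producing $m_2=O(\epsilon^{-1/s}\kappa^{1/(2s)})$.
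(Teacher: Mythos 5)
Your proposal is correct and matches the paper's (implicit) argument exactly: the paper states the proposition as a direct consequence of the bound \eqref{eq:boundmeyer} in Theorem~\ref{thm:Meyer} and the $O(m_2 \kappa n \log_2(\kappa n))$ evaluation cost of \eqref{eq:trickMeyer}, which is precisely your calibration $m_2 = O(\epsilon^{-1/s}\kappa^{1/(2s)})$ via the exponent cancellation $\frac{r}{r+s}\cdot\frac{r+s}{rs}=\frac{1}{s}$. Your added remarks on the ceiling, on $\log_2(\kappa n)\leq \log_2(n)$ since $\kappa\leq 1$, and on the adjoint having the same support structure are all sound and fill in details the paper leaves unstated.
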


Notice that the complexity of matrix-vector products is unchanged compared to the wavelet or spline approaches with a much better compression ability. However, this method requires a preprocessing to compute $\tilde{\b{c}}_\mu$ with complexity $\epsilon^{-1/s} \kappa^{1/2s} n$.

\begin{figure}
    \centering
    \begin{subfigure}[b]{0.3\textwidth}
        \includegraphics[width=\textwidth]{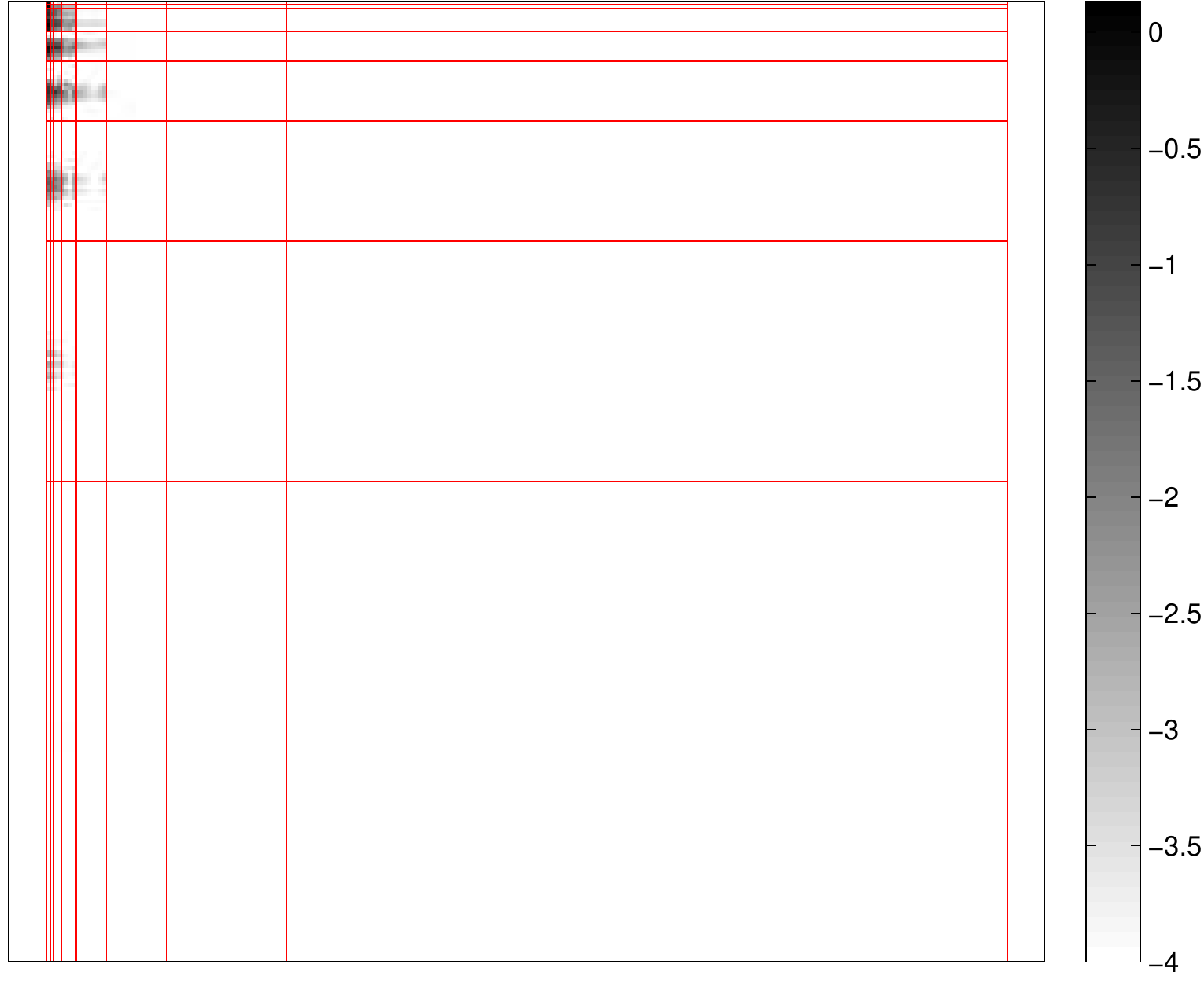}
        \caption{Kernel 1}
        \label{fig:k16}
    \end{subfigure}
    ~ %add desired spacing between images, e. g. ~, \quad, \qquad, \hfill etc. 
      %(or a blank line to force the subfigure onto a new line)
    \begin{subfigure}[b]{0.3\textwidth}
        \includegraphics[width=\textwidth]{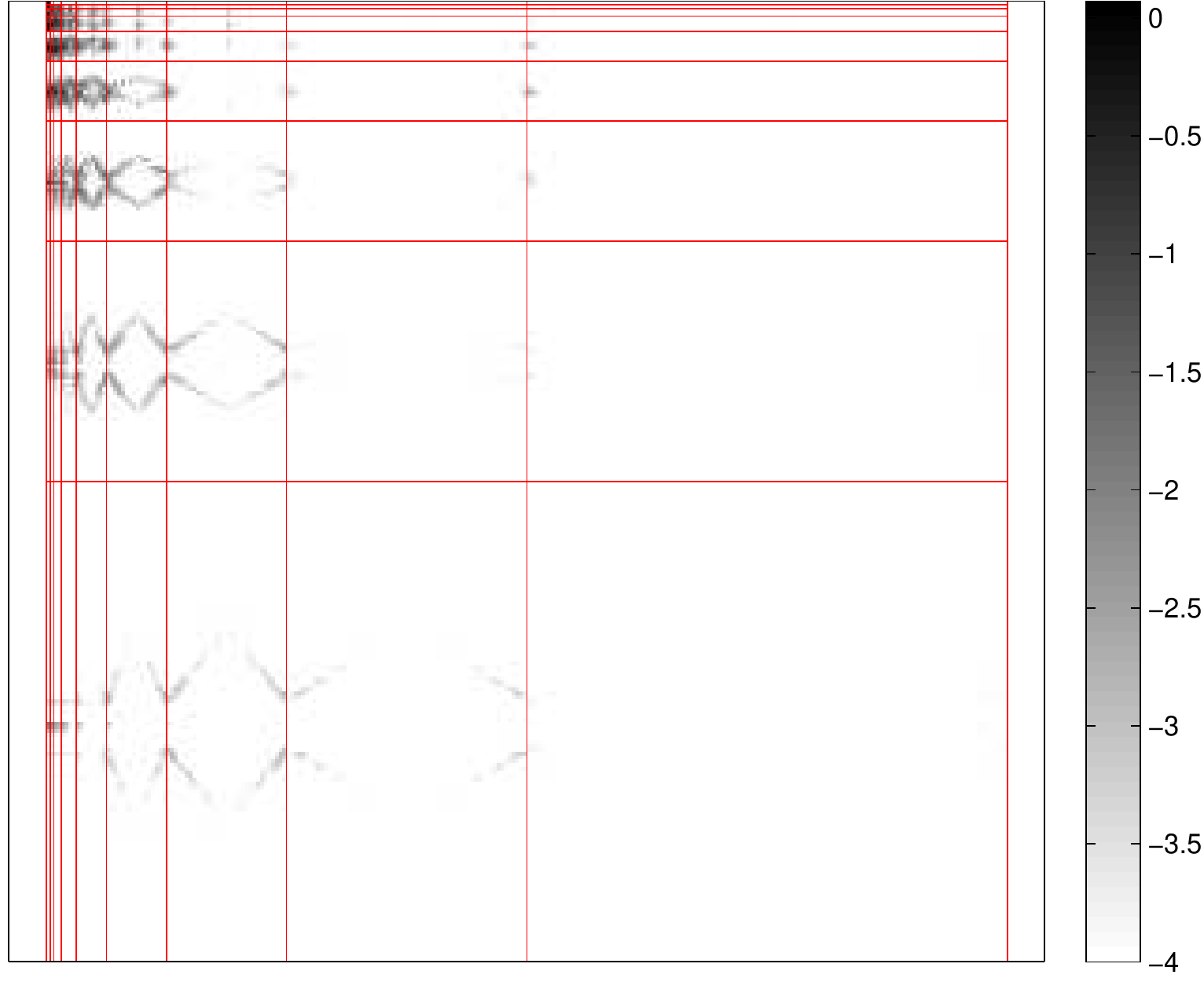}
        \caption{Kernel 2}
        \label{fig:k26}
    \end{subfigure}
    ~ %add desired spacing between images, e. g. ~, \quad, \qquad, \hfill etc. 
    %(or a blank line to force the subfigure onto a new line)
    \begin{subfigure}[b]{0.3\textwidth}
        \includegraphics[width=\textwidth]{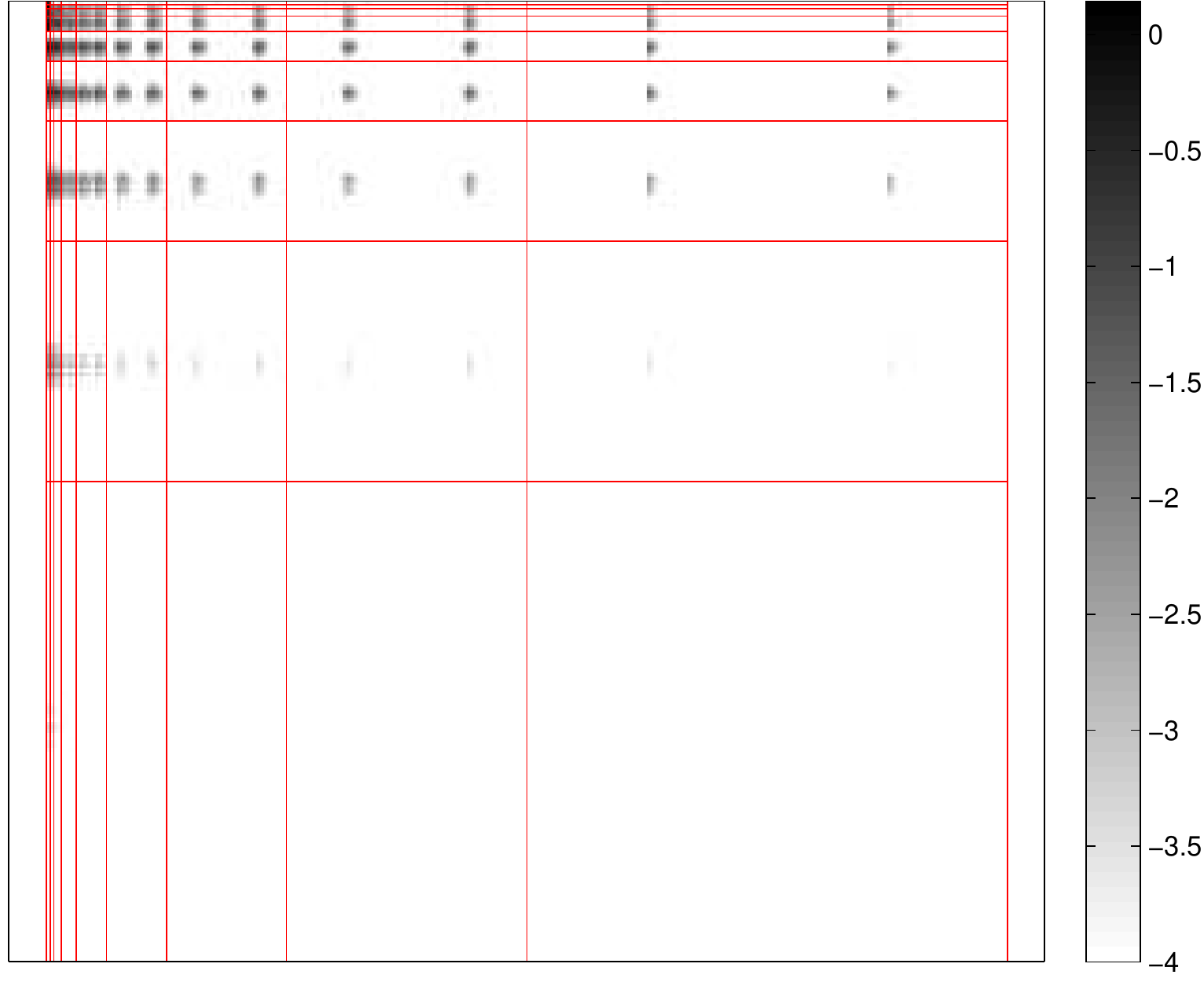}
        \caption{Kernel 3}
        \label{fig:k36}
    \end{subfigure}
    \caption{Meyer's representations of the operators in examples \ref{example1}, \ref{example2} and \ref{example3} in $\log_{10}$ scale.} \label{fig:Beylkin_Symbol} 
\end{figure}
\section{Adaptive decompositions}\label{sec:adaptive}

In the last section, all methods shared the same principle: project $T(x,\cdot)$ on a fixed basis for each $x\in \Omega$. 
Instead of fixing a basis, one can try to find a basis adapted to the operator at hand.
This idea was proposed in \cite{flicker2005anisoplanatic} and \cite{denis2015fast}. 

\subsection{Singular value decompositions}\label{sec:SVD}

The authors of \cite{flicker2005anisoplanatic} proposed to use a singular value decomposition (SVD) of the TVIR in order to construct the functions $h_k$ and $w_k$. 
In this section we first detail this idea and then analyze it from an approximation theoretic point of view.
Let $J:L^2(\Omega)\to L^2(\Omega)$ denote the linear integral operator with kernel $T\in \mathcal{T}^s$. 
First notice that $J$ is a Hilbert-Schmidt operator since $\|J\|_{HS}=\|H\|_{HS}$.
By lemma \ref{lem:schmidt} and since Hilbert-Schmidt operators are compact, there exists two Hilbert bases $(e_k)$ and $(f_k)$ of $L^2(\Omega)$ such that $J$ can be decomposed as
\begin{equation}
  J = \sum_{k\geq 1} \sigma_k \cdot e_k \otimes f_k,
\end{equation}
leading to
\begin{equation}\label{eq:decomposition_svd_T} 
 T(x,y)= \sum_{k=1}^{+\infty} \sigma_k f_k(x) e_k(y).
\end{equation}

The following result is a standard.
\begin{theorem}
For a given $m$, a set of functions $(h_k)_{1\leq k\leq m }$ and $(w_k)_{1\leq k\leq m }$ that minimizes $\|H_m - H\|_{HS}$ is given by:
\begin{equation} 
h_k = \sigma_k f_k \quad \textrm{and} \quad w_k = e_k.
\end{equation}
Moreover, if $T(x,\cdot)$ satisfies assumptions \ref{assumption1} and \ref{assumption2}, we get:
\begin{equation} \label{eq:svd_rate}
\|H_m - H\|_{HS} = O\left( \sqrt{\kappa} m^{-s} \right).
\end{equation}
\end{theorem}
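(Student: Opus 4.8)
The statement has two parts. The first is a classical optimality result: among all rank-$m$ approximations of a Hilbert-Schmidt operator, the truncated Schmidt (singular value) decomposition is optimal in Hilbert-Schmidt norm. The plan is to invoke the Schmidt decomposition of $J$ (lemma \ref{lem:schmidt}) directly. Writing $T(x,y)=\sum_{k\geq 1}\sigma_k f_k(x) e_k(y)$ as in \eqref{eq:decomposition_svd_T}, I identify the convolution-product form \eqref{eq:lowrank}, namely $T_m(x,y)=\sum_{k=1}^m h_k(x) w_k(y)$, with the choice $h_k=\sigma_k f_k$ and $w_k=e_k$. The optimality then reduces to the Eckart--Young--Mirsky theorem for compact operators: the best rank-$m$ approximation in Hilbert-Schmidt norm is obtained by keeping the $m$ largest singular values, and the residual satisfies
\begin{equation}
\|H_m-H\|_{HS}^2 = \sum_{k>m} \sigma_k^2.
\end{equation}
Since $\|H\|_{HS}=\|J\|_{HS}$ and any convolution-product expansion of order $m$ induces a rank-$m$ TVIR via \eqref{eq:lowrank}, minimizing $\|H_m-H\|_{HS}$ over all such expansions is exactly minimizing the Hilbert-Schmidt distance over rank-$m$ operators $J$, which the truncated SVD solves.

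For the rate \eqref{eq:svd_rate}, the key idea is that the SVD is \emph{optimal}, hence it must perform at least as well as any specific rank-$m$ expansion we already know how to bound. The cleanest route is a comparison argument: corollary \ref{cor:approx_wavelets} (or equally corollary \ref{cor:spline_rate}) exhibits an \emph{explicit} rank-$m$ convolution-product expansion $T_m^{\mathrm{wav}}$ satisfying $\|H_m^{\mathrm{wav}}-H\|_{HS}\leq C\sqrt{\kappa}m^{-s}$ under assumptions \ref{assumption1} and \ref{assumption2}. Because the SVD truncation minimizes the Hilbert-Schmidt error among all order-$m$ expansions, I obtain
\begin{equation}
\|H_m-H\|_{HS} \leq \|H_m^{\mathrm{wav}}-H\|_{HS} \leq C\sqrt{\kappa}m^{-s},
\end{equation}
which is precisely \eqref{eq:svd_rate}. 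This avoids any direct estimation of the decay of the singular values $\sigma_k$, which would otherwise require a separate spectral argument.

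The main obstacle, and the only point demanding genuine care, is the first reduction: verifying that minimizing $\|H_m-H\|_{HS}$ over convolution-product expansions is genuinely the \emph{same} problem as best rank-$m$ approximation of $J$. Two subtleties arise. First, the Hilbert-Schmidt norm of $H$ equals that of $J$ because $\|H\|_{HS}^2=\int_\Omega\int_\Omega T(x,y)^2\,dx\,dy=\|J\|_{HS}^2$ (the change of variables $K(x,y)=T(x-y,y)$ has unit Jacobian), so distances in the two representations coincide. Second, one must confirm that every order-$m$ expansion \eqref{eq:convolutionproduct} yields a TVIR of rank at most $m$ and, conversely, that any rank-$m$ kernel for $J$ arises from such an expansion; this is immediate from \eqref{eq:lowrank}. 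Once this correspondence is established, the optimality and the rate both follow as above, and the remaining work is purely bookkeeping.
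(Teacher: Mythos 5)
Your proposal is correct and takes essentially the same approach as the paper: optimality of the truncated Schmidt decomposition is treated as standard, and the rate \eqref{eq:svd_rate} is deduced by comparing the optimal rank-$m$ approximation against an explicit expansion already known to achieve $O(\sqrt{\kappa}\,m^{-s})$. The only cosmetic difference is that you benchmark against the wavelet expansion of corollary \ref{cor:approx_wavelets}, whereas the paper benchmarks against the Fourier bound \eqref{eq:Fourier_rate}; the comparison argument is identical in both cases.
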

\begin{proof}
 The proof of optimality \eqref{eq:svd_rate} is standard. 
%  We wish to solve:
%  \begin{equation}
%   \min_{T_m\in L^2(\Omega\times \Omega),\  \mathrm{rank}(T_m)=m} \|T_m-T\|_{L^2(\Omega\times \Omega)}.
%  \end{equation}
% 
%  First notice that the set of functions $(g_{i,j})$ defined by $g_{i,j}(x,y)=f_i(x)e_j(y)$ is an orthonormal basis for $L^2(\Omega\times \Omega)$.
%  Therefore $T$ and $T_m$ can be decomposed uniquely as
%  \begin{equation}
%   T(x,y) = \sum_{i,j} \sigma_{i,j} g_{i,j}, 
%  \end{equation}
%   \begin{equation}
%   T_m(x,y) = \sum_{i,j} \tilde\sigma_{i,j} g_{i,j}.
%  \end{equation}
% 
% By \eqref{eq:decomposition_svd_T}, we have $\sigma_{i,j}=0, \ \forall i\neq j$ and $\sigma_{i,i}=\sigma_i$. By Parseval identity:
%  \begin{align}
%   \|T_m-T\|_{L^2(\Omega\times \Omega)} &= \sum_{i,j} (\sigma_{i,j} - \tilde \sigma_{i,j})^2.
%  \end{align}
% The choice that minimizes this energy is $\tilde \sigma_{i,j} = 0, \ i\neq j$, $\tilde \sigma_{i,i} = \sigma_i, \ i\in \{1,\hdots, m\}$ and $\tilde \sigma_{i,i} = \sigma_i, \ i>m$.
% This corresponds to defining $T_m$ as:
%  \begin{equation} \label{eq:svd_form}
%  T_m(x,y)= \sum_{k=1}^{m} \sigma_k f_k(x) e_k(y). 
%  \end{equation}
Since $T_m$ is the best rank $m$ approximation of $T$, it is necessarily better than bound \eqref{eq:Fourier_rate}, yielding \eqref{eq:svd_rate}.
\end{proof}

\begin{theorem}
For all $\epsilon > 0$ and $m<n$, there exists an operator $H$ with TVIR satisfying \ref{assumption1} and \ref{assumption2} such that:
 \begin{equation}\label{eq:svd_lowerbound}
  \|H_m - H\|_{HS} \geq C \sqrt{\kappa} m^{-(s+\epsilon)}.
 \end{equation}\end{theorem}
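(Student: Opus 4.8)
The plan is to prove sharpness by exhibiting a single operator $H$ in the class that resists rank-$m$ approximation for every $m$. The starting point is the identity behind the previous theorem: because the truncated Schmidt decomposition $T_m$ is the \emph{optimal} rank-$m$ approximation of $T$, the error equals the tail of the singular values,
\[
\|H-H_m\|_{HS}^2 = \sum_{k>m}\sigma_k^2,
\]
where $(\sigma_k)$ are the singular values of $J$. So it suffices to build a $T\in\mathcal{T}^s$ obeying assumption \ref{assumption2} whose singular values decay as slowly as the smoothness permits. The key structural remark is that assumption \ref{assumption1} constrains $T$ only in the $y$ variable, leaving the $x$ dependence essentially free; this is precisely the slack that lets the singular values decay only marginally faster than $k^{-(s+1/2)}$.

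Concretely, I would fix two bounded orthonormal systems. In $y$, take the real trigonometric atoms $(e_k)$ on $\Omega$, for which lemma \ref{lem:Characterization_Fourier_Sobolev} gives $\|f\|_{H^s(\Omega)}^2 \asymp \sum_k|\hat f[k]|^2(1+|k|^2)^s$. In $x$, take a bounded orthonormal system $(\phi_k)$ of $L^2([-1/2,1/2])$ with $\|\phi_k\|_\infty\le M$, rescaled to be supported in $[-\kappa/2,\kappa/2]$ via $g_k(x)=\kappa^{-1/2}\phi_k(x/\kappa)$ (this uses $\kappa\le 1$, keeps the $g_k$ orthonormal in $L^2(\Omega)$, and yields $|g_k|\le M\kappa^{-1/2}$). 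Then set
\[
T(x,y)=\sqrt{\kappa}\sum_{k\ge 1} b_k\, g_k(x)\, e_k(y), \qquad b_k = k^{-(s+1/2+\epsilon)}.
\]
Since $(g_k)$ and $(e_k)$ are orthonormal, this display is already a Schmidt decomposition of $J$, so its singular values are exactly $\sigma_k=\sqrt{\kappa}\,b_k$.

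It remains to check the two assumptions and read off the rate. Assumption \ref{assumption2} is immediate from $\supp(g_k)\subseteq[-\kappa/2,\kappa/2]$. For assumption \ref{assumption1}, the Fourier coefficients of $T(x,\cdot)$ in $y$ are $\sqrt{\kappa}\,b_k g_k(x)$, so
\[
\|T(x,\cdot)\|_{H^s(\Omega)}^2 \asymp \kappa\sum_k b_k^2\,|g_k(x)|^2(1+|k|^2)^s \le M^2\sum_k b_k^2(1+|k|^2)^s \asymp \sum_k k^{-(1+2\epsilon)}<\infty,
\]
uniformly in $x$ and with a bound independent of $\kappa$. Finally, using the optimality identity,
\[
\|H-H_m\|_{HS}^2=\sum_{k>m}\sigma_k^2=\kappa\sum_{k>m}k^{-(2s+1+2\epsilon)}\ge c\,\kappa\,m^{-(2s+2\epsilon)},
\]
which gives the announced bound $\|H-H_m\|_{HS}\ge C\sqrt{\kappa}\,m^{-(s+\epsilon)}$ (the hypothesis $m<n$ only guarantees that terms beyond index $m$ survive after discretization).

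The main obstacle, and the reason for the $\epsilon$ loss, is the boundary case: taking $b_k\sim k^{-(s+1/2)}$ would make the singular tail behave like $m^{-2s}$ and hence realize the endpoint exponent $s$, but then $\sum_k b_k^2(1+|k|^2)^s\asymp\sum_k k^{-1}$ diverges logarithmically and $T(x,\cdot)$ \emph{just} fails to lie in $H^s$. The strictly positive $\epsilon$ is exactly what buys membership in $\mathcal{T}^s$ while keeping the decay as slow as possible, so no construction inside the class can reach the endpoint. The only other point requiring care is the bookkeeping of $\kappa$ powers: verifying that the $\kappa^{-1/2}$ coming from the rescaling of $g_k$ and the $\sqrt{\kappa}$ prefactor conspire to make the $H^s$ bound $\kappa$-free while leaving the genuine $\sqrt{\kappa}$ factor in the error.
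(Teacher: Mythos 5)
Your proof is correct and takes essentially the same route as the paper's: both exhibit a worst-case diagonal TVIR whose Schmidt decomposition pairs Fourier atoms in $y$ with an orthonormal system rescaled to $[-\kappa/2,\kappa/2]$ in $x$, choose singular values decaying like $k^{-(s+1/2+\epsilon)}$ so that membership in $\mathcal{T}^s$ holds with a $\kappa$-free bound, and then lower-bound the singular-value tail via optimality of the truncated Schmidt expansion. Your use of a generic bounded orthonormal system $(\phi_k)$ instead of the paper's rescaled Fourier atoms, and your exponent bookkeeping ($\epsilon$ where the paper uses $\epsilon/2$), are only cosmetic differences.
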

\begin{proof}
In order to prove, \eqref{eq:svd_lowerbound}, we construct a ``worst case'' TVIR $T$. 
We first begin by constructing a kernel $T$ with $\kappa=1$ to show a simple pathological TVIR. 
Define $T$ by:
 \begin{equation}\label{eq:svd_construction}
  T(x,y) = \sum_{k \in \Z} \sigma_k f_k(x) f_k(y),
 \end{equation}
 where $f_k(x) = \exp(2i\pi k x)$ is the $k$-th Fourier atom, $\sigma_0=0$ and $\sigma_k = \sigma_{-k} = \frac{1}{|k|^{s+1/2+\epsilon/2}}$ for $|k|\geq 1$. With this choice, 
  \begin{equation}
  T(x,y) = \sum_{|k| \leq N} 2 \sigma_k \cos(2\pi (x+y))
 \end{equation}
 is real for all $(x,y)$.
 We now prove that $T\in \mathcal{T}^s$. 
 The $k$-th Fourier coefficient of $T(x,\cdot)$ is given by $\sigma_k f_k(x)$ which is bounded by $\sigma_k$ for all $x$. 
 By lemma \ref{lem:Characterization_Fourier_Sobolev}, $T(x,\cdot)$ therefore belongs to $H^{s}(\Omega)$ for all $x\in \Omega$. 
 By construction, the spectrum of $T$ is $(|\sigma_k|)_{k\in \N}$, therefore for any rank $2m+1$ approximation of $T$, we get:
 \begin{align}\label{eq:list_of_estimates}
  \|T-T_{2m+1}\|_{HS}^2 & \geq \sum_{|k|\geq m+1} \frac{1}{|k|^{2s+1+\epsilon}} \\
  & \geq  \int_{m+1}^\infty \frac{2}{t^{2s+1+\epsilon}} \,dt \\
  & = \frac{1}{2s+\epsilon} \frac{2}{(m+1)^{2s+\epsilon}} \\
  &= O(m^{-2s-\epsilon}),
  \end{align}
	proving the result for $\kappa=1$. Notice that the kernel $K$ of the operator with TVIR $T$ only depends on $x$:
\begin{equation}
  K(x,y) = \sum_{|k| \leq N} 2 \sigma_k \cos(2\pi x).
\end{equation}
Therefore the worst case TVIR exhibited here is that of a \emph{rank 1} operator $H$. 
Obviously, it cannot be well approximated by product-convolution expansions. 

Let us now construct a TVIR satisfying assumption \ref{assumption2}. For this, we first construct an orthonormal basis $(\tilde f_k)_{k\in \Z}$ of $L^2([-\kappa/2,\kappa/2])$ defined by:
\begin{equation}
 \tilde f_k(x) = \left\{\begin{array}{ll}
			   \frac{1}{\sqrt{\kappa}} f_k\left(\frac{x}{\kappa}\right) & \textrm{if}\ |x|\leq \frac{\kappa}{2}, \\
			   0 & \textrm{otherwise}.                  
                 \end{array}\right.
\end{equation}
The worst case operator considered now is defined by:
\begin{equation}
 T(x,y) = \sum_{k\in \Z} \tilde \sigma_k \tilde f_k(x) f_k(y). 
\end{equation}
Its spectrum is $(|\tilde \sigma_k|)_{k\in \Z}$, and we get 
\begin{equation}
|\langle T(x,\cdot), f_k \rangle| = |\tilde \sigma_k \tilde f_k(x)| = \frac{1}{\kappa} |\tilde \sigma_k|.
\end{equation}
By lemma \ref{lem:approx_rate_Fourier}, if $\tilde \sigma_k = \frac{\kappa}{(1+|k|^2)^s|k|^{1+\epsilon}}$, then $\|T(x,\cdot)\|_{H^s(\Omega)}$ is uniformly bounded by a constant independent of $\kappa$. Moreover, by reproducing the reasoning in \eqref{eq:list_of_estimates}, we get:
\begin{equation}
 \|T-T_{2m+1}\|_{HS}^2 = O(\kappa m^{-2s-\epsilon}).
\end{equation}

% 
%  We construct a TVIR $T$ diagonal in the wavelet basis:
%  \begin{equation}
%   T(x,y)= \sum_{j,l}\sigma_{j} \psi_{j,l}(x)\psi_{j,l}(y).
%  \end{equation}
%  For all $x\in \Omega$:
%  \begin{align}
% |\langle T(x,\cdot), \psi_{j,l}\rangle| & = |\sigma_{j} \psi_{j,l}(x)| \\
% &\leq  \sigma_{j} 2^{j/2},
%  \end{align}
%  since $\|\psi_{j,l}\|_\infty=2^{j/2}$.
%  By lemma \ref{lem:Characterization_Wavelet_Sobolev}, we get
%  \begin{align}
%  \|T(x,\cdot)\|_{H^s(\Omega)}^2 & \simeq   \sum_{j,l}(1+2^{2js})|\sigma_{j} \psi_{j,l}(x)|^2 \\
%  & \leq \sum_{j,l}(1+2^{2js}) 2^{j}\sigma_{j}^2 \\
%  & = \sum_{j\geq 0} (1+2^{2js}) 2^{2j}\sigma_{j}^2 \\
%  & \leq 2\sum_{j\geq 0} 2^{2js}\sigma_{j}^2 2^{2j}.
%  \end{align}
%  Therefore, if $\sigma_j^2 = \frac{1}{2^{2j(s+1+\epsilon)}}$, with $\epsilon>0$, $T$ satisfies assumption \ref{assumption1}.
%  Moreover, for dyadic $m$, we get that:
%  \begin{align}
%   \|T-T_m\|_{HS}^2 & = \sum_{j> \log_2(m),l}  \sigma_{j,l}^2 \\
%   &=\sum_{j> \log_2(m)} 2^j \sigma_j^2 \\
%   &=\sum_{j> \log_2(m)} \frac{2^j}{2^{2j(s+1+\epsilon)}}\\ 
%   &=\sum_{j> \log_2(m)} 2^j(-2s -1 - \epsilon)\\
%   & \gtrsim 2^{\log_2(m)(-2s-1-\epsilon)} \\
%   &= m^{-2s-1-\epsilon}.
%  \end{align}
% 
%  Conclusion: wavelets do not allow creating worst case operators! Maybe B-splines do?
\end{proof}

Even if the SVD provides an optimal decomposition, there is no guarantee that functions $e_k$ are supported on an interval of small size. 
As an example, it suffices to consider the ``worst case'' TVIR given in equation \eqref{eq:svd_construction}.
Therefore, vectors $\b{w}_k$ are generically supported on intervals of size $p = n$. This yields the following proposition.

\begin{corollary}
Let $\epsilon>0$  and set $m = \lceil C \epsilon^{-1/s} \kappa^{1/2s}\rceil$. Then $H_m$ satisfies $\| H - H_m \|_{HS} \leq \epsilon$ and a product with $H_m$ and $H_m^*$ can be evaluated with no more than $O( \kappa^{1/2s} n \log n\epsilon^{-1/s})$ operations.
\end{corollary}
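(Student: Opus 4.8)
The plan is to treat the accuracy claim and the complexity claim separately, since the first is a one-line substitution into the optimal rate \eqref{eq:svd_rate} and the second only requires the support sizes of the functions $h_k$ and $w_k$ together with Lemma \ref{lem:complexity}; the whole argument parallels the complexity corollaries of Section \ref{sec:linearsubspace}, in particular the one in the Fourier subsection (Section \ref{sec:Fourier}). For the accuracy, the rate \eqref{eq:svd_rate} furnishes under assumptions \ref{assumption1} and \ref{assumption2} a constant $C$ with $\|H-H_m\|_{HS}\leq C\sqrt{\kappa}\,m^{-s}$. Since $m=\lceil C\epsilon^{-1/s}\kappa^{1/2s}\rceil\geq C(\sqrt{\kappa}/\epsilon)^{1/s}$, absorbing the constant into $C$ gives $C\sqrt{\kappa}\,m^{-s}\leq\epsilon$, hence $\|H-H_m\|_{HS}\leq\epsilon$.

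For the complexity I would read the support sizes off the Schmidt decomposition. Matching \eqref{eq:lowrank} with \eqref{eq:decomposition_svd_T} identifies $h_k=\sigma_k f_k$ and $w_k=e_k$. The left singular functions $f_k$ span the closure of the range of $J$, and assumption \ref{assumption2} forces every function in that range to vanish for $|x|>\kappa/2$; hence each $h_k$ is supported in $[-\kappa/2,\kappa/2]$, so $q_k=\lceil\kappa n\rceil$ after discretization. Crucially, and this is the point stressed just before the statement, the SVD provides no localization of the right singular functions $e_k$: generically $w_k=e_k$ is spread over all of $\Omega$, so $p_k=n$. Feeding $p_k=n$ and $q_k=\lceil\kappa n\rceil$ into Lemma \ref{lem:complexity}, and using $\kappa\leq 1$ so that $\min(p_k,q_k)=\lceil\kappa n\rceil$ and $p_k+q_k=O(n)$, each of the $m$ terms costs $O\!\left(n\log_2(\kappa n)\right)=O(n\log n)$.

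Summing over the $m$ terms gives the total cost $O(mn\log n)$, and substituting $m=\lceil C\epsilon^{-1/s}\kappa^{1/2s}\rceil$ yields the announced $O\!\left(\kappa^{1/2s}n\log n\,\epsilon^{-1/s}\right)$. The adjoint $H_m^*$ is a product-convolution expansion built from the same windows $w_k$ and the reflected filters, with the convolution and the pointwise product interchanged, so each term is again an $O(n\log n)$ FFT-based operation and the same global bound holds. There is no genuine obstacle here: the entire content is the support bookkeeping, and the decisive observation is that the SVD windows $w_k=e_k$ cannot be assumed localized. This is exactly why the optimal SVD expansion forgoes the sectioning speedup available to the wavelet and B-spline expansions and settles for the plain $O(mn\log n)$ matrix-vector cost.
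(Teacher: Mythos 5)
Your proposal is correct and follows essentially the same route as the paper: the accuracy part is the same substitution of $m$ into the rate \eqref{eq:svd_rate}, and the complexity part mirrors the paper's reasoning (stated in the remark preceding the corollary and in the analogous Fourier corollary) that the singular functions $w_k=e_k$ are generically unlocalized, so $p_k=n$ and Lemma \ref{lem:complexity} gives $O(mn\log n)$. Your additional observations---that assumption \ref{assumption2} localizes the $h_k=\sigma_k f_k$ via the range of $J$, and that the adjoint is a product-convolution expansion with the same per-term FFT cost---are correct refinements the paper leaves implicit.
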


Computing the first $m$ singular vectors in \eqref{eq:decomposition_svd_T} can be achieved in roughly $O(\kappa n^2\log(m))$ operations thanks to recent advances in randomized algorithms \cite{halko2011finding}. The storage cost for this approach is $O(mn)$ since the vectors $\b{e}_k$ have no reason to be compactly supported.

\subsection{The optimization approach in \cite{denis2015fast}}\label{sec:Denis}

In \cite{denis2015fast}, the authors propose to construct the windowing functions $w_k$ and the filters $h_k$ using constrained optimization procedures. For a fixed $m$, they propose solving:
\begin{equation}\label{eq:structuredlowrank}
 \min_{(h_k,w_k)_{1\leq k \leq m}} \left\|T - \sum_{k=1}^m h_k\otimes w_k \right\|_{HS}^2
\end{equation}
under an additional constraint that $\mathrm{supp}(w_k)\subset \omega_k$ with $\omega_k$ chosen so that $\cup_{k=1}^m \omega_k =\Omega$.
A decomposition of type \ref{eq:structuredlowrank} is known as structured low rank approximation \cite{chu2003structured}.
This problem is non convex and to the best of our knowledge, there currently exists no algorithm running in a reasonable time to find its global minimizer. It can however be solved approximately using alternating minimization like algorithms. 

Depending on the choice of the supports $\omega_k$, different convergence rates can be expected. However, by using the results for B-splines in section \ref{sec:Spline}, we obtain the following proposition.
\begin{proposition}
Set $\omega_k=[(k-1)/m,k/m+s/m]$ and let $(h_k,w_k)_{1\leq k \leq m}$ denote the global minimizer of \eqref{eq:structuredlowrank}. 
Define $T_m$ by $T_m(x,y) = \sum_{k=1}^m h_k(x) w_k(y)$. Then:
\begin{equation}
\|T - T_m\|_{HS}^2 \leq C \sqrt{\kappa}m^{-s}.
\end{equation}
Set $m=\lceil \kappa^{1/2s} C\epsilon^{-1/s}\rceil$, then $\|H_m-H\|_{HS}\leq \epsilon$ and the evaluation of a product with $H_m$ or $H_m^*$ is of order
\begin{equation}
 O(\kappa^{1 + 1/2s} n\log(n) \epsilon^{-1/s}).
\end{equation}
\end{proposition}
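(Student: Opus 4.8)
The plan is to exploit the optimality of the global minimizer. Since $(h_k,w_k)_{1\leq k\leq m}$ minimizes the squared Hilbert--Schmidt objective \eqref{eq:structuredlowrank} over all decompositions whose windows satisfy $\supp(w_k)\subset \omega_k$, the resulting error $\|T-T_m\|_{HS}$ is bounded above by the error of \emph{any} admissible competitor. The whole argument therefore reduces to exhibiting one admissible competitor that already attains the rate $\sqrt{\kappa}\,m^{-s}$, and the natural candidate is the B-spline approximation of Corollary \ref{cor:spline_rate}. I would not need to say anything about how the (nonconvex) minimization is actually carried out: only the value of the minimum matters, and it is at most the value at the competitor.

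First I would set $\alpha=s$ and build the competitor from B-splines of order $s$. The B-spline $B_{s,m}$ has support of length $(s+1)/m$, which is exactly the length of each window $\omega_k=[(k-1)/m,(k+s)/m]$; hence a suitable translate of $B_{s,m}$ fits inside each $\omega_k$, and there are exactly $m$ such translates, matching the $m$ windows. The span of these translates is a cardinal B-spline space (up to a global shift of the knots, which does not affect approximation power on the circle), so projecting each $T(x,\cdot)$ onto it gives window functions $w_k$ with $\supp(w_k)\subset\omega_k$ together with coefficient functions $h_k(x)$. By Corollary \ref{cor:spline_rate}, this competitor satisfies $\|T-T_m^{\mathrm{spl}}\|_{HS}\leq C\sqrt{\kappa}\,m^{-s}$ under Assumptions \ref{assumption1} and \ref{assumption2}. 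Since it is admissible, the global minimizer does at least as well, and using the identity $\|H_m-H\|_{HS}=\|T_m-T\|_{HS}$ from the proof of Corollary \ref{thm:approx_Fourier} yields the stated approximation bound.

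The complexity estimate and the calibration of $m$ are then a direct transcription of Corollary \ref{cor:complexitysplines}. The windows $\omega_k$ have length $(s+1)/m$, so after discretization each $\b{w}_k$ is supported on $p=\lceil (s+1)n/m\rceil$ points, while Assumption \ref{assumption2} forces each filter $\b{h}_k$ to have support $q=\kappa n$. Summing the per-term cost of Lemma \ref{lem:complexity} over the $m$ terms and substituting $m=\lceil \kappa^{1/2s}C\epsilon^{-1/s}\rceil$ reproduces the computation carried out for splines and gives $\|H-H_m\|_{HS}\leq \epsilon$ together with the announced $O(\kappa^{1+1/2s}\,n\log(n)\,\epsilon^{-1/s})$ operation count.

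The only genuinely delicate step is the admissibility verification in the second paragraph: one must check that the chosen B-spline translates really sit inside the prescribed windows (including at the boundary of $\Omega$, where periodicity of the construction must be invoked) and that the globally shifted knot sequence still enjoys the rate of Theorem \ref{lem:splineapprox}. Everything else is routine once a good admissible competitor is available, because the optimality of the structured low-rank minimizer does all the remaining work for free.
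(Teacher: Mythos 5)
Your proposal is correct and follows essentially the same route as the paper's own proof: exhibit the B-spline approximation of Corollary \ref{cor:spline_rate} as an admissible competitor (the cardinal B-splines of order $s$ are supported exactly on $\omega_k=[(k-1)/m,k/m+s/m]$), invoke optimality of the global minimizer of \eqref{eq:structuredlowrank} to transfer the rate $C\sqrt{\kappa}\,m^{-s}$, and then read off the complexity from Corollary \ref{cor:complexitysplines} via Lemma \ref{lem:complexity}. Your treatment is in fact more careful than the paper's two-line argument, since you also flag the admissibility details (periodic boundary handling and the shifted knot sequence) that the paper passes over silently.
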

\begin{proof}
First notice that cardinal B-Splines are also supported on $[(k-1)/m,k/m+s/m]$.  
Since the method in \cite{denis2015fast} provides the best choices for $(h_k,w_k)$, the distance $\|H_m-H\|_{HS}$ is necessarily lower than that obtained using B-splines in corollary \ref{cor:spline_rate}.
\end{proof}

Finally, let us mention that - owing to corollary \ref{cor:approx_wavelets} - it might be interesting to use the optimization approach \eqref{eq:structuredlowrank} with windows of varying sizes. %This method was not proposed until now in the literature.

\section{Summary and extensions}\label{sec:additional}

\subsection{A summary of all results}

Table  \ref{table:table} summarizes the results derived so far under assumptions \ref{assumption1} and \ref{assumption2}. In the particular case of Meyer's methods, we assume that $T\in H^{r,s}(\Omega\times \Omega)$ instead of assumption \ref{assumption1}.
As can be seen in this table, different methods should be used depending on the application. 
The best methods are:
\begin{itemize}
 \item Wavelets: they are adaptive, have a relatively low construction complexity, and matrix-vector products also have the best complexity. 
 \item Meyer: this method has a big advantage in terms of storage. The operator can be represented very compactly with this approach. It has a good potential for problems where the operator should be inferred (e.g. blind deblurring). It however requires stronger regularity assumptions.
 \item The SVD and the method proposed in \cite{denis2015fast} both share an optimal adaptivity. The representation however depends on the operator and it is more costly to evaluate it.
\end{itemize}

\begin{center}
\begin{table}[h]
    \begin{tabu}{|[1pt]c|[1pt] c |[1pt] c |[1pt] c |[1pt] c |[1pt] c|[1pt]}
    \tabucline[1pt]{-}
    \emph{Method} & \emph{Approximation} & \emph{Product} & \emph{Construction} & \emph{Storage} & \emph{Adaptivity} \\ \tabucline[1pt]{-}
    Fourier \ref{sec:Fourier} & $O\left(\kappa^{\frac{1}{2}} m^{-s}\right)$ & $O\left(\kappa^{\frac{1}{2s}} n\log(n) \epsilon^{-\frac{1}{s}}\right)$ & $O(\kappa n^2\log(n))$ & $O(m\kappa n)$ & \xmark \\ \hline
    B-Splines \ref{sec:Spline} & $O\left(\kappa^{\frac{1}{2}}m^{-s}\right)$ & $O\left(\kappa^{\frac{2s+1}{2s}} n\log(n) \epsilon^{-\frac{1}{s}}\right)$ & $O(\kappa n^2\log(n))$ & $O(m\kappa n)$ & \xmark \\ \hline
    Wavelets \ref{sec:Wavelets} & $O\left(\kappa^{\frac{1}{2}}m^{-s}\right)$ & $O\left(\kappa^{\frac{2s+1}{2s}} n\log(n) \epsilon^{-\frac{1}{s}}\right)$ & $O(\kappa s n^2)$ & $O(m\kappa n)$ & \cmark \\ \hline
    Meyer \ref{sec:Beylkin} &  $O\left(\kappa^{\frac{1}{2}} m^{-\frac{rs}{r+s}}\right)$ & $O\left(\kappa^{\frac{2s+1}{2s}} n \log(n)\epsilon^{-\frac{1}{s}}\right)$ & $O(s n^2)$ & $ O(m)$ & \cmark \\ \hline
    SVD \ref{sec:SVD} & $O\left(\kappa^{\frac{1}{2}}m^{-s}\right)$ & $O\left(\kappa^{\frac{1}{2s}} n\log(n) \epsilon^{-\frac{1}{s}}\right)$ & $O(\kappa n^2\log(m))$ & $O(m n)$ & \cmark  \\ \hline
    \cite{denis2015fast} \ref{sec:Denis} & $O\left(\kappa^{\frac{1}{2}}m^{-s}\right)$ & $O\left(\kappa^{\frac{2s+1}{2s}} n\log(n) \epsilon^{-\frac{1}{s}}\right)$ & High (iterative) & $O(m \kappa n)$ & \cmark  \\ \tabucline[1pt]{-}
    \end{tabu}
    \caption{Summary of the properties of different constructions. Approximation $\equiv$ approximation rates in terms of $m$. Product $\equiv$  matrix-vector product complexity to get an  $\epsilon$ approximation.  Construction $\equiv$  complexity of the construction of order $m$ representation. Storage $\equiv$ cost of storage of a given representation. Adaptivity $\equiv$  ability to automatically adapt to different input operators.}\label{table:table}
\end{table}
\end{center}

\subsection{Extensions to higher dimensions}\label{subsec:extensionhigherdim}

Most of the results provided in this paper are based on standard approximation results in 1D, such as lemmas \ref{lem:approx_rate_Fourier}, \ref{lem:approx_rate_Wavelets} and \ref{lem:splineapprox}. 
All these lemma can be extended to higher dimension and we refer the interested reader to \cite{devore1993constructive,mallat1999wavelet,devore1998nonlinear,pinkus2012n} for more details. 

We now assume that $\Omega=[0,1]^d$ and that the diameter of the impulse responses is bounded by $\kappa\in [0,1]$. 
Using the mentioned results, it is straightforward to show that the approximation rate of all methods now becomes 
\begin{equation}
\|H-H_m\|_{HS} = O(\kappa^{d/2} m^{-s/d}). 
\end{equation}

The space $\Omega$ can be discretized  on a finite dimensional space of size $n^d$. 
Similarly, all complexity results given in table \ref{table:table} are still valid by replacing $n$ by $n^d$, $\epsilon^{-1/s}$ by $\epsilon^{-d/s}$ and $\kappa$ by $\kappa^d$.

\subsection{Extensions to least regular spaces}

Until now, we assumed that the TVIR $T$ belongs to Hilbert spaces (see e.g. assumption \ref{assumption1}). 
This assumption was deliberately chosen easy to clarify the presentation. 
The results can most likely be extended to much more general spaces using nonlinear approximation theory results \cite{devore1998nonlinear}.

For instance, assume that $T\in BV(\Omega\times \Omega)$, the space of functions with bounded variations. 
Then, it is well known (see e.g. \cite{cohen2003harmonic}) that $T$ can be expressed compactly on a Hilbert basis of tensor-product wavelets. 
Therefore, the convolution-product expansion \ref{eq:convolutionproduct} could be used by using the trick proposed in \ref{eq:trickMeyer}.

Similarly, most of the kernels found in partial differential equations (e.g. Calder\`on-Zygmund operators) are singular at the origin. 
Once again, it is well known \cite{meyer1995wavelets} that wavelets are able to capture the singularities and the proposed methods can most most likely be applied to this setting too.

A precise setting useful for applications requires more work and we leave this issue open for future work.

\subsection{Controls in other norms}

In all the paper we only controlled the Hilbert-Schmidt norm $\|\cdot\|_{HS}$. 
This choice simplifies the analysis and also allows getting bounds for the spectral norm 
\begin{equation}
 \|H\|_{2\to 2} = \sup_{\|u\|_{L^2(\Omega)}\leq 1} \|Hu\|_{L^2(\Omega)},
\end{equation}
since 
$\|H\|_{2\to 2} \leq \|H\|_{HS}$. In applications, it often make sense to consider other operator norms defined by
\begin{equation}
 \|H\|_{X\to Y} = \sup_{\|u\|_{X}\leq 1} \|Hu\|_{Y},
\end{equation}
where $\|\cdot \|_{X}$ and $\|\cdot \|_{Y}$ are norms characterizing some function spaces. We showed in \cite{escande2015sparse} that this idea could highly improve practical approximation results. 

Unfortunately, it is not clear yet how to extend the extend the proposed results and algorithms to such a setting and we also leave this question open for the future.

\section{Conclusion}\label{sec:conclusion}

In this paper, we analyzed the approximation rates and numerical complexity of convolution-product expansions.
This approach was shown to be efficient whenever the time or space varying impulse response of the operator is well approximated by a low rank tensor.
We showed that this situation occurs under mild regularity assumptions, making the approach relevant for a large class of applications. 
We also proposed a few original implementations of this methods based on orthogonal wavelet decompositions and analyzed their respective advantages precisely.
Finally, we suggested a few ideas to further improve the practical efficiency of the method.

\bibliographystyle{alpha}
\bibliography{biblio}

\end{document}